\newtheorem{thm}{Theorem}[section]
\newtheorem{prop}[thm]{Proposition}
\newtheorem{lem}[thm]{Lemma}
\newtheorem{cor}[thm]{Corollary}
\newtheorem{question}[thm]{Question}
\theoremstyle{remark}
\newtheorem{rem}[thm]{Remark}
\newtheorem{notation}[thm]{Notation}
\theoremstyle{definition}
\newtheorem{defi}[thm]{Definition}
\newcommand{\Z}{\mathbb{Z}}
\newcommand{\Q}{\mathbb{Q}}
\newcommand{\R}{\mathbb{R}}
\newcommand{\N}{\mathbb{N}}
\DeclareMathOperator{\id}{id}
\DeclareMathOperator{\map}{map}
\def\epsilon{\varepsilon}
\DeclareMathOperator{\res}{res}
\DeclareMathOperator{\ubc}{UBC}
\def\UBC#1{%
  $#1$-$\ubc$}
\def\sv#1{\|#1\|}
\newcommand\norm{\bBigg@{0.8}}
\newcommand{\ifsv}[2][norm]{\csname #1l\endcsname\bracevert\!#2\!%
                            \csname #1r\endcsname\bracevert}
\newcommand{\stisv}[2][norm]{\indnorml[#1]{#2}{\Z}{\infty}}
\newcommand{\indnorml}[4][flex]{\csname #1l\endcsname\|#2%
                                 \csname #1r\endcsname\|_{#3}^{#4}\mathclose{}}
\def\actson{\curvearrowright}
\def\ucov#1{%
  \widetilde{#1}
}
\author{Daniel Fauser}
\author{Clara L\"oh}
\title[Variations on the theme of the UBC]%
      {Variations on the theme of\\ the uniform boundary condition}
\date{\today.\ \copyright{\ D.~Fauser, C.~L\"oh 2017}. 
    This work was supported by the CRC~1085 \emph{Higher Invariants} 
    (Universit\"at Regensburg, funded by the DFG).
    \\
     MSC~2010 classification: 55N10, 57N65}
\begin{document}

\begin{abstract}
  The uniform boundary condition in a normed chain complex asks for a
  uniform linear bound on fillings of null-homologous cycles. For the
  $\ell^1$-norm on the singular chain complex, Matsumoto and
  Morita established a characterisation of the uniform boundary
  condition in terms of bounded cohomology. In particular, spaces with
  amenable fundamental group satisfy the uniform boundary condition in
  every degree. We will give an alternative proof of statements of this
  type, using geometric F\o lner arguments on the chain level instead
  of passing to the dual cochain complex. These geometric methods have
  the advantage that they also lead to integral refinements. In particular, 
  we obtain applications in the context of integral foliated simplicial
  volume.
\end{abstract}

\maketitle


\section{Introduction}

The \emph{uniform boundary condition} in a normed chain complex asks
for a uniform linear bound on fillings of null-homologous cycles~\cite{mm} 
(Definition~\ref{def:ubc}). For the $\ell^1$-norm on the singular
chain complex, Matsumoto and Morita proved a characterisation of the
uniform boundary condition in terms of bounded cohomology of the dual
cochain complex~\cite{mm}.  In particular, spaces with amenable
fundamental group satisfy the uniform boundary condition in every
degree. Efficient fillings of this sort are used in glueing
formulae for simplicial volume~\cite{vbc,kuessner} and the calculation of
simplicial volume of smooth manifolds with non-trivial smooth
$S^1$-action~\cite{yano}.

In the present article, we will give an alternative proof of the
uniform boundary condition in the presence of amenability, using
geometric F\o lner arguments on the chain level instead of passing to
the dual cochain complex. These geometric methods lead to integral
refinements (Theorems~\ref{mainthm:stisv},
\ref{mainthm:parsvab},~\ref{mainthm:parsv}).  The prototypical result
reads as follows (which is a special case of the results by Matsumoto
and Morita):

\begin{prop}[UBC for the rational $\ell^1$-norm]\label{mainprop:Q}
  Let $M$ be an aspherical topological space with amenable
  fundamental group and let $n \in \N$. Then the chain
  complex~$C_*(M;\Q)$ satisfies~$n$-$\ubc$, i.e.: There is a
  constant~$K \in \R_{>0}$ such that: If $c \in C_n(M;\Q)$ is a
  null-homologous cycle, then there exists a filling chain~$b \in
  C_{n+1}(M;\Q)$ satisfying
  \[ \partial b = c
     \quad\text{and}\quad
     |b|_1 \leq K \cdot |c|_1.
  \]
\end{prop}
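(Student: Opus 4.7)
The plan is to build bounded fillings directly on the chain level by averaging a chain contraction on the universal cover $\ucov{M}$ over a F\o lner sequence for $\Gamma := \pi_1(M)$, replacing the Matsumoto--Morita passage to bounded cohomology. The two ingredients are asphericity of $M$ (which makes $\ucov{M}$ contractible, so $C_*(\ucov{M};\Q)$ admits a $\Q$-linear contracting homotopy) and amenability of $\Gamma$ (which provides F\o lner sets and hence a way to transfer the non-equivariant contraction to the quotient $C_*(M;\Q) \cong C_*(\ucov{M};\Q) \otimes_{\Q\Gamma} \Q$).

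Concretely, I would fix a chain contraction $h$ of $C_*(\ucov{M};\Q)$ in positive degrees, obtained by coning each singular simplex to a fixed basepoint $x_0 \in \ucov{M}$; such an $h$ can be chosen with $|h\sigma|_1 \leq 1$ on each singular simplex $\sigma$. Given a null-homologous cycle $c \in C_n(M;\Q)$, I lift $c$ simplex-wise to $\tilde c \in C_n(\ucov{M};\Q)$ with $|\tilde c|_1 = |c|_1$ and projection $p_\# \tilde c = c$; note that $\tilde c$ need not itself be a cycle in $\ucov{M}$. For a F\o lner set $F \subset \Gamma$, I set
\[
  b_F := \frac{1}{|F|} \sum_{\gamma \in F} p_\#\bigl( h(\gamma^{-1}\cdot \tilde c) \bigr) \in C_{n+1}(M;\Q),
\]
which satisfies $|b_F|_1 \leq |c|_1$ by the norm bound on $h$ and $p_\#$. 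A direct computation using $\partial h = \id - h\partial$ in positive degrees gives $\partial b_F = c - e_F$, where the error $e_F$ is the F\o lner average of $p_\#\bigl(h(\gamma^{-1}\partial\tilde c)\bigr)$ and involves $h$ applied to the ``phantom boundary'' $\partial\tilde c \in \ker p_\#$. The F\o lner property then bounds $|e_F|_1 \leq C\cdot\frac{|\partial F|}{|F|}\cdot|c|_1$ for some geometric constant $C$, because the commutator of $h$ with the $\Gamma$-action only survives on terms indexed by $\gamma$ near $\partial F$. An iterative correction (fill $e_F$ by the same averaging applied to $e_F$, and sum the resulting geometric series) finally produces an exact filling $b$ with $|b|_1 \leq K\cdot|c|_1$ for some $K$ independent of $c$.

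The main obstacle is the quantitative estimate $|e_F|_1 \leq C\cdot\frac{|\partial F|}{|F|}\cdot|c|_1$, which requires simplex-level control of the commutator $[\gamma,h]$ uniformly in the lift $\tilde c$. This is the geometric heart of the F\o lner argument and is precisely what the dual cochain approach of Matsumoto--Morita conceals: it replaces the abstract fact ``amenable groups have vanishing bounded cohomology'' by a concrete statement about how the basepoint-dependent coning interacts with the $\Gamma$-action on singular simplices.
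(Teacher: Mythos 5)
Your overall strategy (lift, cone off to a basepoint on the contractible universal cover, average over a F\o lner set, project) is the same F\o lner filling strategy the paper uses, and the preliminary estimates you set up are correct: $|b_F|_1 \leq |c|_1$ holds, and the lifting lemma (Lemma~\ref{lem:lifting}) applied to $\partial\tilde c$ (which is a lift of $0$) does give $|e_F|_1 \to 0$ along a F\o lner sequence. However, the final ``iterative correction / sum the geometric series'' step has a genuine gap: each pass of the averaging produces only an approximate filling $\partial b_{F} = c - e_{F}$, so a geometric-series correction yields an \emph{infinite} sum of chains. That sum converges in the $\ell^1$-completion $\overline{C_{n+1}}(M;\R)$, but not in $C_{n+1}(M;\Q)$, which is the complex where the proposition asserts the filling must live. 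There is no reason a tail of the series vanishes, and the F\o lner estimate only makes $|e_F|_1$ small, never zero; so the iteration does not terminate in finitely many steps and the limit is not a finite $\Q$-chain.

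The paper sidesteps this by keeping track of the (inefficient) filling $b$ of $c$ that exists by hypothesis. After lifting $b$ to $\tilde b$ and setting $\tilde a := \partial\tilde b - \tilde c$, one applies the coning contraction $h$ to $\partial(F_k\cdot\tilde b) = F_k\cdot\tilde a + F_k\cdot\tilde c$ rather than to $F_k\cdot\tilde c$ alone. The extra term $\frac{1}{|F_k|}\,p_\#\bigl(h(F_k\cdot\tilde a)\bigr)$ is exactly a controlled filling of your error $e_{F_k}$ (note $|F_k\cdot\tilde a|_1 \leq C\cdot|\partial_S F_k|$), so $b_k := \frac{1}{|F_k|}\,p_\#\bigl(h(\partial(F_k\cdot\tilde b))\bigr)$ satisfies $\partial b_k = \frac{1}{|F_k|}\,p_\#\bigl(F_k\cdot\partial\tilde b\bigr) = c$ \emph{exactly}, with $|b_k|_1 \leq C\cdot\frac{|\partial_S F_k|}{|F_k|} + |c|_1$. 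This gives a filling in $C_{n+1}(M;\Q)$ in one shot with constant $1+\epsilon$, no iteration and no completion. If you insist on your formulation, you must add the correction term coming from $\tilde a$ rather than trying to resum the errors. (A smaller point: the constant $C$ in your error bound comes from the lifting lemma applied to $\partial\tilde c$, so it depends on the specific chain $c$ and its choice of lift, not merely on $|c|_1$; this is harmless for the argument, since the F\o lner set may depend on $c$, but the estimate as you stated it is not what the lifting lemma gives.)
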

Here, we call a topological space \emph{aspherical} if it is path-connected,
locally path-connected and admits a contractible universal covering.

Our method of proof is related to the F\o lner filling argument
for the vanishing of integral foliated simplicial volume
of aspherical oriented closed connected manifolds with amenable
fundamental group~\cite[Section~6]{flps}. More precisely, the
proof consists of three steps:
\begin{enumerate}
\item Lifting appropriate chains to chains on the universal covering,
  taking translations by F\o lner sets, and estimating the size of
  these translates (lifting lemma).
\item Filling these chains more efficiently (filling lemma; in this
  step, asphericity is essential).
\item Projecting the chains to the original chain complex on
  the base space and dividing by the order of the F\o lner
  sets (in this step, special properties of the coefficients are needed).
\end{enumerate}
Taking the limit along a F\o lner sequence then gives the desired
estimates.

Our statements are weaker than the original result of Matsumoto and
Morita because the method requires asphericity (or at least a highly
connected universal covering space). However, our proof is more
constructive and yields refined information in integral contexts:

\begin{thm}[UBC for the stable integral $\ell^1$-norm]
  \label{mainthm:stisv}
  Let $M$ be an aspherical topological space with countable amenable
  residually finite fundamental group and let $n \in \N$. Then there
  is a constant~$K \in \R_{>0}$ such that: If $c \in C_n(M;\Z)$ is a
  null-homologous cycle, then there is a sequence~$(b_k)_{k \in \N}$
  of chains and a sequence~$(M_k)_{k \in \N}$ of covering
  spaces of~$M$ with the following properties:
  \begin{itemize}
  \item For each~$k \in \N$, there is a regular finite-sheeted
    covering~$p_k \colon M_k \longrightarrow M$ of~$M$, and the
    covering degrees~$d_k$ satisfy~$\lim_{k \rightarrow \infty} d_k =
    \infty$.
  \item For each~$k \in \N$ we have~$b_k \in C_{n+1}(M_k;\Z)$
    and
    \[ \partial b_k = c_k
       \quad\text{and}\quad
       |b_k|_1 \leq d_k \cdot K \cdot |c|_1,
    \]
    where $c_k \in C_n(M_k;\Z)$ denotes the full $p_k$-lift of~$c$ (see Equation~(\ref{eq:fulllift})
		on p.~\pageref{eq:fulllift}).   
  \end{itemize}
\end{thm}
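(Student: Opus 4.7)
The plan is to adapt the three-step lift-fill-project scheme used for Proposition~\ref{mainprop:Q} to integer coefficients, replacing the rational division by the cardinality of a F\o lner set with transfer to a finite cover of matching degree. Setting $\Gamma := \pi_1(M)$, residual finiteness supplies a descending sequence $(\Gamma_k)_{k \in \N}$ of finite-index normal subgroups of $\Gamma$ with $\bigcap_k \Gamma_k = \{e\}$; take $p_k \colon M_k := \widetilde M / \Gamma_k \to M$ with $d_k := [\Gamma : \Gamma_k]$, so $d_k \to \infty$ in the only interesting case (when $\Gamma$ is infinite; the finite case is trivial). The full $p_k$-lift $c_k := p_k^* c$ is automatically a null-homologous integer cycle in $M_k$ with $|c_k|_1 = d_k \cdot |c|_1$, since a filling of $c$ in $M$ pulls back to a filling of $c_k$ in $M_k$.

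The heart of the argument lives on the universal cover $\widetilde M$. Fix a set-theoretic lift $\hat c \in C_n(\widetilde M;\Z)$ of $c$ supported in $S$-translates of a fundamental domain of $\Gamma$, for some finite $S \subseteq \Gamma$, and a simplex-wise chain contraction of $\widetilde M$ (available because $\widetilde M$ is contractible) whose $\ell^1$-cost per simplex is bounded by a universal constant. Combining amenability of $\Gamma$ with residual finiteness (in the spirit of the Ornstein--Weiss tiling theorem for residually finite amenable groups), I would obtain, for each sufficiently large $k$, a set $F_k \subseteq \Gamma$ that is simultaneously a system of coset representatives for $\Gamma/\Gamma_k$ (so $|F_k| = d_k$) and $(S, \epsilon_k)$-F\o lner with $\epsilon_k \to 0$. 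Summing the chain contraction against the $F_k$-translates of $\hat c$ then produces a chain $\tilde b_k \in C_{n+1}(\widetilde M;\Z)$ with $\partial \tilde b_k = \sum_{g \in F_k} g \cdot \hat c$ up to an error concentrated near the $S$-boundary $\partial_S F_k$, and with $|\tilde b_k|_1 \leq K \cdot d_k \cdot |c|_1$ for a constant $K$ depending only on the chosen lift and chain contraction.

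Finally, push $\tilde b_k$ forward along $\widetilde M \to M_k$. Since $F_k$ is a transversal, the projection is injective on supports, so the $\ell^1$-norm is preserved and $\sum_{g \in F_k} g \cdot \hat c$ maps exactly to $c_k$; this yields $b_k \in C_{n+1}(M_k;\Z)$ with $|b_k|_1 \leq K \cdot d_k \cdot |c|_1$ and $\partial b_k = c_k + e_k$, where $e_k$ is a null-homologous integer cycle of $\ell^1$-norm $O(|\partial_S F_k| \cdot |c|_1) = o(d_k)$ by the F\o lner condition. I expect the main technical obstacle to be the removal of this F\o lner-boundary error while keeping $K$ uniform: one inducts on the degree, filling $e_k$ by the same construction at degree $n-1$ (where the F\o lner defect contributes an even smaller correction) and absorbing all lower-degree contributions into a single uniform constant. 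The second delicate input is the existence of F\o lner transversals for the tower $(\Gamma_k)$, where the interplay of amenability and residual finiteness is exploited beyond merely producing the tower itself.
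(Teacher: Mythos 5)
Your overall scaffolding (a tower $(\Gamma_k)$ of finite-index normal subgroups, Følner transversals for $\Gamma/\Gamma_k$ \`a la Weiss/Deninger--Schmidt, lifting to $\ucov M$, coning off via a chain contraction of the contractible universal cover, and pushing down to $M_k$) is the same as in the paper. The genuine gap is in your final step: your construction only yields $\partial b_k = c_k + e_k$ with an error cycle $e_k$ of norm $O(|\partial_S F_k|)$, whereas the theorem demands $\partial b_k = c_k$ exactly. Your proposed remedy --- ``filling $e_k$ by the same construction at degree $n-1$'' and absorbing constants --- does not work: $e_k$ is a degree-$n$ cycle in $C_n(M_k;\Z)$, so removing it requires a degree-$(n+1)$ integral filling \emph{in $M_k$ itself} with controlled norm, which is precisely an instance of the integral uniform boundary condition on $M_k$ (open in general, and the very issue this theorem is designed to sidestep); running ``the same construction'' again would only produce approximate fillings in further finite covers of $M_k$, not an exact filling in $M_k$, so the regress never terminates and no uniform constant $K$ comes out of it.

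The error term is avoidable, and this is exactly how the paper proceeds: use the hypothesis that $c$ is null-homologous from the start. Choose $b\in C_{n+1}(M;\Z)$ with $\partial b=c$, lift both to $\ucov c,\ucov b$, and set $\ucov a:=\partial\ucov b-\ucov c$, which is a lift of $0$; the lifting lemma gives $|F\cdot\ucov a|_1\le C\cdot|\partial_S F|$. Now fill the chain $\partial(F_k\cdot\ucov b)$ \emph{exactly} on the contractible space $\ucov M$ by the filling lemma, obtaining $\ucov b_k$ with $\partial\ucov b_k=\partial(F_k\cdot\ucov b)=F_k\cdot\ucov a+F_k\cdot\ucov c$ and $|\ucov b_k|_1\le C\cdot|\partial_S F_k|+|F_k|\cdot|c|_1$. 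Projecting along $\pi_k\colon\ucov M\to M_k$ then gives $\partial b_k=C_n(\pi_k;\Z)(F_k\cdot\partial\ucov b)$, and since $\partial\ucov b$ is a lift of $c$, $F_k$ is a set of coset representatives, and $\Gamma_k$ is normal, this projection is \emph{exactly} the full $p_k$-lift $c_k$ --- no boundary defect ever appears, and the Følner defect only enters the norm estimate, where it is harmless. In short: the object to be coned off must be $\partial(F_k\cdot\ucov b)$ rather than $F_k\cdot\hat c$; with that change your argument becomes the paper's proof.
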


Dynamical versions of F\o lner sequences lead to corresponding
results for the parametrised $\ell^1$-norm:

\begin{thm}[parametrised UBC for tori]
  \label{mainthm:parsvab}
  Let $d\in\N_{>0}$ and let $M := (S^1)^d$ be the $d$-torus, let $\Gamma := \pi_1(M) \cong \Z^d$, and 
  let~$\alpha=\Gamma \curvearrowright (X,\mu)$ be an (essentially) free standard $\Gamma$-space.
	Then
	\[ C_*(M;\alpha) = L^\infty(X;\Z)\underset{\Z\Gamma}{\otimes} C_{*}(\ucov M;\Z)
	\] satisfies the uniform boundary condition in every degree, i.e.: For every~$n \in \N$
        there is a constant~$K \in \R_{>0}$ such that: For every null-homologous cycle~$c
        \in C_n(M;\alpha)$ there exists a chain~$b \in C_{n+1}(M;\alpha)$ with
        \[ \partial b = c \quad \text{and} \quad |b|_1 \leq K \cdot |c|_1.
	\]
\end{thm}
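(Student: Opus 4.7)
The plan is to execute the lift/fill/project strategy of the introduction in the parametrised setting, replacing F\o lner sets in $\Gamma$ by \emph{dynamical} F\o lner (Rokhlin) sets in the probability space $(X,\mu)$. The crucial feature of the parametrised complex is that $L^\infty(X;\Z)$ contains characteristic functions of arbitrarily small measurable sets; these play the role of the scaling factor $1/|F|$ while remaining inside the integers, which is exactly what is needed for an integral refinement.

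Represent a null-homologous cycle as $c = \sum_{i=1}^{m} f_i \otimes \sigma_i \in C_n(M;\alpha)$ with $f_i \in L^\infty(X;\Z)$ and simplices $\sigma_i \colon \Delta^n \to \ucov M = \R^d$ whose first vertex lies in the fundamental domain $[0,1)^d$. Fix a F\o lner sequence $(F_k)_k$ in $\Z^d$ (for example $F_k = \{0,\dots,k-1\}^d$). Because $\Gamma = \Z^d$ is amenable and $\alpha$ is essentially free, the Ornstein--Weiss Rokhlin lemma yields measurable sets $A_k \subset X$ whose translates $\{\gamma A_k\}_{\gamma \in F_k}$ are essentially disjoint and cover $X$ up to a set of measure at most $\delta_k \to 0$. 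On $\R^d$ use the straight-line cone contraction $s_0 \colon C_*(\R^d;\Z) \to C_{*+1}(\R^d;\Z)$ from the origin, which satisfies $\partial s_0 + s_0 \partial = \id$ in positive degrees. Package these ingredients into
\[
  S_k \colon C_n(M;\alpha) \longrightarrow C_{n+1}(M;\alpha), \qquad
  S_k(f \otimes \sigma) := \sum_{\gamma \in F_k} (\chi_{\gamma A_k} \cdot f) \otimes \gamma \cdot s_0(\gamma^{-1} \cdot \sigma).
\]
Each $\chi_{\gamma A_k}$ lies in $L^\infty(X;\Z)$, so $S_k$ preserves integrality. Disjointness of the $\gamma A_k$ yields
\[
  |S_k(c)|_1 \;\leq\; \sum_{i=1}^m \int_{F_k \cdot A_k} |f_i| \, d\mu \;\leq\; |c|_1,
\]
and a direct computation, using $\partial c = 0$ and $\partial s_0 + s_0 \partial = \id$ (in degrees $n \geq 1$; the case $n = 0$ is trivial), gives $\partial S_k(c) = \chi_{F_k \cdot A_k} \cdot c$.

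Thus $S_k(c)$ is an \emph{exact} filling, of norm at most $|c|_1$, of the \emph{approximate} cycle $\chi_{F_k \cdot A_k} \cdot c$, which converges to $c$ in $L^1$-norm as $k \to \infty$. The main obstacle, and the place where the approach requires extra care compared to the classical F\o lner-in-$\Gamma$ argument, is to upgrade this to an exact filling of $c$ itself with a constant $K$ \emph{independent} of $c$. The natural route is an \emph{exact} dynamical tiling: for free $\Z^d$-actions on standard probability spaces, the Ornstein--Weiss quasi-tiling theorem produces finitely many Rokhlin shapes $(F^{(j)}, A^{(j)})_{j \leq N}$ with $\sum_j \chi_{F^{(j)} \cdot A^{(j)}} \equiv 1$; a straightforward multi-shape variant of $S_k$ then becomes an honest chain contraction, delivering $\partial S_k(c) = c$ with $|S_k(c)|_1 \leq K \cdot |c|_1$, where $K$ depends only on $n$ and on the chosen quasi-tiling. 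An alternative is to iterate $S_k$ on the geometrically shrinking residuals $(1 - \chi_{F_k \cdot A_k}) \cdot c$ and sum the resulting fillings. Both routes crucially invoke asphericity of $\ucov M = \R^d$ through the global cone operator $s_0$, while the uniformity of $K$ is delivered precisely because the (quasi-)tilings are chosen once and for all, independently of the cycle.
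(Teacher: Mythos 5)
Your dynamical-F\o lner intuition is the right one and it is indeed the backbone of the paper's argument, but the proposal has a gap at its central step: the claimed identity $\partial S_k(c) = \chi_{F_k\cdot A_k}\cdot c$ is false. A quick a priori reason: $\partial S_k(c)$ is automatically a cycle, whereas $\chi_{F_k\cdot A_k}\cdot c$ is generically \emph{not} a cycle, because pointwise multiplication by $\chi_{F_k\cdot A_k}$ does not commute with $\partial$ in $C_*(M;\alpha)$ (the boundary faces of $\sigma_i$ carry a nontrivial $\Gamma$-twist, and $\chi_{F_k\cdot A_k}$ is not $\Gamma$-invariant). More concretely, the cone $s_0$ is based at the origin and is therefore not $\Gamma$-equivariant, so ``$\partial c = 0$ in the quotient'' does not cancel the $s_0(\gamma^{-1}\cdot\partial\sigma_i)$ contributions. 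Writing $\widetilde c\in L^\infty(X;\Z)\otimes_\Z C_n(\ucov M;\Z)$ for your fixed canonical lift, $S_k(c)$ is the projection of $\chi_{A_k}\cdot s_0(F_k^{-1}\cdot\widetilde c)$, and one finds
\[
  \partial S_k(c) \;=\; \chi_{F_k\cdot A_k}\cdot c \;-\; \text{(projection of }\chi_{A_k}\cdot s_0(F_k^{-1}\cdot\partial\widetilde c)\text{)},
\]
where $\partial\widetilde c$ is a lift of zero in $L^\infty(X;\Z)\otimes_\Z C_{n-1}(\ucov M;\Z)$ that is generically \emph{not} zero. (Already for $M=S^1$, $c=f\otimes[0,1]$ with $f$ $\Gamma$-invariant, and $F_k=\{0,\dots,k-1\}$, the error is $\chi_{A_k}f\otimes\bigl([0,1]-[0,-(k-1)]\bigr)\neq 0$.) This error term is precisely what the lifting lemma (Lemma~\ref{lem:lifting}) is designed to control, but you never invoke it; and without it, the error is neither zero nor bounded in terms of $|c|_1$ alone.

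This gap is where the paper's proof takes a genuinely different route. The paper \emph{does} use the given filling $b$ with $\partial b=c$ (which your proposal ignores entirely): it lifts both $c$ and $b$, introduces $\widetilde a:=\partial\widetilde b-\widetilde c$, and bounds $|F_k^{-1}\cdot\widetilde a|_1$ by the lifting lemma. It then cuts $F_k^{-1}\cdot\widetilde b$ along the Rokhlin pieces $\chi_{\gamma\cdot A_k}$, selects a good piece $\gamma_0$ by a pigeonhole/box-principle argument, re-fills $\partial\widetilde b_{k,\gamma_0}$ efficiently with the filling lemma (Lemma~\ref{lem:filling}), and finally adds an explicit correction term $r_k$ (supported over the residual set $B_k=X\setminus F_k\cdot A_k$) so that the boundary equals $c$ exactly. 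Your proposed upgrades (exact Ornstein--Weiss quasi-tilings, or iterating on residuals) would at best address the residual-set issue, but not the boundary error coming from $s_0(F_k^{-1}\cdot\partial\widetilde c)$; moreover, $(1-\chi_{F_k\cdot A_k})\cdot c$ is not a cycle, so the iteration route does not close by itself either. In short: the ingredients you need but omit are the filling $b$, the lifting lemma controlling the lifted boundary, and an explicit correction term for the residual, which together form the actual proof.
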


\begin{thm}[mixed UBC for the parametrised $\ell^1$-norm]
  \label{mainthm:parsv}
  Let $M$ be an aspherical topological space with amenable
  fundamental group and let $n \in \N$.
	Let~$\alpha=\Gamma \curvearrowright (X,\mu)$ be an (essentially) free standard $\Gamma$-space.
	Then there is a constant~$K \in \R_{>0}$ such that: 
	For every cycle~$c\in C_n(M;\Z) \subset C_n(M;\alpha)$ 
	that is null-homologous in~$C_*(M;\Z)$ there exists 
	a parametrised chain~$b \in C_{n+1}(M;\alpha)$ with
	\[ \partial b = c \quad \text{and} \quad |b|_1 \leq K \cdot |c|_1.
	\]
\end{thm}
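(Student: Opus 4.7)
The plan is to follow the three-step F\o lner template outlined in the introduction (as used in the proof of Proposition~\ref{mainprop:Q}): lift $c$ to the universal covering and average over F\o lner sets, fill the averaged cycle there, and project back. The distinctive feature of the parametrised setting is that, instead of dividing the projected chain by $|F_k|$ (which produces rational coefficients), one tensors with the characteristic function of a measurable subset $Y_k \subset X$ of measure $\approx 1/|F_k|$, coming from a Rokhlin-type partition associated with the essentially free amenable action $\alpha$.

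First, I would fix a set-theoretic lift $\tilde c \in C_n(\ucov M;\Z)$ of $c$ and a left F\o lner sequence $(F_k)_{k \in \N}$ in $\Gamma$, and form the translated cycles
\[
  \tilde c_k \;:=\; \sum_{g \in F_k} g \cdot \tilde c \;\in\; C_n(\ucov M;\Z).
\]
Since $M$ is aspherical, $\ucov M$ is contractible, so $\tilde c_k$ is null-homologous. The filling lemma that drives the proof of Proposition~\ref{mainprop:Q} (the step that critically uses asphericity) then provides an integer filling $\tilde b_k \in C_{n+1}(\ucov M;\Z)$ with $\partial \tilde b_k = \tilde c_k$ and $|\tilde b_k|_1 \leq K' \cdot |F_k| \cdot |c|_1$ for a constant $K'$ independent of $k$.

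For the projection step, essential freeness of $\alpha$ combined with amenability of $\Gamma$ yields, via an Ornstein--Weiss-type Rokhlin lemma, a measurable set $Y_k \subset X$ with $\mu(Y_k) \leq 1/|F_k|$ whose $F_k$-translates are pairwise essentially disjoint and whose union exhausts $X$ up to a set of measure $\epsilon_k$ with $\epsilon_k \to 0$. Set
\[
  b_k \;:=\; \chi_{Y_k} \otimes \tilde b_k \;\in\; L^\infty(X;\Z) \otimes_{\Z\Gamma} C_{n+1}(\ucov M;\Z) \;=\; C_{n+1}(M;\alpha).
\]
Using the relation $\chi_{Y_k} \otimes g\tilde\sigma = (g^{-1}\chi_{Y_k}) \otimes \tilde\sigma$ in the tensor product over $\Z\Gamma$, a direct computation gives $\partial b_k = \chi_{F_k^{-1} Y_k} \otimes \tilde c$, which differs from $c = \chi_X \otimes \tilde c$ by a residue $e_k := c - \partial b_k = \chi_{X \setminus F_k^{-1} Y_k} \otimes \tilde c$ of $\ell^1$-norm at most $\epsilon_k \cdot |c|_1$, while $|b_k|_1 \leq \mu(Y_k) \cdot |\tilde b_k|_1 \leq K' \cdot |c|_1$.

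To pass from approximate to exact fillings I would iterate: the residue $e_k$ has again the shape $\chi_Z \otimes \tilde c$ for a small measurable $Z \subset X$, and the same Rokhlin procedure applied to $e_k$ (with a larger F\o lner set matched to $1/\mu(Z)$) yields a correction of norm at most $K' \cdot \epsilon_k \cdot |c|_1$ with residual norm at most $\epsilon_k^2 \cdot |c|_1$; the resulting geometric series sums to an exact filling $b \in C_{n+1}(M;\alpha)$ of $c$ with $|b|_1 \leq K' \cdot |c|_1 / (1-\epsilon_k) \leq K \cdot |c|_1$ for $K := 2K'$, provided $\epsilon_k \leq 1/2$. The central obstacle is the filling lemma in the second step: producing an integer filling of $\tilde c_k$ whose $\ell^1$-norm grows linearly in $|F_k| \cdot |c|_1$ is precisely where the asphericity of $M$ is essential. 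A secondary subtlety lies in the iteration, where the Rokhlin subsets at successive stages must be chosen in a compatible refining tower so that the summed correction remains a well-defined element of $L^\infty(X;\Z) \otimes_{\Z\Gamma} C_{n+1}(\ucov M;\Z)$ rather than merely an $L^\infty(X;\R)$-coefficient chain.
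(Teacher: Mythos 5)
There are two genuine gaps in the proposal, both rooted in a choice the paper makes differently and that you should compare with care.

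\emph{First gap: $\tilde c_k$ is not a cycle.} You fix a set-theoretic lift $\tilde c \in C_n(\ucov M;\Z)$ of $c$ and assert that $\tilde c_k := \sum_{g\in F_k} g\cdot\tilde c$ is null-homologous because $\ucov M$ is contractible. But a set-theoretic lift of a cycle is not a cycle: $\partial\tilde c$ is only a lift of $0$, not $0$ itself, so $\partial\tilde c_k = F_k\cdot\partial\tilde c$ does not vanish in general, and the filling lemma cannot produce a $\tilde b_k$ with $\partial\tilde b_k = \tilde c_k$. The paper avoids this by starting from an (inefficient) filling $b$ of $c$ with lift $\tilde b$, setting $\tilde a := \partial\tilde b - \tilde c$ (a lift of $0$), and filling $\partial(F_k\cdot\tilde b) = F_k\cdot\partial\tilde b = F_k\cdot\tilde a + F_k\cdot\tilde c$, which \emph{is} a boundary. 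The norm of this cycle is controlled by the lifting lemma (Lemma~\ref{lem:lifting}), $|F_k\cdot\tilde a|_1 \le C\cdot|\partial_S F_k|$, a step entirely absent from your proposal but essential for the estimate. This also matters in the quotient step: working with $\partial\tilde b$ rather than $\tilde c$ is what makes the projected boundary equal $1\otimes\partial\tilde b = c$ on the nose, since $1\in L^\infty(X;\Z)$ is $\Gamma$-invariant while $\chi_{Y_k}$ is not, so $\chi_{Y_k}\otimes\tilde a$ does not vanish in $C_n(M;\alpha)$.

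\emph{Second gap: the iteration does not converge in $C_{n+1}(M;\alpha)$.} Your approximate filling leaves a residue $e_k = \chi_Z\otimes\tilde c$ that you propose to kill by iterating a geometric series of corrections. Even granting that an analogue of the Rokhlin step could be arranged on the small set $Z$ (which is itself unclear, since the Rokhlin lemma produces towers covering $X$, not a prescribed subset), each correction term involves translates of $\tilde c$ by a \emph{larger} F\o lner set, so new simplices keep appearing. The resulting infinite series lives only in the $\ell^1$-completion of $C_{n+1}(M;\alpha) = L^\infty(X;\Z)\otimes_{\Z\Gamma} C_{n+1}(\ucov M;\Z)$, not in $C_{n+1}(M;\alpha)$ itself, which by definition consists of \emph{finite} sums $\sum f_i\otimes\sigma_i$. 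The subtlety you flag at the end of your proposal (keeping $\Z$-coefficients) is secondary to this support problem.

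The paper sidesteps both issues at once: it invokes the Ornstein--Weiss Rokhlin lemma (Theorem~\ref{thm:ow}), which produces a \emph{finite} family of towers $F_1\cdot A_1,\dots,F_N\cdot A_N$ covering $X$ up to a set $B$ of measure $<\varepsilon$, fills each $F_k^{-1}\cdot\partial\tilde b$ via the filling lemma, and then defines $b_\varepsilon := \sum_{k=1}^N \chi_{A_k}\otimes\tilde b_k + \chi_B\otimes\tilde b$. The extra term $\chi_B\otimes\tilde b$ (the original inefficient filling cut down to a set of measure $<\varepsilon$) makes $\partial b_\varepsilon = 1\otimes\partial\tilde b = c$ exactly, with no residue and hence no need for iteration; its contribution $\mu(B)\cdot|\tilde b|_1 \le \varepsilon\cdot|\tilde b|_1$ to the norm tends to $0$ as $\varepsilon\to 0$. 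Your proposal is missing exactly this device, and that is what forces you into the iteration.
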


\subsection*{Applications}

Integral foliated simplicial volume is the dynamical sibling of
simplicial volume, defined as the parametrised
$\ell^1$-semi-norm of the fundamental
class~\cite{gromovmetric,mschmidt} (see Sections~\ref{subsec:ifsv} and
\ref{subsec:ifsvrel} for definitions). The main interest in integral
foliated simplicial volume comes from the fact that this invariant
gives an upper bound for $L^2$-Betti numbers (and whence the Euler
characteristic)~\cite{mschmidt}. It is therefore natural to
investigate for which (aspherical) manifolds vanishing of integral
foliated simplicial volume is equivalent to vanishing of ordinary
simplicial volume.

Integral foliated simplicial volume of oriented closed connected
aspherical manifolds with amenable fundamental group is
trivial~\cite{flps} and oriented closed connected Seifert
$3$-manifolds with infinite fundamental group have trivial integral
foliated simplicial volume~\cite{loehpagliantini}. Triviality of
integral foliated simplicial volume is preserved under taking
cartesian products~\cite{mschmidt}, finite
coverings~\cite{loehpagliantini}, and (in the aspherical case) under
ergodic bounded measure equivalence~\cite{loehpagliantini}. Integral
foliated simplicial volume of aspherical oriented closed connected
surfaces and hyperbolic $3$-manifolds coincides with ordinary
simplicial volume~\cite{loehpagliantini}. However, for
higher-dimensional hyperbolic manifolds, integral foliated simplicial
volume is uniformly bigger than ordinary simplicial
volume~\cite{flps}.

The parametrised versions of the uniform boundary condition serve as
first step for glueing results for integral foliated simplicial
volume. We will give a simple example of such a glueing result along
tori in Section~\ref{sec:glueing}. Moreover, the uniform boundary
condition for the parametrised $\ell^1$-norm on~$S^1$ is a crucial
ingredient in the treatment of $S^1$-actions for integral foliated
simplicial volume~\cite{fauser}.

Another application of the F\o lner filling technique is that one can
reprove the vanishing of $\ell^1$-homology of 
amenable groups -- without using bounded cohomology
(Section~\ref{sec:l1homology}).


\subsection*{Organisation of this article}

Section~\ref{sec:nch} contains a brief introduction into normed chain
complexes and the $\ell^1$-norms on singular chain complexes.  In
Section~\ref{sec:UBC}, we recall the terminology for the uniform
boundary condition and we survey the results of Matsumoto and Morita.
Section~\ref{sec:keylemmas} introduces the two key lemmas (lifting and
filling lemma) for the F\o lner filling argument.

The prototypical case of the uniform boundary condition for the
rational $\ell^1$-norm (Proposition~\ref{mainprop:Q}) is proved in
Section~\ref{sec:ubcq}.  The same proof with refined F\o lner
sequences gives Theorem~\ref{mainthm:stisv}
(Section~\ref{sec:stiubc}).  The dynamical versions
Theorem~\ref{mainthm:parsvab} and Theorem~\ref{mainthm:parsv} are
proved in Section~\ref{sec:parsvab} and Section~\ref{sec:parsv},
respectively. The uniform boundary condition on the ordinary integral
singular chain complex is briefly discussed in Section~\ref{sec:ubcz}.

Integral foliated simplicial volume of glueings along tori is considered
in Section~\ref{sec:glueing} and $\ell^1$-homology of amenable groups
is treated in Section~\ref{sec:l1homology}.

\subsection*{Acknowledgements}

We would like to thank the anonymous referee for carefully reading the manuscript
and providing constructive feedback.

\section{Normed chain complexes}\label{sec:nch}

We recall basic notions in the context of normed abelian groups and
normed chain complexes.

\subsection{Normed chain complexes}

\begin{defi}[(semi-)norms on abelian groups]
  \hfil
  \begin{itemize}
    \item A \emph{semi-norm} on an abelian group~$A$  
      is a map~$|\cdot| \colon A \longrightarrow \R_{\geq 0}$ 
        with the following properties: 
        \begin{itemize}
          \item  We have~$|0| = 0$.
          \item For all~$x \in A$ we have~$|-x| = |x|$.
          \item For all~$x , y \in A$ we have~$|x + y| \leq |x| + |y|$.
        \end{itemize}
      \item A \emph{norm} on an abelian group~$A$ is a
        semi-norm~$|\cdot|$ on~$A$ with the property that
        $|x| = 0$ holds only for~$x = 0$.
    \item A \emph{(semi-)normed abelian group} is an abelian group 
      together with a (semi-)norm.
    \item A homomorphism~$\varphi \colon A \longrightarrow B$ between
      normed abelian groups is \emph{bounded} if there is a constant~$C \in \R_{\geq 0}$
      satisfying for all~$a \in A$ the estimate
      \[ \bigl|\varphi(a)\bigr| \leq C \cdot |a|.
      \]
      The least such constant is the \emph{norm of~$\varphi$}, denoted by~$\|\varphi\|$. 
  \end{itemize}
\end{defi}

\begin{defi}[normed chain complex, induced semi-norm on homology]
    A \emph{normed} chain complex is a chain complex in the category
    of normed abelian groups (with bounded homomorphisms as morphisms).
    Let $C_*$ be a normed chain complex and let $n \in \N$. Then the
    norm~$|\cdot|$ on~$C_n$ induces a semi-norm~$\|\cdot\|$ on~$H_n(C_*)$
    via
    \[ \| \alpha\| := \inf \bigl\{ |c| \bigm| c \in C_n,\ \partial c = 0,\ [c] = \alpha \in H_n(C_*) \bigr\} \in \R_{\geq 0}
    \]
    for all~$\alpha \in H_n(C_*)$.
\end{defi}

We will also need the corresponding equivariant versions:

\begin{defi}[twisted normed modules]
  Let $\Gamma$ be a group. A \emph{normed $\Gamma$-module} is a normed
  abelian group together with an isometric $\Gamma$-action. 
\end{defi}

\subsection{The $\ell^1$-norm on the singular chain complex}

A geometrically interesting example of a normed chain complex
is given by the singular chain complex:

\begin{defi}[the twisted $\ell^1$-norm]
  Let $M$ be path-connected, locally path-connected topological space
  that admits a universal covering~$\ucov M$ (e.g., a connected
  CW-complex). Let $\Gamma := \pi_1(M)$, and let $A$ be a normed right
  $\Z \Gamma$-module. For $n \in \N$, we define the \emph{twisted $\ell^1$-norm} 
  \begin{align*}
    | \cdot |_1 \colon C_n(M;A) & \longrightarrow \R_{\geq 0} \\
    \sum_{j=1}^m f_j \otimes \sigma_j & \longmapsto \sum_{j=1}^m |f_j|
  \end{align*}
  on~$C_n(M;A) := A \otimes_{\Z \Gamma} C_n(\ucov M;\Z)$, where
  $C_n(\ucov M;\Z)$ carries the left $\Z \Gamma$-module structure
  induced by the deck transformation action of~$\Gamma$ on~$\ucov M$.
  Here, we assume that $\sum_{j=1}^m f_j \otimes \sigma_j$ is in
  \emph{reduced form}, i.e., that the singular simplices $\sigma_1,\dots,\sigma_m \in
  \map(\Delta^n,\ucov M)$ all belong to different $\Gamma$-orbits.
\end{defi}

In the situation of the previous definition, $C_*(M;A)$ is a normed
chain complex with respect to the twisted $\ell^1$-semi-norm. We denote
the induced \emph{twisted $\ell^1$-semi-norm} on~$H_*(M;A)$ by~$\|\cdot\|_{1,A}$.

Using the $\ell^1$-semi-norm on singular homology, we can define (twisted)
simplicial volumes:

\begin{defi}[simplicial volume]
  Let $M$ be an oriented closed connected $n$-manifold with
  fundamental group~$\Gamma$ and let $A$ be a normed $\Z\Gamma$-module
  together with a $\Z \Gamma$-homomorphism~$i \colon \Z \longrightarrow A$ 
	(where we consider~$\Z$ as $\Z \Gamma$-module with respect to the trivial $\Gamma$-action). 
	Then the \emph{$A$-simplicial volume of~$M$} is defined by
  \[ \bigl\| M \|_{A}
     := \bigl\| [M]_A \bigr\|_{1,A} \in \R_{\geq 0}, 
  \]
  where $[M]_A \in H_n(M;A)$ denotes the push-forward of the integral fundamental
  class~$[M]_\Z \in H_n(M;\Z)$ along~$i$.
\end{defi}

For example, the real numbers~$\R$ with the standard norm and the
canonical inclusion~$\Z \longrightarrow \R$ 
(of~$\Z \Gamma$-modules with trivial $\Gamma$-action)
lead to the classical simplicial volume by Gromov~\cite{vbc}.


\subsection{The parametrised $\ell^1$-norm}\label{subsec:ifsv}

We will now focus on twisted $\ell^1$-norms where the coefficients are
induced from actions of the fundamental group on probability spaces.
Twisted $\ell^1$-norms of this type lead to integral foliated
simplicial volume~\cite{gromovmetric,mschmidt}, a notion also
studied in Section~\ref{sec:glueing}.

\begin{defi}[standard $\Gamma$-space]
  Let $\Gamma$ be a countable group. A \emph{standard $\Gamma$-space} is
  a standard Borel probability space~$(X,\mu)$ together with a measurable
  probability measure preserving left $\Gamma$-action on~$(X,\mu)$. 
\end{defi}

Every countable group~$\Gamma$ admits an essentially free ergodic
standard $\Gamma$-space, for instance the Bernoulli
shift~\cite{mschmidt}.

\begin{defi}[parametrised $\ell^1$-norm]
  Let $M$ be a path-connected, locally path-connected topological
  space that admits a universal covering space~$\ucov M$, let $\Gamma :=
  \pi_1(M)$, and let $\alpha = \Gamma \actson (X,\mu)$ be a standard
  $\Gamma$-space. Then $L^\infty(X,\mu;\Z)$ together with the 1-norm
  \begin{align*}
    L^\infty(X,\mu;\Z) & \longrightarrow \R \\
    f & \longmapsto \int_X |f| \;d\mu
  \end{align*}
  and the canonical right $\Gamma$-action is a normed $\Z
  \Gamma$-module, which we also denote by~$L^\infty(X;\Z)$ or
  $L^\infty(\alpha;\Z)$. 
  The associated twisted $\ell^1$-norm on
  \[ C_*(M;\alpha) := L^\infty(X;\Z) \otimes_{\Z \Gamma} C_*(\ucov M;\Z)
  \]
  is the \emph{$\alpha$-parametrised $\ell^1$-norm}. 
\end{defi}

Let $M$ be an oriented closed connected $n$-manifold with fundamental group~$\Gamma$
and let $\alpha$ be a standard $\Gamma$-space. Then the \emph{$\alpha$-parametrised
simplicial volume of~$M$} is defined as
\[ \ifsv M ^\alpha := \| M \|_{L^\infty(\alpha;\Z)} \in \R_{\geq 0}.
\]
Taking the infimum over the set of all isomorphism classes of standard
$\Gamma$-spaces leads to the \emph{integral foliated simplicial
  volume~$\ifsv M$ of~$M$}. Integral foliated simplicial volume fits into
the following chain of inequalities~\cite{mschmidt,loehpagliantini}:
\[ \sv M \leq \ifsv M \leq \stisv M. 
\]
Here,\label{def:stisv}
\[ \stisv M := \inf_{(p\colon N \rightarrow M) \in F(M)} \frac{\sv N_\Z}{|\deg p|} \]
denotes the \emph{stable integral simplicial volume of~$M$} and 
$F(M)$ is the set of all isomorphism classes of finite
connected coverings of~$M$.

\section{The uniform boundary condition}\label{sec:UBC}

The uniform boundary condition in a normed chain complex asks for a uniform
linear bound on fillings of null-homologous cycles. 

\begin{defi}[uniform boundary condition; UBC~\cite{mm}]\label{def:ubc}
	Let~$C_*$ be a normed chain complex and let~$n \in \N$.
	We say that~$C_*$ \emph{satisfies the uniform boundary condition in degree~$n$}, 
	or in short \emph{\UBC n}, if there exists a constant~$K \in \R_{>0}$
	such that:
	For every null-homologous cycle~$c \in C_n$ there exists a filling 
	chain~$b\in C_{n+1}$ with
	\[ \partial b = c \quad \text{and} \quad |b| \leq K \cdot |c|.
	\]
\end{defi}

We briefly recall the results of Matsumoto and Morita~\cite{mm}:

\begin{thm}[UBC {\cite[Theorem~2.8]{mm}}]\label{thm:ubcmm} 
	Let~$M$ be a topological space and~$n \in \N$.
	Then the following are equivalent:
	\begin{enumerate}
		\item The normed chain complex~$C_*(M;\R)$ satisfies~\UBC n.
		\item The homomorphism~$H_b^{n+1}(M;\R) \longrightarrow H^{n+1} (M;\R)$ 
			induced by the inclusion~$C^{n+1}_b(M;\R) \longrightarrow C^{n+1}(M;\R)$ is injective.
	\end{enumerate}
\end{thm}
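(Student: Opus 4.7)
The plan is to reformulate both conditions in terms of two natural norms on the space~$B_n := \partial C_{n+1}(M;\R)\subseteq C_n(M;\R)$ of $n$-boundaries: the restricted $\ell^1$-norm~$|\cdot|_1$, and the filling norm~$\|c\|_f := \inf\{|b|_1 \mid b\in C_{n+1}(M;\R),\,\partial b = c\}$, which is the quotient norm on~$B_n\cong C_{n+1}(M;\R)/Z_{n+1}(M;\R)$.  By definition, \UBC n is precisely the inequality~$\|\cdot\|_f \leq K\cdot |\cdot|_1$ on~$B_n$, and the trivial bound~$|\cdot|_1 \leq (n+2)\cdot \|\cdot\|_f$ always holds (since any filling~$b$ of~$c$ satisfies~$|c|_1=|\partial b|_1\leq (n+2)|b|_1$); thus \UBC n is equivalent to the two norms being equivalent on~$B_n$.

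The key dictionary, via Hahn-Banach duality, is the following.  Identifying~$C_b^n(M;\R)$ with the topological dual of~$(C_n(M;\R),|\cdot|_1)$ and using the Kronecker-pairing isomorphism~$H^{n+1}(M;\R)\cong \Hom_\R(H_{n+1}(M;\R),\R)$, a bounded cocycle~$\omega\in C_b^{n+1}(M;\R)$ represents zero in~$H^{n+1}(M;\R)$ exactly when~$\omega$ vanishes on all cycles in~$Z_{n+1}(M;\R)$; equivalently, $\omega$ descends via~$\partial$ to a linear functional~$\tilde\omega\colon B_n\to\R$ that is bounded with respect to~$\|\cdot\|_f$.  Moreover, such an~$\omega$ equals~$\delta\eta'$ for some bounded~$\eta'\in C_b^n(M;\R)$ if and only if~$\tilde\omega$ extends to a linear functional on~$C_n(M;\R)$ bounded in~$|\cdot|_1$, which by Hahn-Banach is equivalent to~$\tilde\omega$ itself being $|\cdot|_1$-bounded on~$B_n$.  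Hence the injectivity of~$\Phi\colon H_b^{n+1}(M;\R)\to H^{n+1}(M;\R)$ translates to the statement that every~$\|\cdot\|_f$-bounded functional on~$B_n$ is also~$|\cdot|_1$-bounded.

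For (1)$\Rightarrow$(2), \UBC n makes the two norms on~$B_n$ equivalent, so the implication ``$\|\cdot\|_f$-bounded $\Rightarrow$ $|\cdot|_1$-bounded'' holds automatically; concretely, given a bounded cocycle~$\omega=\delta\eta$, one defines~$\tilde\eta(c):=\omega(b)$ for any filling~$b$ of~$c$ (well-defined because~$\omega$ vanishes on cycles), uses \UBC n to obtain~$|\tilde\eta(c)|\leq K\cdot\|\omega\|_\infty\cdot|c|_1$, and extends by Hahn-Banach to the required bounded primitive~$\eta'$.  For~(2)$\Rightarrow$(1) I argue the contrapositive: if \UBC n fails, then~$|\cdot|_1$ and~$\|\cdot\|_f$ are inequivalent on~$B_n$, so by a standard duality argument (inequivalent norms yield distinct bounded duals, via the open mapping theorem applied to the dual Banach spaces combined with Hahn-Banach norming) there exists a linear functional~$\tilde\omega$ on~$B_n$ that is~$\|\cdot\|_f$-bounded but not~$|\cdot|_1$-bounded.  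Pulling~$\tilde\omega$ back along~$\partial$ produces a bounded cochain~$\omega\in C_b^{n+1}(M;\R)$ that vanishes on~$Z_{n+1}(M;\R)$, is automatically a cocycle (since~$B_{n+1}\subseteq Z_{n+1}$), and represents a nontrivial class in~$H_b^{n+1}(M;\R)$ mapping to zero in~$H^{n+1}(M;\R)$, contradicting injectivity of~$\Phi$.  The main technical obstacle lies in this last duality step—producing the separating functional~$\tilde\omega$—while the rest of the argument is routine bookkeeping once the dictionary between cochains on~$C_{n+1}$ vanishing on~$Z_{n+1}$ and functionals on~$B_n$ with its two norms is in place.
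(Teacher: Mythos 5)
The paper itself does not prove Theorem~\ref{thm:ubcmm}; the statement is quoted from Matsumoto and Morita \cite[Theorem~2.8]{mm} without an internal proof, so there is no paper argument to compare yours against.

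That said, your sketch is correct, and the step you flag as ``the main technical obstacle'' is not actually a gap: the parenthetical remark already contains the right proof. The translation you set up is accurate. A bounded $(n+1)$-cochain $\omega$ vanishing on $Z_{n+1}(M;\R)$ is automatically a cocycle with trivial class in $H^{n+1}(M;\R)$; it descends along $\partial$ to a $\|\cdot\|_f$-bounded functional $\tilde\omega$ on $B_n$; and $\omega$ lies in $\delta C^n_b(M;\R)$ precisely when $\tilde\omega$ is also $|\cdot|_1$-bounded (by Hahn--Banach extension to $C_n(M;\R)$). Conversely every $\|\cdot\|_f$-bounded functional on $B_n$ pulls back along $\partial$ to such an $\omega$. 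So condition (2) is exactly the statement that $(B_n,|\cdot|_1)'=(B_n,\|\cdot\|_f)'$ as sets of functionals. For (2)$\Rightarrow$(1): even though neither $(B_n,|\cdot|_1)$ nor $(B_n,\|\cdot\|_f)$ is complete, their dual spaces \emph{are} Banach spaces (the dual of any normed space is complete). The inclusion $(B_n,|\cdot|_1)'\hookrightarrow(B_n,\|\cdot\|_f)'$ is bounded because $|\cdot|_1\leq(n+2)\cdot\|\cdot\|_f$; if (2) holds, this inclusion is a bijection between Banach spaces, hence a Banach isomorphism by the bounded inverse theorem, giving a constant $C'$ with $\|\tilde\omega\|_{(B_n,|\cdot|_1)'}\leq C'\cdot\|\tilde\omega\|_{(B_n,\|\cdot\|_f)'}$ for all functionals. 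Hahn--Banach norming then yields $\|c\|_f\leq C'\cdot|c|_1$ for all $c\in B_n$, i.e.\ \UBC{n} with constant $C'$. So the two theorems you named are exactly what is needed, and your argument is complete once this is written out.
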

Here, $C^{*}_b(M;\R) \subset C^{*}(M;\R)$ denotes the subcomplex of the 
singular co\-chain complex of~$M$ with real coefficients 
consisting of bounded linear maps and~$H_b^{*}(M;\R)$
denotes the cohomology of~$C_b^*(M;\R)$, the so-called \emph{bounded cohomology
of~$M$ (with coefficients in~$\R$)}.

Since bounded cohomology of topological spaces with amenable
fundamental group is trivial~\cite{brooks,vbc},
Theorem~\ref{thm:ubcmm} implies the following 
result of which Proposition~\ref{mainprop:Q} is a special case:

\begin{cor}[UBC and amenability]
	Let~$M$ be a topological space with amen\-able fundamental group and let~$n\in \N$.
	Then the chain complex $C_* (M;\R)$ satisfies \UBC n.
\end{cor}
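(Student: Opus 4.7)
The plan is to deduce the corollary directly from Theorem~\ref{thm:ubcmm} together with the classical vanishing theorem for bounded cohomology of spaces with amenable fundamental group. Concretely, I would argue as follows: by Theorem~\ref{thm:ubcmm}, the chain complex~$C_*(M;\R)$ satisfies $n$-$\ubc$ if and only if the comparison map
\[ c^{n+1} \colon H_b^{n+1}(M;\R) \longrightarrow H^{n+1}(M;\R) \]
induced by the inclusion~$C_b^{n+1}(M;\R) \hookrightarrow C^{n+1}(M;\R)$ is injective. Hence it suffices to establish injectivity of~$c^{n+1}$ in the amenable case.

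The key input is the theorem of Trauber/Brooks/Gromov~\cite{brooks,vbc} which asserts that for any topological space~$M$ with amenable fundamental group~$\Gamma$, the bounded cohomology~$H_b^{k}(M;\R)$ vanishes in every positive degree. In particular, $H_b^{n+1}(M;\R) = 0$, so that the comparison map~$c^{n+1}$ is the zero map from the zero group and is trivially injective. Applying Theorem~\ref{thm:ubcmm} in this direction then yields $n$-$\ubc$ for~$C_*(M;\R)$, as desired.

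Since both Theorem~\ref{thm:ubcmm} and the vanishing theorem for bounded cohomology of amenable fundamental groups are quoted from the literature, there is no genuine obstacle in the proof beyond citing these two results. The only small point worth spelling out is that the quoted vanishing theorem applies to arbitrary topological spaces (and not just, say, to~$\CW$-complexes), which is already the setting of Matsumoto and Morita's theorem. Proposition~\ref{mainprop:Q} is then indeed a special case, restricted to aspherical spaces, and the novelty of the present article lies in giving a constructive F\o lner-type proof of that special case that additionally produces integral information, as formulated in Theorems~\ref{mainthm:stisv}, \ref{mainthm:parsvab}, and~\ref{mainthm:parsv}.
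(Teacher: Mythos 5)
Your proof is correct and is exactly the argument the paper gives: the corollary is deduced from Theorem~\ref{thm:ubcmm} together with the vanishing of bounded cohomology for spaces with amenable fundamental group (Brooks, Gromov). Nothing to add.
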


\section{The filling lemma and the lifting lemma}\label{sec:keylemmas}

The F\o lner filling strategy starts with a lifting step and a filling step.
These steps mainly rely on the following Lemmas~\ref{lem:lifting} and~\ref{lem:filling}.

\subsection{The filling lemma}

In contractible spaces, boundaries can be filled efficiently. 

\begin{lem}[filling lemma]\label{lem:filling}
  Let $M$ be a contractible topological space, let $A$ be a normed
  $\Z$-module, and let $n \in \N$. For every~$c \in C_n(M;A)$
  there exists a chain~$c' \in C_n(M;A)$ with
  \[ \partial c' = \partial c
     \quad\text{and}\quad
     |c'|_{1} \leq |\partial c|_{1}.
  \]
\end{lem}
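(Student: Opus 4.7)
The plan is to use contractibility of $M$ to build an explicit cone operator on the integral singular chain complex, extend it with $A$-coefficients, and take $c'$ to be the cone over $\partial c$.

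Concretely, I would fix a basepoint $x_0 \in M$ together with a contracting homotopy $H \colon M \times [0,1] \longrightarrow M$ satisfying $H(\args,0) = \id_M$ and $H(\args,1) \equiv x_0$. Identifying $\Delta^{n+1}$ with the join of a new vertex $v$ (playing the role of $e_0$) and $\Delta^n$ (spanned by $e_1,\dots,e_{n+1}$), I would define a cone operator $K \colon C_n(M;\Z) \longrightarrow C_{n+1}(M;\Z)$ on each singular simplex $\sigma \colon \Delta^n \longrightarrow M$ by
\[ K(\sigma)\bigl(t v + (1-t)w\bigr) := H\bigl(\sigma(w),\, t\bigr).
\]
The crucial feature is that this sends one simplex to one simplex. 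Tensoring with $A$ over $\Z$ then yields an operator $K_A \colon C_n(M;A) \longrightarrow C_{n+1}(M;A)$ acting on generators as $a \otimes \sigma \longmapsto a \otimes K(\sigma)$. A routine computation with the face inclusions of the join produces the standard chain-homotopy identity $\partial K + K \partial = \id$ in degrees~$\geq 1$, while in degree~$0$ one instead obtains $\partial K(\sigma) = \sigma - x_0$ for every singular $0$-simplex~$\sigma$.

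Now I would set $c' := K_A(\partial c)$. For $n \geq 2$ the chain-homotopy identity applied to the cycle~$\partial c$ immediately gives $\partial c' = \partial c$; in the borderline case $n = 1$ the extra term in degree zero is (up to sign) the augmentation of~$\partial c$ times the constant simplex at~$x_0$, and this augmentation vanishes since $\partial c$ is itself a boundary. For the norm estimate, if I write $\partial c = \sum_j a_j \otimes \tau_j$ in reduced form, then $c' = \sum_j a_j \otimes K(\tau_j)$, so the triangle inequality for the $\ell^1$-norm yields
\[ |c'|_1 \leq \sum_j |a_j| = |\partial c|_1,
\]
as required.

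The whole construction is explicit, and I expect the only place needing a moment of care is the degree-zero bookkeeping sketched above. Everything else reduces to the standard cone identity, together with the observation that $K$ touches neither the coefficient $a$ nor the number of simplex summands and hence cannot inflate the $\ell^1$-norm.
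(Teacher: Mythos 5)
Your proof is correct and follows essentially the same strategy as the paper: construct a cone operator $K$ of operator norm $\leq 1$ on singular chains, extend it to $A$-coefficients, and take $c' := K(\partial c)$. The difference is in how the cone is built: the paper constructs $h(\sigma)$ inductively over the dimension of $\sigma$, using contractibility only through the extension property for maps $\partial\Delta^{k+1} \to M$, whereas you write down $K(\sigma)$ in closed form from a chosen contracting homotopy $H$ via the join parametrisation. Both are legitimate (a contractible space does admit such an $H$), and both produce an operator that sends one simplex to one simplex without touching the coefficient, which is exactly what makes the $\ell^1$-estimate $|c'|_1 \leq |\partial c|_1$ immediate. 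Your degree-zero bookkeeping is a good point: the identity $\partial K + K\partial = \id$ really does acquire an extra augmentation term in degree~$0$, and your observation that this term vanishes when applied to the boundary $\partial c$ (since a boundary has augmentation zero) is the correct way to handle it; the paper's write-up glosses over this detail, though the final conclusion is unaffected for the same reason.
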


Statements of this form can be proved by adapting the original proof
of Frigerio, L\"oh, Pagliantini, and Sauer~\cite[Lemma~6.3]{flps} from
the case of $\Z$-coefficients to general normed coefficients. We will slightly
modify the filling construction in order to improve the filling bound to~$1$.
  
\begin{proof}
  We will construct a cone-type chain contraction
	\[ h \colon C_* (M;A) \longrightarrow C_{*+1}(M;A)
	\]
	of norm~$\leq 1$, inductively on the dimension of the simplices:
	
	Let~$x_0 \in M$. For each $0$-simplex $\sigma$ in~$M$
	we choose a path $h(\sigma) \colon \Delta^1 \longrightarrow M$ from~$x_0$ to~$\sigma (1)$.
	We proceed inductively as follows: 
	Let~$k\in \N$ and let~$\sigma$ be a $k$-simplex in~$M$.
	Consider the map $\partial \Delta^{k+1} \longrightarrow M$ that is given by~$\sigma$ 
	on the 0-th face
	and~$h(\sigma \circ i_{l-1}^k)$ on the~$l$-th face for all~$l \in \{1,\dots, k+1\}$.
	Here, $i_{l-1}^k$ denotes the inclusion of the $(l-1)$-face in the standard $k$-simplex.
	Since~$M$ is contractible, this extends to a map $h(\sigma) \colon \Delta^{k+1} \longrightarrow M$.
	Finally, for all~$k\in \N$ and all~$c = \sum_{j=1}^m a_j \otimes \sigma_j \in C_k (M;A)$ we define
	\[ h(c) := \sum_{j=1}^m a_j \otimes h(\sigma_j) \in C_{k+1} (M;A)
	\]
	By construction,~$\|h\|\leq 1$ and
	for all~$k\in \N$ and all $k$-simplices~$\sigma$ we have
	\begin{align*}
		\partial h ( \sigma ) = \sum_{j=0}^{k+1} (-1)^j \cdot h ( \sigma ) \circ i_j^{k+1}
		            = \sigma + \sum_{j=1}^{k+1} (-1)^j \cdot h ( \sigma \circ i_{j-1}^{k} )
								= \sigma + h ( - \partial \sigma ),
	\end{align*}
	and therefore, we have~$\partial \circ h +h \circ \partial = \id$.
	
	Let~$c \in C_n(M;A)$. We set~$c' := h ( \partial c ) \in C_n(M;A)$. Then we have 
	\[|c'|_{1} \leq ||h|| \cdot |\partial c|_{1} \leq |\partial c|_{1}
	\] and
	\[ \partial c' = \partial h (\partial c) = \partial c - h (\partial \partial c ) = \partial c + 0. \qedhere
	\]
\end{proof}

\subsection{The lifting lemma}

Lifting cycles to the universal covering in general will not lead to cycles; however,
the size of the boundary of finite translates of such lifts will basically only grow
like the boundary of the finite set of group elements. 

\begin{lem}[lifting lemma]\label{lem:lifting}
  Let $M$ be a path-connected and locally path-con\-nected space 
	that admits a universal covering,
  let $\Gamma :=\pi_1(M)$, let $A$ be a normed $\Z\Gamma$-module, and
  let $\ucov a \in A \otimes_\Z C_n(\ucov M;\Z)$ be a lift of~$0 \in A
  \otimes_{\Z \Gamma} C_n(\ucov M;\Z)$. Then there is a constant~$C
  \in \R_{>0}$ and a finite set~$S \subset \Gamma$ such that the
  following holds: For every finite subset~$F \subset \Gamma$ we have
  \[ |F \cdot \widetilde a|_{1} \leq C \cdot |\partial_S F|,
  \]
  where $|F \cdot \widetilde a|_1$ is the $\ell^1$-norm on~$A \otimes_\Z C_n(\widetilde M;\Z)$
	induced by the norm on~$A$.
\end{lem}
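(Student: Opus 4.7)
The plan is to use that $\widetilde a$, being a lift of $0$ along the quotient
$A \otimes_\Z C_n(\ucov M;\Z) \twoheadrightarrow A \otimes_{\Z\Gamma} C_n(\ucov M;\Z)$,
lies in the kernel of this surjection, and that this kernel is generated as an abelian group by the defining tensor relations
\[
r(a,\gamma,\sigma) := a\gamma \otimes \sigma - a \otimes \gamma\sigma
\qquad (a \in A,\ \gamma \in \Gamma,\ \sigma \in \map(\Delta^n,\ucov M)).
\]
I would then exploit the diagonal $\Gamma$-action $g \cdot (a \otimes \sigma) := ag^{-1} \otimes g\sigma$ on $A \otimes_\Z C_n(\ucov M;\Z)$. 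This is the natural choice: it descends to the trivial action on $C_n(M;A)$ (so $F \cdot \widetilde a$ remains in the kernel), and more importantly it causes summation over a finite set $F \subset \Gamma$ of a single relation to collapse to a chain supported only on the symmetric difference $F \triangle F\gamma$.

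Concretely, I would first fix a finite presentation
\[
\widetilde a \;=\; \sum_{j=1}^N \bigl(a_j\gamma_j \otimes \sigma_j - a_j \otimes \gamma_j\sigma_j\bigr)
\]
and declare $S := \{\gamma_1^{\pm 1}, \dots, \gamma_N^{\pm 1}\}$ together with $C := 2 \sum_{j=1}^N |a_j|$; both depend only on the chosen presentation of $\widetilde a$, not on $F$. By linearity this reduces the estimate to a single relation $r = r(a,\gamma,\sigma)$, for which
\[
F \cdot r \;=\; \sum_{g \in F} a\gamma g^{-1} \otimes g\sigma \;-\; \sum_{h \in F\gamma} a\gamma h^{-1} \otimes h\sigma
\]
after the substitution $h = g\gamma$ in the second sum, so that the contributions indexed by $g \in F \cap F\gamma$ cancel exactly. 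Because the deck transformation action on the singular simplices of $\ucov M$ is free, the surviving simplices $g\sigma$ are pairwise distinct; combined with the isometry of the right $\Gamma$-action on $A$ (which gives $|a\gamma g^{-1}| = |a|$) this yields $|F \cdot r|_1 \leq |a| \cdot |F \triangle F\gamma|$.

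Summing over $j$ and invoking the standard inequality $|F \triangle F\gamma_j| \leq 2 \cdot |\partial_S F|$, valid for every $\gamma_j \in S$ once $\partial_S F$ is defined relative to $S$, then produces the claimed bound $|F \cdot \widetilde a|_1 \leq C \cdot |\partial_S F|$. The main obstacle is essentially conceptual rather than computational: one must pick the correct $\Gamma$-action on the unreduced tensor product, as the diagonal action is the only one that turns the defining relations into telescoping sums. Acting only on the chain factor would instead leave $|F \cdot \widetilde a|_1$ of order $|F|$ rather than of order $|\partial_S F|$ and destroy the F\o lner-type estimate. Once that choice is made, everything else is bookkeeping about symmetric differences.
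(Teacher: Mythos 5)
Your proof is correct, and it takes a genuinely different (and in some ways cleaner) route from the paper's. The paper works directly with the reduced form $\widetilde a = \sum_{\tau\in K}\sum_{s\in S(\tau)} f_{s\cdot\widetilde\tau}\otimes s\cdot\widetilde\tau$, translates the lift-of-zero hypothesis into the coefficient identity $\sum_{s\in S} f_{s\cdot\widetilde\tau}\cdot s = 0$ for each orbit representative~$\widetilde\tau$, and then writes $F\cdot\widetilde a = X + Y$ where $X$ (a re-indexed sum over the interior $F\setminus\partial_S F$) vanishes by that identity and $Y$ is bounded by counting how often each coefficient appears. You instead observe that $\widetilde a$ lies in the kernel of $A\otimes_\Z C_n(\ucov M;\Z)\twoheadrightarrow A\otimes_{\Z\Gamma} C_n(\ucov M;\Z)$, that this kernel is $\Z$-generated by the elementary relations $a\gamma\otimes\sigma - a\otimes\gamma\sigma$, and then reduce by linearity to a single such relation, for which $F\cdot r$ is an exact telescoping sum supported on $F\triangle F\gamma$. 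The two arguments share the key structural insight — the diagonal $\Gamma$-action makes the relation defining $\otimes_{\Z\Gamma}$ collapse over interior translates — but your decomposition into elementary relations makes the telescoping mechanism completely explicit and yields the exact equality $|F\cdot r|_1 = |a|\cdot|F\triangle F\gamma|$ per relation, at the cost of choosing a (non-canonical) presentation of $\widetilde a$ as a sum of such relations, so your $S$ and $C$ depend on that presentation rather than on the reduced form of $\widetilde a$ directly. Both approaches are valid; yours is arguably the more modular and conceptually transparent of the two. One tiny remark: if all $a_j = 0$ your $C$ vanishes, whereas the lemma asks for $C\in\R_{>0}$; this is vacuous (then $\widetilde a = 0$), but one can simply take $C := \max\bigl(1,\ 2\sum_j|a_j|\bigr)$ to be safe.
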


\begin{notation}
	Here, $\partial_S F = \{\gamma \in F~|~\exists_{s\in S}~\gamma \cdot s \notin F\}$
	denotes the $S$-boundary of~$F$ in~$\Gamma$ and we set~$F \cdot \widetilde a := \sum_{\gamma \in F} \gamma \cdot \widetilde a$,
	where the $\Gamma$-action on~$A \underset{\Z}{\otimes} C_n(\ucov M; \Z)$ is given by
	\[ \gamma \cdot (a \otimes \sigma) := a \cdot \gamma^{-1} \otimes \gamma \cdot \sigma
	\]
	for all~$\gamma \in \Gamma$, $a \in A$ and~$\sigma \in \map(\Delta^n,\ucov M)$.
	The canonical projection
	\[ A \underset{\Z}{\otimes} C_n(\ucov M; \Z) \longrightarrow A \underset{\Z \Gamma}{\otimes} C_n(\ucov M; \Z).
	\]
	justifies the term "lift" in the lifting lemma.
\end{notation}

\begin{proof}
  In this case, the generalisation from the case of
  $\Z$-coefficients~\cite[Lemma~6.2]{flps} is slightly
  more involved (but the proof in spirit is the same):

	Let~$\widetilde{a}=\sum_{j=1}^k{f_{\tau_j}\otimes \tau_j}$ be in reduced form
	in~$A \otimes_\Z C_n(\ucov M;\Z)$,
	i.e., $\tau_i \neq \tau_j$ if $i\neq j$.
	Let
	\[ K:=\big\{C_n(\pi;\Z)(\tau_j)~\big|~j\in\{1,\dots,k\}\big\},
	\]
	where~$\pi \colon \ucov M\longrightarrow M$ is the universal covering.
	For every $\tau\in K$ we choose a lift~$\widetilde{\tau}\in C_n(\ucov M;\Z)$
	that occurs in~$\widetilde{a}$ and we set
	\[ S(\tau):=\{\gamma\in\Gamma~|~\exists_{j\in\{1,\dots,k\}}~\tau_j = \gamma \cdot \widetilde{\tau}\}.
	\]
	Let~$S := \bigcup_{\tau \in K} (S(\tau) \cup S(\tau)^{-1}) \subset \Gamma$.
	Using~$S$, we can write
	\begin{align*}
		\widetilde{a}&=\sum_{\tau \in K}\sum_{s \in S(\tau)}
		          {f_{s \cdot \widetilde{\tau}} \otimes s \cdot \widetilde{\tau}}
		 =\sum_{\tau \in K}\sum_{s\in S}
		  {f_{s \cdot \widetilde{\tau}} \otimes s \cdot \widetilde{\tau}},
	\end{align*}
	where we set~$f_{s \cdot \widetilde{\tau}}:=0$ for all $s \in S \setminus S(\tau)$. Because 
	$\widetilde{a}$ is a lift of zero, for every~$\tau \in K$, we have
	\begin{align}
	\label{eq:liftofzero}
		\sum_{s \in S}{f_{s \cdot \widetilde{\tau}} \cdot s}=0.
	\end{align}
	
	Let~$F \subset \Gamma$ be a finite subset. The goal is to estimate the $\ell^1$-norm of 
	\begin{align}
	\label{eq:translates}
				F \cdot \widetilde{a}
        = \sum_{\gamma \in F} \gamma \cdot \widetilde a 
        = \sum_{\gamma \in F}\sum_{\tau \in K}\sum_{s \in S}
	  {f_{s \cdot \widetilde{\tau}} \cdot \gamma^{-1} \otimes \gamma \cdot s \cdot \widetilde{\tau}}.
	\end{align}
        To this end, we split~$F \cdot \widetilde a$ in the following
        way as a sum~$X + Y$: We set
	\[ X:=\sum_{\gamma \in F \setminus \partial_S F}\sum_{\tau \in K}\sum_{s \in S}
	   {f_{s \cdot \widetilde{\tau}} \cdot s \cdot \gamma^{-1} \otimes \gamma \cdot \widetilde{\tau}}
           \in A \otimes_\Z C_n(\ucov M;\Z).
	\]
        By definition of the boundary~$\partial_S F$,    
        if $s \in S$ and $\gamma \in F\setminus \partial_S F$, then
        $\gamma \cdot s^{-1} \in F$ (because $S$ is symmetric). 
	Each summand in~$X$ occurs as a summand in~$F \cdot \widetilde{a}$
	(see Equation~(\ref{eq:translates}) with~$\gamma$ set to~$\gamma \cdot s^{-1}$) and the pairs in
	\[ \bigl\{(s \cdot \gamma^{-1},\gamma \cdot \widetilde{\tau}) \bigm|
	  s \in S,\ \gamma \in F \setminus \partial_S F,\ \tau \in K \bigr\}
	\]
	are pairwise distinct. 
	Thus, we can write~$F \cdot \widetilde{a}=X+Y$,
	where~$Y$ is the sum of summands that occur in~$F \cdot \widetilde{a}$ but not in~$X$. 
	We will now estimate~$X$ and~$Y$ separately: 

        We start with~$|Y|_1$: 
        Let $s \in S$ and $\tau \in K$. Then 
	$f_{s \cdot \widetilde{\tau}}$ occurs exactly~$|F|$
	times in~$F \cdot \widetilde{a}$ and exactly~$|F \setminus \partial_S F|$
	times in~$X$. Therefore, it follows that
	each~$f_{s \cdot \widetilde{\tau}}$ occurs exactly~$|\partial_S F|$
	times in~$Y$, which implies 
	\[ |Y|_{1} \leq |\partial_S F|\cdot |K| \cdot |S| \cdot m,
	\]
	where~$m:=\max\bigl\{|f_{s \cdot \widetilde{\tau}}|_A \bigm| s \in S, \tau \in K\bigr\}$.
	
	Finally, the sum~$X$ is zero, because for each~$\tau \in K$
	and~$\gamma \in F \setminus \partial_S F$ we have
	\begin{align*}
		\sum_{s \in S}{f_{s \cdot \widetilde{\tau}}\cdot s \cdot \gamma^{-1}
		 \otimes \gamma \cdot \widetilde{\tau}}
		&=\biggl(\sum_{s \in S} f_{s \cdot \widetilde{\tau}}\cdot s\biggr) \cdot \gamma^{-1}
		\otimes \gamma \cdot \widetilde{\tau}
                \stackrel{(\ref{eq:liftofzero})}{=}0.
	\end{align*}

	We conclude $|F \cdot \widetilde{a}|_{1} = |Y|_{1} \leq C \cdot |\partial_S F|$,
	where $C:=|K| \cdot |S| \cdot m$ depends only on~$\widetilde a$, but not on~$F$.
\end{proof}

\section{Rational UBC for amenable groups}\label{sec:ubcq}

In this section, we will prove Proposition~\ref{mainprop:Q}.

\subsection{Amenable groups and F\o lner sequences}
We briefly recall the definition of amenable groups 
via left-invariant means
and the characterisation of amenable groups via the F\o lner criterion.
For the details we refer to the literature~\cite{paterson}.

\begin{defi}[amenable]
	A group~$\Gamma$ is called \emph{amenable} if it admits a left-invariant mean, i.e., an $\R$-linear map~$m \colon \ell^\infty (\Gamma,\R) \longrightarrow \R$ that is normalised, positive and left-invariant with respect to the $\Gamma$-action on~$\ell^\infty (\Gamma,\R)$ induced by the right-translation of~$\Gamma$ on itself.
\end{defi}

\begin{thm}[F\o lner sequences]
	Let~$\Gamma$ be a finitely generated group with finite generating set~$S$. Then the following are equivalent:
	\begin{enumerate}
		\item The group~$\Gamma$ is amenable.
		\item The group~$\Gamma$ admits a \emph{F\o lner sequence (with respect to~$S$)}, i.e., a sequence~$(F_k)_{k\in \N}$ of non-empty finite subsets of~$\Gamma$ satisfying 
			\[ \lim_{k\rightarrow \infty} \frac{|\partial_S F_k|}{|F_k|} =0.
			\]
	\end{enumerate}
\end{thm}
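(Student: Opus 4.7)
The plan is to treat the two implications separately; without loss of generality I assume that $S$ is symmetric (by replacing $S$ with $S \cup S^{-1}$ if necessary). For $(2) \Rightarrow (1)$ the idea is to average over the F\o lner sets. Given a F\o lner sequence $(F_k)_{k \in \N}$, I would consider the functionals
\[ m_k \colon \ell^\infty(\Gamma,\R) \longrightarrow \R,\quad f \longmapsto \frac{1}{|F_k|} \sum_{\gamma \in F_k} f(\gamma). \]
Each $m_k$ is a positive, normalised mean of norm $1$, so by Banach-Alaoglu the sequence $(m_k)$ has a weak-$*$ accumulation point $m$ in the unit ball of $\ell^\infty(\Gamma,\R)^*$ that inherits positivity and normalisation. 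For $s \in S$ the difference
\[ m_k(s \cdot f) - m_k(f) = \frac{1}{|F_k|}\biggl(\sum_{\gamma \in F_k s} f(\gamma) - \sum_{\gamma \in F_k} f(\gamma)\biggr) \]
is supported on the symmetric difference $F_k \triangle F_k s$, and hence is controlled by $2\cdot |\partial_S F_k| \cdot \|f\|_\infty / |F_k|$. By the F\o lner condition this tends to $0$; since $S$ generates $\Gamma$, a telescoping argument propagates $\Gamma$-invariance of $m$ from $S$ to all of $\Gamma$.

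For $(1) \Rightarrow (2)$ my approach is the classical two-step route via \emph{Reiter's condition} and the \emph{Namioka slicing trick}. In the first step I would show that amenability produces almost-invariant probability measures: for every finite $T \subset \Gamma$ and every $\epsilon > 0$, there exists a finitely supported probability measure $\nu \in \ell^1(\Gamma,\R_{\geq 0})$ satisfying $\|s \cdot \nu - \nu\|_1 < \epsilon$ for all $s \in T$. This follows from a Hahn-Banach / convexity argument: the existence of an invariant mean forces $0$ to lie in the weak-$*$ closure of the convex subset
\[ \bigl\{ (s \cdot \nu - \nu)_{s \in T} \bigm| \nu \text{ a finitely supported probability measure on } \Gamma \bigr\} \subset \bigoplus_{s \in T} \ell^1(\Gamma), \]
and since weak and norm closures of convex subsets of $\ell^1$ coincide, one may approximate simultaneously for all $s \in T$.

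The second step converts this almost-invariant $\ell^1$-function into a F\o lner set. Writing $\nu = \int_0^\infty \mathbf 1_{F_t}\,dt$ with $F_t := \{\gamma \in \Gamma \mid \nu(\gamma) > t\}$, a short Fubini computation yields
\[ \|s \cdot \nu - \nu\|_1 = \int_0^\infty |F_t \triangle F_t s|\,dt \quad\text{and}\quad \|\nu\|_1 = \int_0^\infty |F_t|\,dt. \]
Summing over $s \in S$ and applying a pigeonhole argument, I would extract some level $t$ at which $\sum_{s \in S} |F_t \triangle F_t s| \leq 2 \cdot |S| \cdot \epsilon \cdot |F_t|$; together with the elementary bound $|\partial_S F| \leq \sum_{s \in S} |F \triangle Fs|$, iterating along a sequence $\epsilon_k \to 0$ then yields the desired F\o lner sequence. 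The main obstacle is this second direction, where the two-stage passage from an abstract invariant mean through Reiter's condition and then through the slicing argument requires careful bookkeeping with probability measures and superlevel sets; this is where textbook treatments such as~\cite{paterson} expend most of their effort.
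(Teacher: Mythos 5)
The paper does not prove this theorem; it is stated as standard background on amenability, with the reader referred to the literature (Paterson's monograph~\cite{paterson}) for details. Your sketch is the classical proof found in such references, so there is nothing to compare at the level of strategy. For $(2)\Rightarrow(1)$ the averaging functionals and a weak-$*$ accumulation point are exactly right, and the bound $|F_k\triangle F_ks|\le 2\,|\partial_S F_k|$ holds for any $s\in S$ (even without symmetrising $S$, since $|F_k\setminus F_ks|=|F_ks\setminus F_k|$). For $(1)\Rightarrow(2)$ the Day/Mazur reduction to Reiter's property followed by Namioka's layer-cake argument is the standard route. One point deserves sharper wording: what the convexity step actually uses is that a net of probability densities $\nu_\alpha\in\ell^1(\Gamma)\subset\ell^\infty(\Gamma)^*$ converging weak-$*$ to the invariant mean $m$ gives $(s\cdot\nu_\alpha-\nu_\alpha)_{s\in T}\to 0$ weak-$*$, and that the weak-$*$ topology of $\ell^\infty(\Gamma)^*$ restricted to $\ell^1(\Gamma)$ is precisely the weak topology of $\ell^1(\Gamma)$ (because $\ell^\infty=(\ell^1)^*$); only then is Mazur's theorem (norm closure $=$ weak closure for convex sets) applicable. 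Your phrase ``$0$ lies in the weak-$*$ closure $\ldots$ since weak and norm closures of convex subsets of $\ell^1$ coincide'' silently conflates these two topologies. With that clarification, and noting that the final pigeonhole level $t$ must be chosen with $|F_t|>0$ (which is automatic since $\|\nu\|_1=1$), the argument is correct and complete.
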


Recall that finite groups and abelian groups are amenable
and that the class of amenable groups is closed under taking subgroups, quotient groups and extensions.
Groups that contain a free group of rank~$2$ as subgroup are \emph{not} amenable.

\subsection{Proof of UBC via F\o lner sets}

Now we are prepared to prove Proposition~\ref{mainprop:Q} via the strategy outlined in the Introduction. The basic steps are also illustrated in Figure~\ref{fig:filling}.

\begin{proof}[Proof of Proposition~\ref{mainprop:Q}]
	Let~$c\in C_n(M;\Q)$ be a null-homologous cycle, i.e., 
	there exists~$b\in C_{n+1}(M;\Q)$ with~$\partial b = c$.
	We will use~$b$ to find a more efficient filling of~$c$.
	Let~$\pi \colon \ucov M \longrightarrow M$ be the universal covering of~$M$
	and let $\Gamma := \pi_1(M)$ be the fundamental group of~$M$.
	
	\emph{Lifting step:} Let~$\ucov c \in C_n(\ucov M;\Q)$ be a $\pi$-lift of~$c$
	with $|\ucov c|_1 \leq |c|_1$ (e.g., by lifting~$c$ simplex by simplex)
	and let $\ucov b \in C_{n+1}(\ucov M;\Q)$ be a $\pi$-lift of~$b$.
	We consider
	\[ \ucov a := \partial \ucov b -\ucov c \in C_n(\ucov M;\Q).
	\]
	Then~$\ucov a$ is a $\pi$-lift of $0 \in C_n(M;\Q)$.
	By the lifting lemma (Lemma~\ref{lem:lifting}) there exist~$C \in \R_{>0}$
	and a finite subset~$S \subset \Gamma$ such that the following holds:
	for all finite sets~$F \subset \Gamma$ we have
	\[ |F \cdot \ucov a|_1 \leq C \cdot |\partial_S F|.
	\]
	The group~$\Lambda := \langle S \rangle_\Gamma \subset \Gamma$ is amenable and finitely generated.
	Let~$(F_k)_{k\in\N}$ be a F\o lner sequence of~$\Lambda$ with respect to~$S$; in particular,
	\[ \lim_{k\rightarrow \infty} \frac{|\partial_S F_k|}{|F_k|} = 0.
	\]
	Then, for all~$k\in \N$ we have
	\begin{align*}
		|\partial (F_k \cdot \ucov b)|_1 
			= |F_k \cdot \ucov a + F_k \cdot \ucov c|_1 
			\leq C \cdot |\partial_S F_k| + |F_k| \cdot |\ucov c|_1.
	\end{align*}

        \begin{figure}
          \begin{center}
            \def\colorb{blue}
            \def\colorc{red}
            \def\chainb#1{%
              \begin{scope}[shift={#1}]
                \fill[\colorb,opacity={0.1}] (0,0) rectangle +(2,2);
                \begin{scope}[\colorb]
                  \draw (0,0) -- (2,2);
                  \draw (2,0) -- (0,2);
                  \draw (0,1) -- (2,1);
                  \draw (1,0) -- (1,2);
                \end{scope}
                \draw[\colorc,very thick] (0,0) rectangle +(2,2);
              \end{scope}
            }
            \begin{tikzpicture}[thick,x=0.5cm,y=0.5cm]
              \chainb{(0,0)}
              \draw[\colorb] (1,-1) node {$\widetilde b$};
              \draw[\colorc] (-1,1) node {$\widetilde c$};
              \begin{scope}[shift={(8,0)}]
                \begin{scope}[opacity=0.2]
                \foreach \j in {-1,0,1} {%
                  \foreach \k in {-1,0,1} {%
                    \chainb{(2*\j,2*\k)}
                  }
                }
                \end{scope}
                \draw[\colorb] (1,-3) node {$F_k \cdot \widetilde b$};
                \draw[\colorc] (-3.5,1) node {$F_k \cdot \widetilde c$};
                \draw[\colorc,very thick] (-2,-2) rectangle +(6,6);
              \end{scope}
              \begin{scope}[shift={(18,0)}]
                \begin{scope}
                  \def\colorb{black!10}
                  \def\colorc{black!10}
                \foreach \j in {-1,0,1} {%
                  \foreach \k in {-1,0,1} {%
                    \chainb{(2*\j,2*\k)}
                  }
                }
                \end{scope}
                \begin{scope}[\colorb]
                  \fill[opacity=0.1] (-2,-2) rectangle +(6,6);
                  \foreach \j in {-2,...,4} {%
                    \draw (1,1) -- (\j,4);
                    \draw (1,1) -- (4,\j);
                    \draw (1,1) -- (\j,-2);
                    \draw (1,1) -- (-2,\j);
                  }
                \end{scope}
                \draw[\colorb] (1,-3) node {$\widetilde b_k$};
                \draw[\colorc] (-3.5,1) node {$F_k \cdot \widetilde c$};
                \draw[\colorc,very thick] (-2,-2) rectangle +(6,6);
              \end{scope}
            \end{tikzpicture}
          \end{center}

          \caption{Lifting, translating, and filling chains, schematically}
          \label{fig:filling}
        \end{figure}
        
	\emph{Filling step:} By the filling lemma (Lemma~\ref{lem:filling}), for every~$k \in \N$
	there exists a chain~$\ucov b_k \in C_{n+1}(\ucov M;\Q)$ with~$\partial \ucov b_k = \partial (F_k \cdot \ucov b)$ and
	\[ |\ucov b_k|_{1} \leq |\partial (F_k \cdot \ucov b)|_{1} 
		\leq C \cdot |\partial_S F_k| + |F_k| \cdot |\ucov c|_1.
	\]
	
	\emph{Quotient step:} For all~$k \in \N$ we define
	\[ b_k := \frac{1}{|F_k|} \cdot C_{n+1} (\pi;\Q) (\ucov b_k) \in C_{n+1} (M;\Q).
	\]
	By construction, we have
	\begin{align*}
		\partial b_k = \frac{1}{|F_k|} \cdot C_{n+1}(\pi;\Q) \bigl(\partial (F_k \cdot \ucov b)\bigr)
		             = \frac{1}{|F_k|} \cdot \partial (|F_k| \cdot  b)
								 =\partial b = c
	\end{align*}
	and
	\begin{align*}
		|b_k|_1 \leq \frac{1}{|F_k|} \cdot |\ucov b_k|_1
		        \leq C \cdot \frac{|\partial_S F_k|}{|F_k|} + |\ucov c|_1 
						\leq C \cdot \frac{|\partial_S F_k|}{|F_k|} + |c|_1.
	\end{align*}
	Because~$(F_k)_{k \in \N}$ is a F\o lner sequence, for every~$\epsilon\in \R_{>0}$ 
	there exists~$k \in \N$ such that $\partial b_k =c$ and
	\[ |b_k|_1 \leq (1+\epsilon) \cdot |c|_1,
	\]
	which is slightly stronger than the statement of Proposition~\ref{mainprop:Q}.
\end{proof}

\section{Stable integral UBC for amenable groups}\label{sec:stiubc}

Taking improved F\o lner sequences allows to prove the uniform boundary
condition for the stable integral $\ell^1$-norm (Theorem~\ref{mainthm:stisv}). 

\subsection{Improved F\o lner sequences}
We use the following result of Deninger and Schmidt~\cite[Proposition~5.5]{deningerschmidt}, which is a 
reformulation of the Rokhlin Lemma for amenable residually finite groups of Weiss~\cite[Theorem~1]{weiss}.

\begin{prop}[improved F\o lner sequences]
\label{prop:rokhlinresfin}
	Let~$\Gamma$ be a countable amenable residually finite group
	and let~$S \subset \Gamma$ be a finite subset.
	Then there exists a sequence~$(\Gamma_k)_{k\in \N}$ of 
	decreasing finite index normal subgroups of~$\Gamma$
	with~$\bigcap_{k \in \N} \Gamma_k = \{e\}$
	and a F\o lner sequence~$(F_k)_{k\in\N}$ of~$\Gamma$ 
	with respect to~$S$ such that:
	For all~$k \in \N$ the F\o lner set~$F_k$ is a set 
	of representatives for~$\Gamma/\Gamma_k$.
\end{prop}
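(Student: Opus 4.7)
The statement is a form of the Rokhlin lemma for amenable residually finite groups due to Weiss, and my plan is to reduce it to Weiss's theorem combined with a diagonal construction against a residually finite filtration.

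First, by residual finiteness of~$\Gamma$, I would fix any decreasing chain $(\Gamma'_k)_{k \in \N}$ of finite-index normal subgroups of~$\Gamma$ with~$\bigcap_k \Gamma'_k = \{e\}$; this is just the definition of residual finiteness for a countable group. Second, Weiss's theorem asserts that for every finite set~$S \subset \Gamma$ and every~$\epsilon > 0$ there exists a finite-index normal subgroup~$\Lambda \trianglelefteq \Gamma$ admitting a set of coset representatives for~$\Gamma / \Lambda$ that is $(S,\epsilon)$-invariant. For the construction below I need a mild relative version of this: given any further finite-index normal subgroup~$N \trianglelefteq \Gamma$, one can additionally ensure~$\Lambda \subseteq N$. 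This follows from the basic theorem by an elementary refinement: starting from~$\Lambda$ and an $(S,\epsilon)$-invariant transversal~$T$ for~$\Gamma/\Lambda$, replace~$\Lambda$ by~$\Lambda \cap N$ and replace~$T$ by~$T \cdot D$, where~$D$ is any transversal for~$\Lambda / (\Lambda \cap N)$. A direct check shows that~$T \cdot D$ is a transversal for~$\Gamma / (\Lambda \cap N)$ whose $S$-boundary satisfies $|\partial_S (T \cdot D)| / |T \cdot D| \leq C(N) \cdot \epsilon$ for a constant depending only on~$N$ (via~$D$), so the F\o lner ratio is preserved up to a bounded factor.

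Granting this relative version, I would construct $(\Gamma_k, F_k)$ recursively. Set~$\Gamma_0 := \Gamma$. For~$k \geq 1$, apply the relative Weiss theorem to~$N := \Gamma_{k-1} \cap \Gamma'_k$ with the given generating set~$S$ and a tolerance~$\epsilon_k$ chosen small enough that, after absorbing the constant from the refinement step, the resulting transversal~$F_k$ for~$\Gamma / \Gamma_k$ satisfies $|\partial_S F_k| / |F_k| < 1/k$. By construction $(\Gamma_k)$ is decreasing with $\Gamma_k \subseteq \Gamma'_k$, hence $\bigcap_k \Gamma_k \subseteq \bigcap_k \Gamma'_k = \{e\}$, and $(F_k)$ is a F\o lner sequence with respect to~$S$.

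The genuine difficulty is packaged inside Weiss's theorem itself, whose proof uses Ornstein--Weiss quasi-tilings of amenable groups by F\o lner sets; the subsequent diagonal refinement against the residually finite filtration is standard bookkeeping, with the only subtlety being the control of how the F\o lner boundary behaves under refining a fundamental domain from a subgroup~$\Lambda$ to a smaller subgroup~$\Lambda \cap N$.
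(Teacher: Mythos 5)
The paper does not actually prove this proposition: it is quoted directly as \cite[Proposition~5.5]{deningerschmidt}, which the authors describe as a reformulation of Weiss's Rokhlin lemma for amenable residually finite groups \cite[Theorem~1]{weiss}. So there is no in-paper argument to compare against; your proposal must stand on its own.

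It does not, because the ``relative Weiss'' refinement step has a genuine gap. You pass from a normal subgroup~$\Lambda$ with $(S,\epsilon)$-invariant transversal~$T$ to $\Lambda \cap N$ with transversal~$T\cdot D$, where $D$ is an arbitrary transversal for~$\Lambda/(\Lambda\cap N)$, and claim $|\partial_S(T\cdot D)|/|T\cdot D| \leq C(N)\cdot\epsilon$. To see why this fails, note that for $td \in TD$ and $s\in S$ one has $tds = (ts)\,(s^{-1}ds)$ with $s^{-1}ds \in \Lambda$ by normality. If $t\notin\partial_S T$, then $ts\in T$, and by uniqueness of the $T\cdot\Lambda$ decomposition, $tds\in TD$ \emph{if and only if} $s^{-1}ds \in D$. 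Hence every element $td$ with $t\in T\setminus\partial_S T$ and $d$ in the set
\[
  D_{\mathrm{bad}} := \bigl\{ d\in D \bigm| \exists_{s\in S}\ s^{-1}ds\notin D \bigr\}
\]
lies in $\partial_S(TD)$, so that
\[
  \frac{|\partial_S(TD)|}{|TD|} \ \geq\ (1-\epsilon)\cdot\frac{|D_{\mathrm{bad}}|}{|D|}.
\]
The ratio $|D_{\mathrm{bad}}|/|D|$ is determined by how well the chosen transversal~$D$ respects the conjugation action of~$S$ on~$\Lambda/(\Lambda\cap N)$; it is a fixed quantity that does not tend to zero as $\epsilon\to 0$ and can be large for a generic~$D$ when $\Gamma$ is nonabelian (the abelian case is special because conjugation is trivial, so $D_{\mathrm{bad}}=\emptyset$ -- this is likely why the step looks innocuous). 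Furthermore, your ``constant $C(N)$'' is not a constant: $D$ is a transversal for~$\Lambda/(\Lambda\cap N)$, and $\Lambda$ itself depends on~$\epsilon$ through Weiss's theorem, so there is no uniform bound. The relative statement you want -- that the normal subgroup produced by Weiss's theorem can be forced inside a prescribed finite-index normal subgroup~$N$ -- is true, but establishing it requires going back into the Ornstein--Weiss quasi-tiling machinery used in Weiss's proof rather than a post-hoc enlargement of the output transversal by an arbitrary~$D$.
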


\subsection{Proof of Theorem~\ref{mainthm:stisv}}
	Let~$c\in C_n(M;\Z)$ be a null-homologous cycle, i.e., 
	there exists~$b\in C_{n+1}(M;\Z)$ with~$\partial b = c$.
	Let~$\pi \colon \ucov M \longrightarrow M$ be the universal covering of~$M$
	and~$\Gamma := \pi_1(M)$ the fundamental group of~$M$.
	
	The lifting step is analogous to the lifting
	step in the proof of Proposition~\ref{mainprop:Q}.
	By Proposition~\ref{prop:rokhlinresfin}
	there exists a sequence~$(\Gamma_k)_{k\in \N}$ of decreasing 
	finite index normal subgroups of~$\Gamma$
	with~$\bigcap_{k \in \N} \Gamma_k = \{e\}$
	and a F\o lner sequence~$(F_k)_{k\in\N}$ of~$\Gamma$ with 
	respect to~$S$ (from the lifting step) such that:
	For all~$k \in \N$ the F\o lner set~$F_k$ is a set of 
	representatives for~$\Gamma/\Gamma_k$.
	We construct~$\ucov b_k \in C_{n+1} (\ucov M;\Z)$ for all~$k\in \N$ as 
	in the filling step in the proof of Proposition~\ref{mainprop:Q}.
	
	\emph{Quotient step:} Let~$k \in \N$. We write~$p_k\colon M_k \longrightarrow M$ 
	for the covering of~$M$ associated to~$\Gamma_k < \Gamma$ (of degree~$|F_k|$)
	and~$\pi_k \colon \ucov M \longrightarrow M_k$ for the universal covering of~$M_k$.
	Then, we define
	\[ b_k := C_{n+1} (\pi_k;\Z) (\ucov b_k) \in C_{n+1} (M_k;\Z).
	\]
	In~$C_{n}(M_k;\Z)$ we have
	\begin{align*}
		\partial b_k = C_{n+1}(\pi_k;\Z) \bigl(\partial (F_k \cdot \ucov b)\bigr)
		             = C_{n+1}(\pi_k;\Z) (F_k \cdot \partial \ucov b) =: c_k
	\end{align*}
	and by construction~$c_k$ is the \emph{full~$p_k$-lift of~$c$}, i.e.,
	if $c = \sum_{j=1}^m a_j \cdot \sigma_j$ then
	\begin{align}
	\label{eq:fulllift}
		c_k = \sum_{j=1}^m a_j \cdot \biggl(
						\sum_{\substack{\tau \in \map(\Delta^n,M_k)\\p_k\circ \tau = \sigma_j}} \tau\biggr).
	\end{align}
	We estimate (where $C \in \R_{>0}$ is the constant found in
        the lifting step)
	\begin{align*}
		|b_k|_1 \leq |\ucov b_k|_1
		        \leq C \cdot |\partial_S F_k| + |F_k| \cdot |\ucov c|_1 
						\leq |F_k| \cdot \Bigl( C \cdot \frac{|\partial_S F_k|}{|F_k|} + |c|_1 \Bigr).
	\end{align*}
	Because~$(F_k)_{k \in \N}$ is a F\o lner sequence, for every~$\epsilon\in \R_{>0}$ 
	there exists~$k \in \N$ such that $\partial b_k =c_k$ and
	\[ |b_k|_1 \leq |F_k| \cdot (1 + \epsilon) \cdot |c|_1,
	\]
	which is slightly stronger than the statement of Theorem~\ref{mainthm:stisv}. \qed

\section{Parametrised UBC for tori}\label{sec:parsvab}

In order to prove the uniform boundary condition
for the parametrised $\ell^1$-norm, we will perform the division
by the order of the F\o lner sets on the level of the measure space.
This is done through suitable versions of the Rokhlin lemma.

\subsection{Rokhlin lemma for free abelian groups}

In the abelian case, we will use the following version of the Roklin
lemma~\cite[Theorem~3.1]{conze}.

\begin{thm}[Rokhlin lemma for free abelian groups]\label{thm:rokhlinab}
	Let~$d\in \N_{>0}$ and let~$(X,\mu)$ be an (essentially) free standard $\Z^d$-space.
	Then for every~$k\in \N$ and every~$\epsilon \in \R_{> 0}$
	there exists a measurable subset~$A \subset X$ such that the sets
	\[ (\gamma \cdot A )_{\gamma \in F_k}
	\]
	with~$F_k := \{ 0, \dots, k \}^d \subset \Z^d$ are pairwise disjoint and
	\[ \mu ( X \setminus F_k \cdot A) < \epsilon.
	\]
\end{thm}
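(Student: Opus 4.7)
This is a classical measure-theoretic result, going back at least to Conze~\cite{conze}. My plan would be to reduce the $\Z^d$-statement to the one-dimensional Rokhlin--Halmos lemma by induction on the rank $d$, exploiting the commutativity of $\Z^d$ throughout.

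First, by the ergodic decomposition of $\alpha$, it suffices to treat the case where $\alpha$ is also ergodic: Rokhlin sets for each ergodic component can be assembled into one for $\alpha$ via measurable selection. Since $\alpha$ is essentially free, each standard generator $e_i$ of $\Z^d$ acts by an aperiodic measure-preserving automorphism $T_i$ of $(X,\mu)$, and these $T_i$'s pairwise commute.

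The base case $d = 1$ is the classical Rokhlin--Halmos lemma applied to the aperiodic transformation $T_1$ with tower-height $k+1$ and waste $\epsilon$. For the inductive step $d-1 \Rightarrow d$, apply the inductive hypothesis with waste $\epsilon'$ much smaller than $\epsilon$ to the $\Z^{d-1}$-sub-action generated by $T_1,\dots,T_{d-1}$ and the shape $F_k' := \{0,\dots,k\}^{d-1}$, producing a set $A'$ whose $F_k'$-translates are pairwise disjoint and tile $X$ up to an $\epsilon'$-fraction. Refine within $A'$ along the direction of $T_d$: by commutativity $T_d T_i = T_i T_d$, disjointness of $(\gamma A)_{\gamma \in F_k}$ for $A \subset A'$ reduces to disjointness of $(T_d^j A)_{j=0}^k$ \emph{inside} $A'$. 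A further one-dimensional Rokhlin argument applied to the Kakutani first-return transformation of $T_d$ restricted to $A'$ then produces such $A \subset A'$ covering all but a small fraction of $A'$; combined with the $F_k'$-tiling of $X$ up to $\epsilon'$ and the identity $|F_k| = (k+1) \cdot |F_k'|$, we arrive at $\mu(X \setminus F_k A) < \epsilon$ provided $\epsilon'$ and the secondary waste are chosen small enough.

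The main technical obstacle is that $T_d$ does not preserve $A'$ or the tiled region $F_k' A'$, so the $T_d$-action on $A'$ must be understood as a first-return transformation (Kakutani tower). Ensuring that the Rokhlin tower produced inside $A'$ lifts, via the $F_k'$-translation, to genuinely pairwise disjoint translates of $A$ under the full shape $F_k = F_k' \times \{0,\dots,k\}$ requires the commutativity of the $T_i$'s and a careful accounting of the $T_d$-orbit excursions out of $F_k' A'$; these excursions contribute only an $O(k \cdot \epsilon')$ loss, controllable by choice of $\epsilon'$. A more modern alternative would be to invoke the Ornstein--Weiss quasi-tiling theorem for amenable groups specialised to the F\o{}lner shape $F_k \subset \Z^d$; this yields the statement directly but obscures the role played by the commutativity of $\Z^d$.
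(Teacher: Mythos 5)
The paper does not prove Theorem~\ref{thm:rokhlinab}; it is quoted directly from Conze~\cite[Theorem~3.1]{conze}, so there is no internal argument to compare your plan against. Evaluated on its own, your inductive reduction has a real gap at the step handling the $T_d$-direction. You are right that if one can find $A \subset A'$ with $T_d^j A \subset A'$ for all $j \in \{0,\dots,k\}$ and the $T_d^j A$ pairwise disjoint, then commutativity together with the $F_k'$-disjointness of the $A'$-translates gives the full $F_k$-disjointness. The difficulty is that the Kakutani first-return transformation~$R$ of~$T_d$ to~$A'$ does not produce such an~$A$: $R$ acts by $x \mapsto T_d^{n(x)}x$ with a variable return time~$n(x) \geq 1$, so a Rokhlin tower $A, RA, \dots, R^kA$ for~$R$ bears no relation to the genuine translates $T_d^j A$, which will typically leave~$A'$ already for~$j=1$. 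Reading the $T_d$-excursions out of~$F_k' \cdot A'$ as an $O(k \varepsilon')$ measure loss misdiagnoses the problem: the excursions damage disjointness of the cross-terms $\gamma'_1 T_d^{j_1} A$ and $\gamma'_2 T_d^{j_2} A$ with $\gamma'_1 \neq \gamma'_2$ in~$F_k'$, not merely the waste estimate, so shrinking~$\varepsilon'$ does not repair the induction. To close the argument one needs a further tool -- for instance a Katznelson--Weiss type lemma producing a $T_d$-Rokhlin base that is approximately invariant under $T_1,\dots,T_{d-1}$, or Conze's own construction -- and that tool is not part of your plan.

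Your Ornstein--Weiss fallback is more robust and is, in effect, what the paper uses for general amenable groups in Theorem~\ref{thm:ow}. Note, however, that Ornstein--Weiss yields $\varepsilon$-quasi-tilings by finitely many approximate tile shapes, not a single Rokhlin tower over the one rectangular base $F_k = \{0,\dots,k\}^d$; passing from a quasi-tiling to the exact single-shape statement of Theorem~\ref{thm:rokhlinab} would itself require an argument. In short, your plan correctly isolates where the difficulty lies, but the proposed mechanism for overcoming it does not work as stated.
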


\subsection{Proof of Theorem~\ref{mainthm:parsvab}}
	Let~$c\in C_n(M;\alpha)$ be a null-homologous cycle, i.e., 
	there exists~$b\in C_{n+1}(M;\alpha)$ with~$\partial b = c$.
	We will use~$b$ to find a more efficient filling of~$c$.
	Let
	\[ p \colon L^\infty(X;\Z)\underset{\Z}{\otimes} C_{*}(\widetilde{M};\Z) \longrightarrow L^\infty(X;\Z)\underset{\Z\Gamma}{\otimes} C_{*}(\widetilde{M};\Z)
	\]
	be the canonical projection.
	
	\emph{Lifting step:} Let~$\ucov c$ be a $p$-lift of~$c$
	with $|\ucov c|_1 \leq |c|_1$ (e.g., by lifting~$c$ simplex by simplex)
	and let $\ucov b $ be a $p$-lift of~$b$.
	We now consider
	\[ \ucov a := \partial \ucov b -\ucov c \in L^\infty(X;\Z)\underset{\Z}{\otimes} C_{n}(\widetilde{M};\Z).
	\]
	Then~$\ucov a$ is a $p$-lift of $0 \in C_n(M;\alpha)$.
	By the lifting lemma (Lemma~\ref{lem:lifting}), there exist~$C \in \R_{>0}$
	and a finite subset~$S \subset \Gamma$ such that the following holds:
	for all finite sets~$F \subset \Gamma$ we have
	\[ |F \cdot \ucov a|_1 \leq C \cdot |\partial_S F|.
	\]
        For~$k \in \N$ we define
        \[ F_k := \{0,\dots, k\}^d \subset \Gamma
        \]
        via an isomorphism~$\Gamma \cong \Z^d$. 
	Then~$(F_k^{-1})_{k \in \N}$ is a F\o lner sequence for~$\Gamma$ with respect to~$S$ in the sense that 
	\[ \lim_{k\rightarrow \infty} \frac{|\partial_S (F_k^{-1})|}{|F_k|} = 0.
	\]
	Then, for all~$k\in \N$ we have
	\begin{align*}
		|\partial (F_k^{-1} \cdot \ucov b)|_1 &= |F_k^{-1} \cdot \ucov a + F_k^{-1} \cdot \ucov c|_1  \leq C \cdot |\partial_S F_k^{-1}| + |F_k| \cdot |\ucov c|_1.
	\end{align*}
	
	Let~$k \in \N$ and~$\epsilon \in \R_{>0}$. By the Rokhlin lemma for 
	free abelian groups (Theorem~\ref{thm:rokhlinab}), 
	there exists a measurable subset~$A_k \subset X$ such that the sets
	$(\gamma \cdot A_k )_{\gamma \in F_k}
	$
	are pairwise disjoint and the complement
	\[ B_k := X \setminus F_k \cdot A_k
	\]
	has measure less than~$\epsilon$.
	
	\emph{Quotient step:} Since~$L^\infty (X; \Z)$ is a $L^\infty(X;\Z)$-$\Z$-bimodule,  
        it follows 
	that
	\[ L^\infty (X; \Z) \underset{\Z}{\otimes} C_n(\ucov M;\Z)
	\]
	is a left-$L^\infty (X; \Z)$-module. Therefore, we can define
	\[ \ucov b_{k,\gamma} := \chi_{\gamma \cdot A_k} 
	      \cdot (F_k^{-1} \cdot \ucov b)
				\in L^\infty (X;\Z) \underset{\Z}{\otimes} C_n(\ucov M;\Z)
 	\]
	for all~$\gamma \in F_k$. Then we have
	\begin{align*}
	  F_k^{-1} \cdot \ucov b &= \chi_X \cdot (F_k^{-1} \cdot \ucov b)
          = \sum_{\gamma \in F_k} \chi_{\gamma \cdot A_k} 
											   \cdot (F_k^{-1} \cdot \ucov b)
												+ \chi_{B_k} \cdot (F_k^{-1} \cdot \ucov b)\\
											&= \sum_{\gamma \in F_k} \ucov b_{k,\gamma} +  \chi_{B_k} \cdot (F_k^{-1} \cdot \ucov b).
	\end{align*}
	Because the chains~$(\partial \ucov b_{k,\gamma})_{\gamma \in F_k}$ have pairwise disjoint support, we obtain 
	\[ \biggl| \sum_{\gamma \in F_k} \partial \ucov b_{k,\gamma} \biggr|_1 = \sum_{\gamma \in F_k} |\partial \ucov b_{k,\gamma}|_1
	\]
	and by the box principle it follows that there exists~$\gamma_0 \in F_k$ with
        \[ |\partial \widetilde b_{k,\gamma_0}|_1
           \leq
           \frac1{|F_k|} \cdot \bigl( |F_k^{-1} \cdot \partial \widetilde b|_1 + | \chi_{B_k} \cdot (F_k^{-1} \cdot \partial \widetilde b)|_1 \bigr).
        \]
        We write~$\ucov b = \sum_{j=1}^m a_j \otimes \sigma_j \in
        L^\infty (X;\Z) \underset{\Z}{\otimes} C_{n+1}(\ucov M;\Z)$ in
        reduced form and set
        \[ |\widetilde b|_{1,\infty} := \sum_{j=1}^m |a_j|_\infty.
        \]
        Then
	\begin{align*}
		|\partial \ucov b_{k,\gamma_0}|_1 &\leq \frac{1}{|F_k|} \cdot \bigl( | F_k^{-1} \cdot \partial \ucov b|_1 + \mu(B_k) \cdot |F_k| \cdot (n+2) \cdot |\ucov b|_{1,\infty} \bigr)\\
		   &\leq C \cdot \frac{|\partial_S (F_k^{-1})|}{|F_k|} + |c|_1 + \epsilon \cdot (n+2) \cdot |\ucov b|_{1,\infty}.
	\end{align*}
	
	\emph{Filling step:} By the filling lemma (Lemma~\ref{lem:filling}),
	there exists a parametrised chain~$\ucov b_{k,\gamma_0}' \in L^\infty (X;\Z) \underset{\Z}{\otimes} C_{n+1}(\ucov M;\Z)$ 
	with~$\partial \ucov b_{k,\gamma_0}' =  \partial \ucov b_{k,\gamma_0}$ and
	\[ |\ucov b_{k,\gamma_0}'|_{1} 
	   \leq |\partial \ucov b_{k,\gamma_0}|_{1}
	   \leq C \cdot\frac{|\partial_S (F_k^{-1})|}{|F_k|} + |c|_1 + \epsilon \cdot (n+2) \cdot |\ucov b|_{1,\infty}.
	\]
	Let
	\[ b_{k,\gamma_0} := p(\ucov b_{k,\gamma_0}) \in C_{n+1} (M;\alpha) 
		\quad \text{and} \quad 
		 b_{k,\gamma_0}' :=  p(\ucov b_{k,\gamma_0}') \in C_{n+1} (M;\alpha). 
	\]
	 
	Because $\Gamma$ is abelian and $\chi_{\gamma\cdot A_k}= \chi_{A_k} \cdot \gamma^{-1}$
	holds for all $\gamma \in \Gamma$, we obtain in~$C_n(M;\alpha)$ that 
	\begin{align*}
		\partial b_{k,\gamma_0}' &= \partial b_{k,\gamma_0}\\
		                &= \sum_{j=1}^m \sum_{\gamma \in F_k} \chi_{\gamma_0 \cdot A_k} \cdot 
											( a_j \cdot \gamma )  \otimes  \gamma^{-1} \cdot \partial \sigma_j\\
										&= \sum_{j=1}^m \sum_{\gamma \in F_k} \chi_{A_k} \cdot 
											( a_j \cdot \gamma \cdot \gamma_0 ) \otimes
											\gamma_0^{-1} \cdot \gamma^{-1} \cdot \partial \sigma_j\\
										&= \sum_{j=1}^m \sum_{\gamma \in F_k} \chi_{A_k} \cdot 
											( a_j \cdot \gamma_0 \cdot \gamma ) \otimes 
											\gamma^{-1} \cdot \gamma_0^{-1} \cdot \partial \sigma_j\\
										&= \sum_{j=1}^m \sum_{\gamma \in F_k} \chi_{\gamma \cdot A_k} \cdot 
											( a_j \cdot \gamma_0 ) \otimes \gamma_0^{-1} \cdot \partial \sigma_j,
	\end{align*}
  which almost looks like~$c$. 
	We define the correction term 
	\begin{align*}
		r_k := \sum_{j=1}^m \chi_{\gamma_0 \cdot B_k} \cdot a_j \otimes \sigma_j 
		    = \sum_{j=1}^m \chi_{B_k} \cdot (a_j \cdot \gamma_0) \otimes \gamma_0^{-1} \cdot \sigma_j
	\end{align*}
	and observe that the following holds in~$C_n(M;\alpha)$:
	\begin{align*}
		\partial ( b_{k,\gamma_0}' + r_k) 
		= \sum_{j=1}^m \chi_{X} \cdot (a_j \cdot \gamma_0) \otimes \gamma_0^{-1} \cdot \partial \sigma_j
		= \sum_{j=1}^m a_j \otimes \partial \sigma_j
		=\partial b
		= c.
	\end{align*}
	Finally, we have
	\begin{align*}
		|b_{k,\gamma_0}'+r_k|_1 &\leq |b_{k,\gamma_0}'|_1 + |r_k|_1\\
		                       &\leq C \cdot \frac{|\partial_S (F_k^{-1})|}{|F_k|} + |c|_1 + \epsilon \cdot (n+2) \cdot |\ucov b|_{1,\infty} + \epsilon \cdot |\ucov b|_{1,\infty}\\
													 &\leq C \cdot \frac{|\partial_S (F_k^{-1})|}{|F_k|} + |c|_1 + \epsilon \cdot (n + 3) \cdot |\ucov b|_{1,\infty}.
	\end{align*}
	Because~$(F_k^{-1})_{k \in \N}$ is a F\o lner sequence for~$\Gamma$ with respect to~$S$, 
	for~$k \rightarrow \infty$ and~$\epsilon \rightarrow 0$ 
	the chains~$b_{k,\gamma_0}'+r_k$ are efficient fillings of~$c$.\qed


\section{Mixed UBC for amenable groups}\label{sec:parsv}

\subsection{Ornstein-Weiss}
We need the following modification~\cite[Theorem~5.2]{sauer} of the generalized Rokhlin lemma of Ornstein-Weiss~\cite[Theorem~5]{ow}:

\begin{thm}[Ornstein-Weiss]
	\label{thm:ow}
	Let~$\Gamma$ be a countable amenable group, let~$(X,\mu)$ be an (essentially) free standard $\Gamma$-space.
	Then for every finite subset~$S \subset \Gamma$ and every~$\epsilon \in \R_{>0}$ 
	there exists an~$N \in \N$, finite subsets~$F_1, \dots, F_N \subset \Gamma$, and 
	Borel subsets~$A_1, \dots, A_N \subset X$ such that the following holds:
	\begin{itemize}
	        \item For every~$k\in \{ 1,\dots, N \}$ we have
                  \[ \frac{|\partial_S (F_k^{-1})|}{|F_k|}
                     = \frac{\bigl| \{\gamma \in F_k \bigm| \exists_{s \in S^{-1}}\ s \cdot \gamma \not\in F_k \}\bigr|}{|F_k|}< \epsilon.\]
		\item For every~$k \in \{ 1, \dots, N \}$ the sets~$\gamma \cdot A_k$ with~$\gamma \in F_k$
					are pairwise disjoint.
		\item The sets~$F_k \cdot A_k$ with~$k \in \{ 1, \dots, N \}$ are pairwise disjoint.
		\item The complement~$B := X \setminus \bigcup_{k=1}^N F_k \cdot A_k$ has measure less than~$\epsilon$.
	\end{itemize}
\end{thm}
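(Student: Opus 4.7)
The plan is to derive the statement from the classical Ornstein--Weiss $\epsilon$-quasi-tiling theorem for essentially free actions of countable amenable groups, by a cosmetic translation between different notions of $\epsilon$-invariance. The classical Ornstein--Weiss result asserts that, for every finite symmetric $S' \subset \Gamma$ and every $\epsilon' > 0$, there exist finite subsets $F_1,\dots,F_N \subset \Gamma$ satisfying the standard left Følner condition $|S'F_k \setminus F_k|/|F_k| < \epsilon'$, together with Borel centres $A_1,\dots,A_N \subset X$ verifying the three disjointness/covering clauses in our statement (pairwise disjointness of $\{\gamma A_k\}_{\gamma \in F_k}$, pairwise disjointness of the $F_k \cdot A_k$, and near-coverage of $X$). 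This is the substantive dynamical content, and I would quote it as a black box; its own proof proceeds by a greedy packing scheme combined with a Rokhlin-type induction, but none of that machinery needs to be reproduced here.

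The remaining task is to rewrite the Følner condition in the form demanded. Unpacking the definition, an element $\gamma \in F^{-1}$ satisfies $\gamma s \notin F^{-1}$ (for some $s \in S$) if and only if, writing $\gamma = \delta^{-1}$ with $\delta \in F$, one has $s^{-1}\delta \notin F$; hence $\partial_S(F^{-1})$ is in bijection with $F \setminus \bigcap_{s \in S} sF$. A one-line union bound gives
\[ \bigl|\partial_S(F^{-1})\bigr| \ \leq\ \sum_{s \in S} |F \setminus sF| \ =\ \sum_{s \in S} |sF \setminus F|. \]
Consequently, feeding the Ornstein--Weiss theorem the parameters $S' := S \cup S^{-1}$ and $\epsilon' := \epsilon / |S|$ produces sets $F_k$ that automatically satisfy $|\partial_S(F_k^{-1})|/|F_k| < \epsilon$, while the disjointness and measure-covering clauses carry over verbatim. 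This yields the four assertions simultaneously.

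The main obstacle is essentially notational and will require care rather than new ideas: one has to keep straight the side of the $\Gamma$-action, the direction of the boundary operator $\partial_S$ (right-sided, as defined in the paper), and the interaction of both with the inversion $F \leftrightarrow F^{-1}$. Most ergodic-theoretic references formulate Følner invariance on the left, so the dictionary above is unavoidable; once it is fixed, the theorem follows at once from the quoted Ornstein--Weiss tiling theorem, with no further ergodic-theoretic or combinatorial input.
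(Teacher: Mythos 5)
Your proposal is correct and takes essentially the same route as the paper: the paper gives no proof of this statement either, but quotes it as Sauer's formulation (his Theorem~5.2) of the Ornstein--Weiss Rokhlin lemma, and your dictionary identifying $\partial_S(F^{-1})$ with $F\setminus\bigcap_{s\in S}sF$ and the union bound $|\partial_S(F^{-1})|\leq\sum_{s\in S}|sF\setminus F|$ correctly convert the usual left F\o lner invariance into the stated condition on the sets $F_k^{-1}$. The only point to watch is that the black box you invoke must be the exact-disjointness (Rokhlin-lemma) form of Ornstein--Weiss for essentially free actions -- precisely what Sauer's Theorem~5.2 records -- and not merely the $\epsilon$-disjoint quasi-tiling version, since upgrading $\epsilon$-disjoint towers to genuinely disjoint ones is not addressed in your argument.
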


\subsection{Proof of Theorem~\ref{mainthm:parsv}}
	Let~$c\in C_n(M;\Z)$ be a null-homologous cycle, i.e., 
	there exists~$b\in C_{n+1}(M;\Z)$ with~$\partial b = c$.
	We will use~$b$ to find a more efficient filling of~$c$.
	Let~$\pi \colon \ucov M \longrightarrow M$ 
	be the universal covering of~$M$.
	
	\emph{Lifting step:} Let~$\ucov c$ be a $\pi$-lift of~$c$
	with $|\ucov c|_1 \leq |c|_1$ (e.g., by lifting~$c$ simplex by simplex)
	and let $\ucov b $ be a $\pi$-lift of~$b$.
	We now consider
	\[ \ucov a := \partial \ucov b -\ucov c \in C_{n}(\ucov M;\Z).
	\]
	Then~$\ucov a$ is a $\pi$-lift of $0 \in C_n(M;\Z)$.
	By the lifting lemma (Lemma~\ref{lem:lifting}) there exist~$C \in \R_{>0}$
	and a finite subset~$S \subset \Gamma$ such that the following holds:
	For all finite sets~$F \subset \Gamma$ we have
	\[ |F \cdot \ucov a|_1 \leq C \cdot |\partial_S F|.
	\]
	The group~$\Lambda := \langle S \rangle_\Gamma \subset \Gamma$ is amenable and finitely generated.
	
	Let~$\epsilon \in \R_{>0}$. We apply Theorem~\ref{thm:ow} to the (essentially) free standard
	$\Lambda$-space~$\res^\Gamma_\Lambda \alpha = \Lambda \curvearrowright (X,\mu)$: Thus, 
	there exists an~$N \in \N$, finite subsets~$F_1, \dots, F_N \subset \Lambda$ and 
	Borel subsets~$A_1, \dots, A_N, B \subset X$ with the properties listed in Theorem~\ref{thm:ow}.

	\emph{Filling step:} By the filling lemma (Lemma~\ref{lem:filling}),
	for all~$k\in \{1, \dots, N\}$ there exists~$\ucov b_{k} \in C_{n+1}(\ucov M;\Z)$ 
	with~$\partial \ucov b_{k} = \partial (F_k^{-1} \cdot \ucov b) = F_k^{-1} \cdot \partial \ucov b$ and
	\[ |\ucov b_{k}|_{1} 
	   \leq |F_k^{-1} \cdot \ucov c + F_k^{-1} \cdot \ucov a|_{1} 
	   \leq |F_k| \cdot |c|_1 + C \cdot |\partial_S (F_k^{-1})|
		 \leq |F_k| \cdot |c|_1 + C \cdot \epsilon \cdot |F_k|.
	\]
	
	\emph{Quotient step:} We define
	\[ b_\varepsilon := \sum_{k=1}^N \chi_{A_k} \otimes \ucov b_k + \chi_B \otimes \ucov b \in C_{n+1} (M; \alpha).
	\]
	Then, in~$C_n (M;\alpha)$ the following computation holds
	\begin{align*}
		\partial  b_\varepsilon &= \sum_{k=1}^N \chi_{A_k} \otimes \partial \ucov b_k + \chi_B \otimes \partial \ucov b \\
								&= \sum_{k=1}^N \chi_{A_k} \otimes F_k^{-1} \cdot \partial \ucov b + \chi_B \otimes \partial \ucov b \\
								&= \sum_{k=1}^N \sum_{\gamma \in F_k} \chi_{\gamma \cdot A_k} \otimes \partial \ucov b + \chi_B \otimes \partial \ucov b \\
								&= 1 \otimes \partial \ucov b
	\end{align*}
	because $X = B \cup \bigl( \bigcup_{k = 1}^N \bigcup_{\gamma \in F_k} \gamma \cdot A_k\bigr)$
	is a disjoint union.
        Since $\partial \widetilde b$ is a $\pi$-lift of~$c$,
        we obtain $\partial \widetilde b_\varepsilon = c$, if we
        view~$c$ as a chain in~$C_n(M;\Z) \subset C_n(M;\alpha)$
        via the inclusion~$\Z \hookrightarrow L^\infty(\alpha;\Z)$
        as constant functions. 
	Finally, we have
	\begin{align*}
		|b_\varepsilon|_1 &\leq \sum_{k=1}^N \mu(A_k) \cdot |\ucov b_k|_1 + \mu(B) \cdot |\ucov b|_1 \\
						&\leq \sum_{k=1}^N \mu(A_k) \cdot \bigl(|F_k| \cdot |c|_1 + C \cdot \epsilon \cdot |F_k|\bigr) + \mu(B) \cdot |\ucov b|_1 \\
						&= \bigl(|c|_1 + C \cdot \epsilon\bigr) \cdot \sum_{k=1}^N \mu(A_k) \cdot |F_k| + \mu(B) \cdot |\ucov b|_1 \\
						&= \bigl(|c|_1 + C \cdot \epsilon\bigr) \cdot \mu \Bigl( \bigcup_{k=1}^N F_k \cdot A_k \Bigr) + \mu(B) \cdot |\ucov b|_1 \\
						&\leq |c|_1 + C \cdot \epsilon + \epsilon \cdot |\ucov b|_1.
	\end{align*}
        Therefore, 
	for~$\epsilon \rightarrow 0$ the chains~$b_\varepsilon \in C_{n+1} (M;\alpha)$ are efficient fillings of the chain~$c \in C_n (M;\Z) \subset C_n (M;\alpha)$.\qed

\section{Integral UBC}\label{sec:ubcz}

We will now briefly discuss the uniform boundary condition for
the integral singular chain complex.

\subsection{The integral uniform boundary condition for the circle}

We start with a simple example, namely the circle (and degree~$1$).

\begin{prop}[integral \UBC 1 for the circle]
  The chain complex~$C_*(S^1;\Z)$ satisfies \UBC 1. More precisely:
  If $c \in C_1(S^1;\Z)$ is a null-homologous cycle, then there
  exists a filling chain~$c \in C_2(S^1;\Z)$ satisfying
  \[ \partial b = c
     \quad\text{and}\quad
     |b|_1 \leq 3 \cdot |c|_1.
  \]
\end{prop}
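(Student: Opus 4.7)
The plan is to lift $c$ to the contractible universal covering $\pi\colon \R \to S^1$, correct the lift to an honest cycle in $\R$, fill efficiently there using the filling lemma, and descend. The constant $3$ will come out as $|c|_1 + |\widetilde d|_1 \leq |c|_1 + 2|c|_1$, where $\widetilde d$ is the correction chain.

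\emph{Step~1 (Lifting).} Write $c = \sum_i n_i \sigma_i \in C_1(S^1;\Z)$ in reduced form and lift each $\sigma_i$ along $\pi$ to $\widetilde \sigma_i \colon [0,1] \to \R$, so that $\widetilde c := \sum_i n_i \widetilde \sigma_i \in C_1(\R;\Z)$ satisfies $|\widetilde c|_1 = |c|_1$. Usually $\widetilde c$ is not a cycle; however $\pi_*(\partial \widetilde c) = \partial c = 0$, so $\partial \widetilde c$ lies in the kernel subcomplex $K_* := \ker \pi_* \subset C_*(\R;\Z)$.

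\emph{Step~2 (Correction).} The short exact sequence
\[ 0 \longrightarrow K_* \longrightarrow C_*(\R;\Z) \longrightarrow C_*(S^1;\Z) \longrightarrow 0
\]
yields a connecting homomorphism identifying $H_0(K_*) \cong H_1(S^1;\Z) = \Z$, under which $[\partial \widetilde c]$ corresponds to $[c] = 0$. Hence $\partial \widetilde c \in \partial K_1$, so there is $\widetilde d \in K_1$ with $\partial \widetilde d = -\partial \widetilde c$. I construct $\widetilde d$ explicitly as follows: pair up positive and negative point masses of the 0-chain $\partial \widetilde c$ on $\R$ using 1-simplices of the form $\widetilde \rho - k \cdot \widetilde \rho$ (difference of a 1-simplex and its $\Z$-translate, which automatically lies in $K_1$). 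Each such generator contributes, on the level of $\partial$, the pattern $(e_q - e_p) - (e_{q+k} - e_{p+k})$; coupling the pairings across the fibres of $\pi$ via the vanishing total winding $[c]=0$ lets me realise all the required cancellations, with total norm bounded by $|\widetilde d|_1 \leq |\partial \widetilde c|_1 \leq 2|c|_1$.

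\emph{Step~3 (Fill in $\R$ and descend).} Now $\widetilde c + \widetilde d \in C_1(\R;\Z)$ is a 1-cycle in the contractible space $\R$. The filling lemma (Lemma~\ref{lem:filling}), applied to $M = \R$ and $A = \Z$, produces $\widetilde b \in C_2(\R;\Z)$ with $\partial \widetilde b = \widetilde c + \widetilde d$ and $|\widetilde b|_1 \leq |\widetilde c + \widetilde d|_1 \leq |c|_1 + 2|c|_1 = 3|c|_1$. Projecting, $b := \pi_*(\widetilde b) \in C_2(S^1;\Z)$ satisfies $\partial b = \pi_*(\widetilde c + \widetilde d) = c + 0 = c$ since $\widetilde d \in K_1$, and $|b|_1 \leq |\widetilde b|_1 \leq 3|c|_1$.

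The delicate point is Step~2: producing $\widetilde d$ \emph{inside} $K_1$, not merely in $C_1(\R;\Z)$, with a norm bound linear in $|c|_1$. A naive fibrewise partial-sum construction gives a correction chain whose projection to $S^1$ is a nontrivial $\Z$-linear combination of standard loops at various basepoints, rather than the zero chain; the hypothesis that $c$ is null-homologous (equivalently, that the total winding of $c$ vanishes) is what permits the fibrewise contributions to be coupled into a chain which genuinely projects to zero, without inflating the norm.
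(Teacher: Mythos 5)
Your strategy is genuinely different from the paper's. The paper normalises $c$ into a concatenation of based loops (cost $\le 2|c|_1$), then applies the Hurewicz theorem to produce a null-homotopy $F\colon D^2\to S^1$, triangulates $D^2$ into $|c|_1$ sectors, and pushes this forward to a filling of cost $\le |c|_1$. Your plan instead lifts to $\R$, corrects the lift to a cycle \emph{inside} $\ker\pi_*$, fills with Lemma~\ref{lem:filling}, and projects. Steps~1 and~3 of your argument are fine: the filling lemma does apply to the cycle $\widetilde c+\widetilde d$ (the chain contraction $h$ constructed in its proof already gives $\widetilde b:=h(\widetilde c+\widetilde d)$ with $\partial\widetilde b=\widetilde c+\widetilde d$ and $|\widetilde b|_1\le |\widetilde c+\widetilde d|_1$), and $\pi_*$ is norm-nonincreasing and kills $\widetilde d$.

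The gap is in Step~2, and it is not merely a detail you can wave at. You assert that pairing the point masses of $\partial\widetilde c$ by generators $(1-T^k)\widetilde\rho\in K_1$ yields $|\widetilde d|_1\le|\partial\widetilde c|_1\le 2|c|_1$, but you never say which pairing, and the estimate is false for the obvious candidates. Concretely: factor $K_*=(1-T)C_*(\R;\Z)$ and write $\partial\widetilde c=(1-T)w$. Any $\widetilde d\in K_1$ with $\partial\widetilde d=-\partial\widetilde c$ must be $(1-T)e$ with $\partial e=-w$, and the filling lemma only gives $|\widetilde d|_1\le 2|e|_1\le 2|w|_1$. But $|w|_1$ is controlled by winding numbers, not by $|c|_1$: take $c=\sigma-\tau$ with $\sigma,\tau$ two loops at distinct basepoints, each winding $N$ times; then $|c|_1=2$ while $|w|_1=2N$, so this route gives a bound of order $N$, not $4$. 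The ``telescoping plus pairing'' reading of your description has the same defect: each unit-winding step contributes a generator, and the number of units is $\sum_j|w_j|$ over the Eulerian cycles of $c$, again unbounded in terms of $|c|_1$. In the example above one \emph{can} achieve $|\widetilde d|_1=2$, but only by exploiting a common factor $T^N-1$ --- a cancellation your sketch does not build in. To close the gap you would need an explicit combinatorial scheme (some Euclidean-algorithm-style splitting and matching of the $(T^{w_j}-1)e_{a_j}$ terms) together with a proof that the number of generators used is at most $|c|_1$; neither is provided, and the bound $|\widetilde d|_1\le 2|c|_1$ that the constant $3$ requires remains unproven. The paper's Hurewicz argument sidesteps this entirely: the degree-one identification $\pi_1(S^1)\cong H_1(S^1;\Z)$ produces the filling disc in one stroke, with no correction chain to control.
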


\begin{proof}
  Let $c \in C_1(S^1;\Z)$ be a null-homologous cycle. We use a
  Hurewicz argument to construct an efficient filling
  of~$c$. Therefore, it is convenient to normalise~$c$ as follows:
  \begin{itemize}
  \item Using the fact that $S^1$ is path-connected, we can find
    $b_+ \in C_2(S^1;\Z)$ with $|b_+|_1 \leq 2 \cdot |c|_1$
    such that
    \[ c_+ := c - \partial b \in C_1(S^1;\Z)
    \]
    satisfies~$|c_+|_1 \leq |c|_1$ and such that every singular simplex
    in~$c_+$ maps the boundary of~$\Delta^1$ to the basepoint of~$S^1$~\cite[Chapter~9.5]{tomdieck}.
  \item Splitting the integral coefficients of the chain~$c_+$ into unit steps,
    we write~$c_+ = \sum_{j=1}^m a_j \cdot \sigma_j$ with
    $a_j \in \{-1,1\}$ for all~$j \in \{1,\dots,m\}$ and $|c_+|_1 =
    m$.
  \end{itemize}
  We now use the connection between the fundamental group and~$H_1(S^1;\Z)$:
  Let us consider the based loop
  \[ f := \sigma_1^{a_j} * \dots * \sigma_m^{a_m} \colon S^1 \longrightarrow S^1
  \]
  (using the equidistant partition of~$S^1$ into $m$ segments; without
  loss of generality we may assume~$m \neq 0$).
  Here, if $a_j = -1$, the symbol~$\sigma_j^{a_j}$ denotes the
  reversed loop of~$\sigma_j$.  Because $[c_+] = [c] = 0 \in H_1(S^1;\Z)$,
  we obtain $[f]_* = 0 \in \pi_1(S^1)$ from the Hurewicz theorem. Thus,
  there exists a continuous map~$F \colon D^2 \longrightarrow S^1$ extending~$f$,
  i.e., $F|_{\partial D^2} = f$.

  The filling~$F$ of~$f$ leads to a filling of~$c_+$: Let $j \in
  \{1,\dots,m\}$.  Then $\tau_j \colon \Delta^2 \longrightarrow S^1$
  denotes the restriction of~$F$ to the $j$-th segment of~$D^2$
	(Figure~\ref{fig:ZUBCS1}); if
  $a_j = 1$ we take the positive orientation, if $a_j = -1$ we take
  the negative orientation (see Figure~\ref{fig:ZUBCS1} for the exact
  orientation). We then set
  \[ b_+ := \sum_{j=1}^m a_j \cdot \tau_j \in C_2(S^1;\Z).
  \]
  A straightforward computation shows that
  \[ \partial b_+ = \sum_{j=1}^m a_j \cdot \partial \tau_j
                  = \sum_{j=1}^m a_j \cdot \sigma_j = c_+
  \]
  (because the ``inner'' terms cancel) and 
  $|b_+|_1 \leq m = |c_+|_1 \leq |c|_1.
  $ 
  Combining $b$ and $b_+$ gives the desired filling of~$c$.
\end{proof}

\begin{figure}
  \begin{center}
    \begin{tikzpicture}[thick]
      \begin{scope}[black!50]
        \draw (0,0) circle (1);
        \draw[->] (1.3,0) arc (0:30:1.3);
        \draw (15:1.5) node {$f$};
        \draw (0,-0.5) node {$F$};
      \end{scope}
      \begin{scope}[blue]
        \fill[blue!10] (0,0) -- (30:1) arc (30:80:1) -- (80:1) -- cycle;
        \draw[->] (0,0) -- (30:1);
        \draw[->] (0,0) -- (80:1);
        \draw (105:0.5) node {$\tau_j$};
        \draw (55:0.2) node {\tiny $0$};
        \draw (37:0.85) node {\tiny $1$};
        \draw (73:0.85) node {\tiny $2$};
      \end{scope}
      \begin{scope}[red]
        \draw[->] (30:1) arc (30:80:1);
        \draw (55:1.3) node {$\sigma_j$};
      \end{scope}
      \draw (0,-2) node {if $a_j = 1$};
      \begin{scope}[shift={(5,0)}]
      \begin{scope}[black!50]
        \draw (0,0) circle (1);
        \draw[->] (1.3,0) arc (0:30:1.3);
        \draw (15:1.5) node {$f$};
        \draw (0,-0.5) node {$F$};
      \end{scope}
      \begin{scope}[blue]
        \fill[blue!10] (0,0) -- (30:1) arc (30:80:1) -- (80:1) -- cycle;
        \draw[->] (0,0) -- (30:1);
        \draw[->] (0,0) -- (80:1);
        \draw (105:0.5) node {$\tau_j$};
        \draw (55:0.2) node {\tiny $0$};
        \draw (37:0.85) node {\tiny $2$};
        \draw (73:0.85) node {\tiny $1$};
      \end{scope}
      \begin{scope}[red]
        \draw[->] (80:1) arc (80:30:1);
        \draw (55:1.3) node {$\sigma_j$};
      \end{scope}
      \draw (0,-2) node {if $a_j = -1$};
      \end{scope}
    \end{tikzpicture}
  \end{center}
  
  \caption{Construction of the filling chain}
  \label{fig:ZUBCS1}
\end{figure}
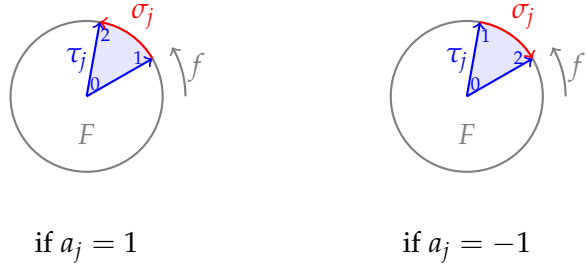

\begin{rem}
  The same Hurewicz argument also can be used to show the following:
  \begin{enumerate}
  \item Let $M$ be a topological space such that the fundamental group
    of every path-connected component is abelian. Then $C_*(M;\Z)$
    satisfies \UBC 1.
  \item Let $n \in \N_{\geq 2}$ and let $M$ be an $(n-1)$-connected topological
    space. Then $C_*(M;\Z)$ satisfies \UBC n.
  \end{enumerate}
\end{rem}


\subsection{Discussion of integral UBC for general spaces}

However, for more general spaces and degrees, the situation gets
more involved and the general picture is unknown.

\begin{prop}
  Let $n \in \N_{\geq 3}$. Then there exists a simply connected space~$M$
  such that $C_*(M;\Z)$ does \emph{not} satisfy \UBC n.
\end{prop}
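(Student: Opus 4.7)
Since every simply connected space has vanishing bounded cohomology, the Matsumoto--Morita criterion (Theorem~\ref{thm:ubcmm}) forces~$C_*(M;\R)$ to satisfy~$n$-$\ubc$ whenever~$M$ is simply connected; so the failure of~$n$-$\ubc$ over~$\Z$ must arise from a purely integral obstruction invisible on real chains. My plan is to exhibit a family of integer null-homologous $n$-cycles of bounded $\ell^1$-norm whose integer filling norm grows super-linearly, using Hopf-type invariants.

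For~$n = 3$, I would take~$M$ to be~$S^2$ or~$\C P^2 = S^2 \cup_\eta e^4$. Let~$\alpha_k \colon S^2 \to S^2$ be a degree-$k$ map, so that~$f_k := \alpha_k \circ \eta \colon S^3 \to S^2$ has Hopf invariant~$k^2$ (by the formula~$H(\phi \circ g) = (\deg\phi)^2 \cdot H(g)$). After fixing a triangulation of~$S^3$ with~$T$ simplices, the push-forward along~$f_k$ yields integer cycles~$c_k \in C_3(S^2;\Z)$ with~$|c_k|_1 \leq T$, independent of~$k$. For any integer fillings~$b_k \in C_4(S^2;\Z)$ and~$\tilde b_k \in C_4(\C P^2;\Z)$ of~$c_k$, the $4$-cycle~$\tilde b_k - b_k \in C_4(\C P^2;\Z)$ has homology class~$\pm k^2 \cdot [\C P^2] \in H_4(\C P^2;\Z) = \Z$: under the isomorphism~$H_4(\C P^2) \cong H_4(\C P^2, S^2) \cong H_4(S^4) = \Z$ (coming from the long exact sequence of the pair together with~$\C P^2 / S^2 \simeq S^4$), the relative class of~$\tilde b_k$ equals the Hopf invariant~$k^2$ of~$f_k$ (realised as the degree of the quotient chain~$\overline{\tilde b_k}$ in~$S^4$), while~$b_k$ contributes nothing relatively. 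Since the integer simplicial volume of any closed oriented manifold is at least~$1$, we obtain
\[
  |b_k|_1 + |\tilde b_k|_1 \;\geq\; |\tilde b_k - b_k|_1 \;\geq\; k^2 \cdot \|\C P^2\|_{\Z} \;\geq\; k^2,
\]
so at least one of the sequences~$(|b_k|_1)_k$ and~$(|\tilde b_k|_1)_k$ tends to infinity along a subsequence, forcing either~$C_*(S^2;\Z)$ or~$C_*(\C P^2;\Z)$ to fail~$3$-$\ubc$; both spaces are simply connected. For general~$n \geq 3$, the same strategy should apply with the Hopf map and~$\C P^2$ replaced by a suitable attaching map~$S^n \to S^k$ carrying an integer-valued secondary invariant that scales super-linearly (e.g.\ the quaternionic Hopf map for~$n = 7$, or higher Hopf constructions and iterated Whitehead products more generally) together with the corresponding simply connected mapping cone of dimension~$n+1$.

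The main technical obstacle is the chain-level identification of the relative class of~$\tilde b_k$ in~$H_4(\C P^2, S^2;\Z)$ with the Hopf invariant of~$f_k$: one must translate between the topological Hopf invariant (defined via cup products or linking numbers in the mapping cone) and the degree of the collapsed chain~$\overline{\tilde b_k}$ in~$S^4 = \C P^2/S^2$. Once this dictionary is made precise, the super-linear lower bound is immediate, and generalising to higher~$n$ reduces to finding an analogous chain-level integer invariant in each relevant dimension.
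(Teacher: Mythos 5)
Your approach is genuinely different from the paper's, but it has a fatal gap at the central step, and the paper's construction is quite different in spirit.

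The key flaw is the claim that ``the relative class of~$\tilde b_k$ equals the Hopf invariant~$k^2$ of~$f_k$.'' The relative class of a filling~$\tilde b_k$ in~$H_4(\C P^2, S^2;\Z)$ is \emph{not} determined by the condition~$\partial \tilde b_k = c_k$: from the long exact sequence of the pair ($H_4(S^2)=0$ and $H_3(S^2)=0$) the map $H_4(\C P^2;\Z)\longrightarrow H_4(\C P^2,S^2;\Z)$ is an isomorphism, so two fillings of~$c_k$ differ by an arbitrary multiple of the fundamental cycle of~$\C P^2$ and their relative classes differ correspondingly. In particular, taking~$\tilde b_k := b_k$ (viewed inside~$C_4(\C P^2;\Z)$ via the inclusion~$S^2\hookrightarrow \C P^2$) yields~$\tilde b_k - b_k = 0$, which destroys the lower bound. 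Conceptually the issue is that the Hopf invariant is a homotopy invariant of the map~$f_k$, while the chain~$c_k = (f_k)_*(\text{triangulation of }S^3)$ only remembers the (trivial) homology class~$[c_k]=0\in H_3(S^2;\Z)$; nothing in the boundary-value problem~$\partial b = c_k$ retains the Hopf invariant. A secondary gap is the step~$\|k^2\cdot[\C P^2]\|_\Z \geq k^2\cdot\|\C P^2\|_\Z$: for the integral $\ell^1$-semi-norm, super-additivity in the multiplicity is not automatic and would need a separate argument (and the real semi-norm of~$[\C P^2]$ is~$0$, so one cannot deduce it from the real case). The generalisation to~$n>3$ is likewise left at the level of a slogan.

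By contrast, the paper proves the statement by building a single simply connected space out of infinitely many pieces, so that the obstruction grows unboundedly within the \emph{same} space. Concretely, take~$N := S^n$ and let~$M_k$ be an oriented compact simply connected $(n+1)$-manifold with~$\partial M_k \cong N$ and~$b_2(M_k;\Z)\geq k$ (for instance~$(S^2\times S^{n-1})^{\#k}$ minus an open $(n+1)$-ball); then set~$M := \prod_{k\in\N} M_k$ (or the wedge). Including a fixed fundamental cycle~$c$ of~$N$ into the factor~$M_k$ produces null-homologous cycles~$c_k\in C_n(M;\Z)$ of uniformly bounded $\ell^1$-norm, but any filling of~$c_k$ projects along~$p_k\colon M\to M_k$ to a relative fundamental cycle of~$(M_k,\partial M_k)$, whose $\ell^1$-norm is at least~$\|M_k,\partial M_k\|_\Z \geq b_2(M_k;\Z)\geq k$ by the Betti-number estimate for integral (relative) simplicial volume. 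The obstruction used there is a lower bound on relative fundamental cycles via Betti numbers, which is robust and already works in homology; your proposal tries to extract a lower bound from a homotopy-theoretic invariant (the Hopf invariant) that does not descend to the chain-level data available in the filling problem.
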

\begin{proof}
  We will construct such an example using the following input: Let $N$
  be an oriented closed (connected) $n$-manifold and let $(M_k)_{k \in
    \N}$ be a sequence of oriented compact connected $(n+1)$-manifolds
  such that for every~$k \in \N$ we have
  \[ \partial M_k \cong N
     \quad\text{and}\quad
     b_2(M_k;\Z) \geq k.
  \]
  We then set
  \[ M := \prod_{k \in \N} M_k
  \]
  (if one prefers an example of dimension~$n+1$, one can
  also apply the same arguments to~$M := \bigvee_{k \in \N} M_k$).
  
  We now prove that $C_*(M;\Z)$ does \emph{not} satisfy \UBC n: Let
  $c \in C_n(N;\Z)$ be a fundamental cycle of~$N$.  As a preparation,
  for~$k \in \N$ we consider the fillings of~$c$ in~$C_*(M_k;\Z)$. The
  Betti number estimate~\cite[Example~14.28]{lueckl2}\cite[Lemma~4.1]{flps} 
  for integral simplicial volume generalises to the relative case and
  shows that
  \[ b_j(M_k;\Z) \leq \| M_k,\partial M_k \|_{\Z}
  \]
  holds for all~$j \in \N$. In particular,
  \[ \| M_k, \partial M_k \|_{\Z} \geq b_2(M_k;\Z) \geq k.
  \]
  Let $b \in C_{n+1}(M_k;\Z)$ be a chain with~$\partial b = c$,
  where we view~$c \in C_n(N;\Z)$ as an element of~$C_n(M_k;\Z)$
  via~$\partial M_k \cong N$. Then $b$ is a relative fundamental
  cycle of~$(M_k,\partial M_k)$, and so
  \[ |b|_1 \geq \| M_k,\partial M_k\|_{\Z} \geq k.
  \]
  We now come back to~$M$: For each~$k \in \N$, we choose a basepoint
  in~$M_k$; thus we obtain an inclusion~$i_k \colon M_k
  \longrightarrow M$. If $p_k \colon M \longrightarrow M_k$
  denotes the canonical projection, we have $p_k \circ i_k = \id_{M_k}$.
  For~$k \in \N$ we consider
  \[ c_k := C_n(i_k;\Z)(c) \in C_n(M;\Z).
  \]
  Then $|c_k|_1 \leq |c|_1$ and $c_k$ is null-homologous (because it
  can be filled by any relative fundamental cycle in the factor~$M_k$).
  However, if $b \in C_{n+1}(M;\Z)$ is a filling of~$c_k$, then
  $b_k := C_{n+1}(p_k;\Z)(b) \in C_{n+1}(M_k;\Z)$ is a filling of~$c$
  and thus
  \[ |b|_1 \geq |b_k|_1 \geq k.
  \]
  Therefore, $C_*(M;\Z)$ does \emph{not} satisfy \UBC n.

  In order to finish the proof we only need to find the input
  manifolds~$N$ and~$M_k$ with the additional property that
  all~$M_k$ are simply connected (because then $M$ will also
  be simply connected). For example, we can take $N := S^n$ and
  $M_k$ to be $(S^2 \times S^{n-1})^{\# k}$ minus a small $(n+1)$-ball.
\end{proof}

However, it is not clear whether one can find aspherical sequences of
this type with amenable fundamental group. Therefore, the following
problem remains open:

\begin{question}
  Let $M$ be an aspherical space with amenable fundamental group and
  let $n \in \N$.  Does $C_*(M;\Z)$ satisfy \UBC n?
\end{question}

\section{Application: integral foliated simplicial volume and glueings along tori}\label{sec:glueing}

As a sample application of the uniform boundary condition for the pa\-ram\-etrised
$\ell^1$-norm, we prove a simple additivity statement for integral foliated
simplicial volume and glueings along tori. 

\subsection{Integral foliated simplicial volume}\label{subsec:ifsvrel}

As a first step, we generalise the definition of integral foliated
simplicial volume~\cite{mschmidt} to the case of manifolds with
boundary. 

\begin{rem}[parametrised relative fundamental cycles]
  Let $(M,\partial M)$ be an oriented compact connected $n$-manifold
  with boundary; if $\partial M$ is non-empty, we will in addition assume that $\partial M$
  is connected and $\pi_1$-injective. Let $\pi \colon
  \ucov M \longrightarrow M$ be the universal covering of~$M$ and let
  $U \subset \pi^{-1}(\partial M)$ be a connected
  component of~$\pi^{-1}(\partial M)$. In particular, $\pi|_U \colon
  U\longrightarrow \partial M$ is a universal covering for~$\partial
  M$.
  
  Let $\Gamma := \pi_1(M)$ and let $\alpha$ be a standard $\Gamma$-space;
  then $\pi^{-1}(\partial M)$ is closed under the $\Gamma$-action on~$\ucov M$
  and thus we can consider the subcomplex
  \[ D_* := L^\infty(\alpha;\Z) \otimes_{\Z \Gamma} C_*\bigl(\pi^{-1}(\partial M);\Z\bigr)
  \]
  of~$C_*(M;\alpha)$.
  
  A chain~$c \in C_n(M;\alpha)$ \emph{represents~$[M,\partial M]^\alpha$}
  if there exist $b \in C_{n+1}(M;\alpha)$ and $d \in D_n$
  as well as an ordinary relative fundamental cycle~$c_\Z \in C_n(M;\Z)$
  (representing~$[M,\partial M]_{\Z}$)  
  such that
  \[ c = c_\Z + \partial b + d;
  \]
  here, we view~$C_*(M;\Z)$ as a subcomplex of~$C_*(M;\alpha)$ via the
  inclusion of constant functions. 
  In particular, in this situation, we have
  $\partial c \in D_{n-1}$; more precisely, we have the equality 
  \[ \partial c = \partial c_\Z +\partial d
  \]
  in~$D_{n-1}$. So, $\partial c$ is a cycle in~$D_*$.

  We will now explain how $\partial c$ can be interpreted a
  parametrised fundamental cycle of~$\partial M$: Let $\Gamma_0 :=
  \pi_1(\partial M) \subset \Gamma$ (the fundamental groups of~$M$ and
  $\partial M$ should be taken with respect to the same basepoint
  in~$\partial M$).  Then the restriction~$\alpha_0 :=
  \res^\Gamma_{\Gamma_0} \alpha$ of the $\Gamma$-action to the corresponding
  $\Gamma_0$-action is a standard $\Gamma_0$-space and we have the
  canonical (isometric) chain isomorphism
  \begin{align*}
    D_* & \cong L^\infty(\alpha;\Z) \otimes_{\Z \Gamma} C_*\bigl(\pi^{-1}(\partial M);\Z\bigr)
    \\
    & \cong L^\infty(\alpha;\Z) \otimes_{\Z \Gamma} \Z \Gamma \otimes_{\Z \Gamma_0} C_*(U;\Z)
    \\
    & \cong L^\infty(\alpha_0;\Z) \otimes_{\Z \Gamma_0} C_*(U;\Z)
    \\
    & = C_*(\partial M;\alpha_0).
  \end{align*}
  Hence, the equation~$\partial c = \partial c_\Z + \partial d$ holds
  in the complex~$C_*(\partial M;\alpha_0)$, which shows that $\partial c$
  represents~$[\partial M]^{\alpha_0}$ (because $\partial c_\Z$ is an integral
  fundamental cycle of~$\partial M$).
\end{rem}

\begin{defi}[integral foliated simplicial volume]
  Let $(M,\partial M)$ be an oriented compact connected $n$-manifold
  (with possibly empty boundary; if the boundary is non-empty, we will
  assume that $\partial M$ is connected and $\pi_1$-injective) and let
  $\Gamma := \pi_1(M)$.
  \begin{itemize}
  \item If $\alpha$ is a standard $\Gamma$-space,
    then we write
    \[ \ifsv {M,\partial M}^\alpha
       := \inf \bigl\{ |c|_1 \bigm| \text{$c \in C_n(M;\alpha)$ represents~$[M,\partial M]^\alpha$}\bigr\}.
    \]
    for the \emph{$\alpha$-parametrised simplicial volume
      of~$(M,\partial M)$}.
  \item The \emph{integral foliated simplicial volume of~$(M,\partial M)$} is
    then defined as
    \[ \ifsv {M,\partial M} := \inf_{\alpha \in P(\Gamma)} \ifsv {M,\partial M}^\alpha,
    \]
    where $P(\Gamma)$ denotes the set of all (isomorphism classes of)
    standard $\Gamma$-spaces.
  \end{itemize}
\end{defi}

\subsection{Glueings along tori}

We can now formulate the glueing result: 

\begin{prop}[glueings along tori]\label{prop:glueing}
  Let $n \in \N_{>0}$ and let $(M_+,\partial M_+)$, $(M_-,\partial
  M_-)$ oriented compact connected $(n+1)$-manifolds whose boundary is
  $\pi_1$-injective and homeomorphic to the $n$-torus~$(S^1)^n$ and
  let $f \colon \partial M_+ \longrightarrow \partial M_-$ be an
  orientation-preserving homeomorphism. Let $M := M_+ \cup_f M_-$
  be the oriented closed connected $(n+1)$-manifold obtained by glueing~$M_+$
  and~$M_-$ along the boundary via~$f$ and
  let $\Gamma := \pi_1(M) \cong \pi_1(M_+) *_{\pi_1(f)} \pi_1(M_-)$.
  Furthermore, let $\alpha = \Gamma \actson (X,\mu)$ be an essentially free
  standard $\Gamma$-space with
  \[ \ifsv{M_+,\partial M_+}^{\res^\Gamma_{\pi_1(M_+)} \alpha} = 0
  \quad \text{and}\quad
     \ifsv{M_-,\partial M_-}^{\res^\Gamma_{\pi_1(M_-)} \alpha} = 0.
  \]
  Then~$\ifsv{M}^\alpha = 0$. In particular, $\ifsv M = 0$.   
\end{prop}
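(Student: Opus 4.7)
The plan is to combine small parametrised relative fundamental cycles of~$(M_+,\partial M_+)$ and $(M_-,\partial M_-)$ into a small parametrised fundamental cycle of~$M$, reconciling the mismatch between their boundary cycles via Theorem~\ref{mainthm:parsvab} (parametrised UBC for tori). First, fix~$\epsilon>0$ and, using the hypothesis, choose parametrised relative fundamental cycles $c_\pm \in C_{n+1}(M_\pm;\res^\Gamma_{\pi_1(M_\pm)}\alpha)$ with $|c_\pm|_1<\epsilon$. Via the canonical inclusions~$M_\pm\hookrightarrow M$ view them as chains in~$C_{n+1}(M;\alpha)$ without increasing norm.

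By the discussion of parametrised relative fundamental cycles, each boundary~$\partial c_\pm$ lies in the subcomplex $C_*(\partial M;\alpha_0)$, where $\alpha_0 := \res^\Gamma_{\pi_1(\partial M)}\alpha$, and represents the parametrised fundamental class~$[\partial M_\pm]^{\alpha_0}$. Because $M=M_+\cup_f M_-$ with compatibly chosen orientations, the boundary orientations induced on the common torus from~$M_+$ and from~$M_-$ are opposite, so the sum~$z := \partial c_+ + \partial c_-$ is null-homologous in $C_n(\partial M;\alpha_0)$, with norm at most~$2(n+2)\epsilon$. Since essential freeness passes to subgroup restriction and $\pi_1(\partial M)\cong\Z^n$ injects into~$\Gamma$ by the $\pi_1$-injectivity hypothesis, the restriction~$\alpha_0$ is an essentially free standard~$\pi_1(\partial M)$-space. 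Theorem~\ref{mainthm:parsvab} therefore produces a chain $b\in C_{n+1}(\partial M;\alpha_0)$ with $\partial b=z$ and $|b|_1\leq K\cdot|z|_1$ for a constant $K$ depending only on~$n$.

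Finally, set $c := c_+ + c_- - b \in C_{n+1}(M;\alpha)$. Then $\partial c=0$ in~$C_n(M;\alpha)$ and $|c|_1 \leq \bigl(2 + 2(n+2)K\bigr)\epsilon$. A Mayer--Vietoris-type argument, using the decomposition $c_\pm = c_{\Z,\pm}+\partial\beta_\pm+d_\pm$ from the definition of representing parametrised relative fundamental cycles (so that $c_{\Z,+}+c_{\Z,-}$ is an ordinary integral fundamental cycle of~$M$), shows that $c$ represents~$[M]^\alpha$ in $H_{n+1}(M;\alpha)$. Letting~$\epsilon\to 0$ yields $\ifsv M^\alpha=0$, and taking the infimum over standard $\Gamma$-spaces gives $\ifsv M=0$. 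The main obstacle is this last bookkeeping: keeping signs and orientations straight at the glueing torus and verifying that the boundary-supported chains~$d_\pm$ together with the filling~$b$ fit into the scheme of correction terms allowed in the definition of parametrised (relative) fundamental cycle. Everything else is a direct application of the three ingredients already assembled, namely the choice of cheap fillings on the two pieces, the UBC on the torus, and the Mayer--Vietoris decomposition of the fundamental class.
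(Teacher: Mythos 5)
Your proposal is correct and follows essentially the same route as the paper: take cheap parametrised relative fundamental cycles on $M_\pm$, observe the resulting boundary cycle on the glueing torus is null-homologous, apply Theorem~\ref{mainthm:parsvab} to fill it cheaply, and combine. The only cosmetic difference is the orientation convention (you take $c_++c_-$ with opposite induced boundary orientations, the paper takes $c_+-c_-$ after fixing a positive/negative orientation on the two pieces), which yields the same bound $\ifsv{M}^\alpha\leq (2+2K(n+2))\,\varepsilon$ and the same conclusion.
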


In this situation, we equip the glued manifold~$M = M_+ \cup_f M_-$
with the orientation inherited from the positive orientation on~$M_+$ and the
negative orientation on~$M_-$. 

\begin{proof}
  We prove~$\ifsv{M}^\alpha = 0$ using the uniform boundary condition
  for the parametrised $\ell^1$-norm on tori: Let $\varepsilon \in
  \R_{>0}$ and let $\alpha_+ := \res^\Gamma_{\pi_1(M_+)}$, $\alpha_-
  := \res^{\Gamma}_{\pi_1(M_-)}$ denote the restricted parameter
  spaces. By the hypothesis on the parametrised simplicial volumes of
  the components~$M_+$ and~$M_-$, there exist chains~$c_+ \in
  C_{n+1}(M_+; \alpha_+)$ and $c_- \in C_{n+1}(M_-;\alpha_-)$ that
  represent~$[M_+,\partial M_+]^{\alpha_+}$ and $[M_-,\partial
    M_-]^{\alpha_-}$, respectively, and that satisfy
  \[ |c_+|_1 \leq \varepsilon
     \quad\text{and}\quad
     |c_-|_1 \leq \varepsilon.
  \]
  Then
  \[ c_0 := \partial c_+ - \partial c_-
  \]
  is a null-homologous cycle in~$C_{n}(\partial M_-;\alpha_0)$, where $\alpha_0$
  denotes the restriction of~$\alpha$ to the subgroup~$\pi_1(\partial M_-)$. We
  view $M_+$ and $M_-$ in the canonical way as subspaces of~$M$ and use the
  identification via~$f$ to view both $\partial c_+$ and $\partial c_-$
  as chains on~$\partial M_-$. 
  By construction,
  \[ |c_0|_1 \leq (n+2) \cdot |c_+|_1 + (n+2) \cdot |c_-|_1
             \leq 2 \cdot (n+2) \cdot \varepsilon.
  \]
  In view of the uniform boundary condition on the torus~$\partial
  M_-$ (Theorem~\ref{mainthm:parsvab}), there exists a chain~$b \in C_{n+1}(\partial M_-;\alpha_0)$
  with
  \[ \partial b = c_0
     \quad\text{and}\quad
     |b|_1 \leq K \cdot |c_0|_1,
  \]
  where $K$ is a UBC-constant for~$C_n(\partial M_-;\alpha_0)$ (which is independent
  of $\varepsilon$, $c_+$, and $c_-$).
  Then
  \[ c := c_+ - c_- - b \in C_{n+1}(M;\alpha) 
  \]
  is a cycle representing~$[M]^\alpha$ and, by construction,
  \[      \ifsv M ^\alpha 
     \leq |c|_1
     \leq |c_+|_1 + |b|_1 + |c_-|_1
     \leq 2 \cdot \varepsilon + K \cdot 2 \cdot (n+2) \cdot \varepsilon.
  \]
  Taking the infimum over all~$\varepsilon \in \R_{>0}$ shows that $\ifsv M ^\alpha = 0$. 
\end{proof}

\begin{cor}
  Under the hypotheses of Proposition~\ref{prop:glueing}, if in
  addition $\Gamma$ is residually finite and $\alpha = \Gamma \actson \widehat \Gamma$ is the canonical action of~$\Gamma$ on its profinite
  completion, then
  \[ \stisv M = 0
  \]
	(see p.~\pageref{def:stisv} for the definition of~$\stisv M$).
\end{cor}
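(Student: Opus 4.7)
The plan is to apply Proposition~\ref{prop:glueing} to the profinite completion action~$\alpha = \Gamma \curvearrowright \widehat\Gamma$ in order to obtain~$\ifsv M^\alpha = 0$, and then to upgrade this to~$\stisv M = 0$ via an approximation argument exploiting residual finiteness.

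First, I verify the hypotheses of Proposition~\ref{prop:glueing} for this~$\alpha$. The profinite completion carries a canonical Haar probability measure preserved by the left translation action of~$\Gamma$, making~$\alpha$ a standard $\Gamma$-space. Essential freeness follows from residual finiteness: any~$\gamma \neq e$ in~$\Gamma$ has nontrivial image in some finite quotient~$\Gamma/\Gamma_k$, and hence acts fixed-point-freely on~$\widehat\Gamma$ through the $\Gamma$-equivariant projection~$\widehat\Gamma \to \Gamma/\Gamma_k$. Proposition~\ref{prop:glueing} then yields~$\ifsv M^\alpha = 0$.

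The main step is to deduce~$\stisv M = 0$ from~$\ifsv M^\alpha = 0$; that is, to establish the inequality~$\stisv M \leq \ifsv M^\alpha$ for the profinite completion action of a residually finite group. Given~$\varepsilon > 0$, pick a parametrised fundamental cycle~$c \in C_{n+1}(M;\alpha)$ with~$|c|_1 < \varepsilon$. Since~$c$ represents~$[M]^\alpha$, the image of an integral fundamental cycle~$c_\Z \in C_{n+1}(M;\Z)$ under the inclusion of constant functions~$\Z \hookrightarrow L^\infty(\widehat\Gamma;\Z)$, there exists~$b \in C_{n+2}(M;\alpha)$ with~$c = c_\Z + \partial b$. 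Each coefficient of~$b$ lies in~$L^\infty(\widehat\Gamma;\Z)$ and can be approximated in~$L^1$-norm by a $\Z$-valued step function constant on cosets of a sufficiently deep finite-index normal subgroup~$\Gamma_k \triangleleft \Gamma$, because the cylinder sets from the finite quotients of~$\Gamma$ generate the Borel $\sigma$-algebra of~$\widehat\Gamma$. Writing~$b_k$ for the resulting approximant (so~$|b-b_k|_1 \to 0$), the chain
\[ c_k := c_\Z + \partial b_k \in C_{n+1}(M;\alpha_k), \]
where~$\alpha_k$ denotes the induced $\Gamma$-action on~$\Gamma/\Gamma_k$, is automatically a cycle representing~$[M]^{\alpha_k}$, and~$|c_k|_1 \to |c|_1 < \varepsilon$. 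Under the canonical isomorphism
\[ \Z[\Gamma/\Gamma_k] \otimes_{\Z\Gamma} C_*(\ucov M;\Z) \;\cong\; C_*(M_k;\Z), \]
where~$M_k \to M$ is the regular finite cover of degree~$d_k := [\Gamma : \Gamma_k]$ associated to~$\Gamma_k$, the chain~$c_k$ corresponds to a genuine integer fundamental cycle~$\widetilde c_k$ of~$M_k$ with integral $\ell^1$-norm equal to~$d_k \cdot |c_k|_1$. Hence
\[ \frac{\sv{M_k}_\Z}{d_k} \;\leq\; \frac{|\widetilde c_k|_1}{d_k} \;=\; |c_k|_1 \;<\; \varepsilon + o(1), \]
and letting~$\varepsilon \to 0$ (and~$k \to \infty$) gives~$\stisv M = 0$.

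The main obstacle is ensuring that the coset-constant approximant is a \emph{bona fide} integer fundamental cycle of the corresponding finite cover, not merely an approximate cycle: the individual $L^1$-approximations of the coefficients of~$c$ would generally destroy the tensor relations that make~$\partial c$ vanish, leading to a non-cycle. This difficulty is bypassed by the technical trick of approximating the filling chain~$b$ instead of the cycle~$c$ itself, since~$c_\Z + \partial b_k$ is then automatically a cycle representing the correct homology class by construction, with no boundary correction step required.
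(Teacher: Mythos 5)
Your proposal is correct, but in its second half it takes a genuinely different route from the paper. The paper verifies (as you do) that residual finiteness together with finite generation makes $\alpha = \Gamma \actson \widehat\Gamma$ an essentially free standard $\Gamma$-space, applies Proposition~\ref{prop:glueing} to get $\ifsv M^\alpha = 0$, and then simply quotes the equality $\stisv M = \ifsv M ^{\Gamma \actson \widehat\Gamma}$ from Theorem~2.6 of~\cite{flps}. You instead re-prove by hand the only direction needed, $\stisv M \leq \ifsv M^{\alpha}$: you approximate the finitely many coefficients of a filling $b$ with $c = c_\Z + \partial b$ in $L^1$ by integer-valued functions factoring through a single finite quotient $\Gamma/\Gamma_k$ (take a common refinement of the finitely many finite-index normal subgroups involved, which is where residual finiteness enters only through the structure of~$\widehat\Gamma$), so that $c_k := c_\Z + \partial b_k$ is automatically a cycle with coefficients in $\Z[\Gamma/\Gamma_k]$ representing $[M]^{\alpha_k}$ and $|c_k|_1 \leq |c|_1 + (n+3)\cdot |b - b_k|_1$; under $\Z[\Gamma/\Gamma_k] \otimes_{\Z\Gamma} C_*(\ucov M;\Z) \cong C_*(M_k;\Z)$ this becomes an honest integral fundamental cycle of the associated connected finite covering $M_k$ of degree~$d_k$ (the image of $c_\Z$ is the full $p_k$-lift of an integral fundamental cycle), of integral norm $d_k \cdot |c_k|_1$, whence $\sv{M_k}_{\Z}/d_k$ is arbitrarily small. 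This is sound: in particular, your trick of approximating the filling rather than the cycle itself is exactly what avoids the boundary-correction problem you identify, and the norm bookkeeping (normalised counting measure versus integral $\ell^1$-norm) is right. In effect you give a self-contained proof of the vanishing case of the cited theorem of Frigerio--L\"oh--Pagliantini--Sauer; the citation buys the paper brevity and the full equality $\stisv M = \ifsv M^{\alpha}$, while your argument buys independence from that reference at the cost of re-deriving a known result.
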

\begin{proof}
  Because $\Gamma$ is residually finite and finitely generated,
  $\alpha$ is an essentially free standard $\Gamma$-space
  and~\cite[Theorem~2.6]{flps}
  \[ \stisv M = \ifsv M ^{\Gamma \actson \widehat \Gamma}.
  \]
  By Proposition~\ref{prop:glueing}, $\ifsv M ^{\Gamma \actson \widehat \Gamma} = 0$,
  which proves the corollary.
\end{proof}

\begin{rem}[growth and gradient invariants]
  In particular, in these situations, we obtain corresponding vanishing
  results for homology growth and logarithmic homology torsion
  growth~\cite[Theorem~1.6]{flps} as well as for the rank
  gradient~\cite{loehrg}.
\end{rem}
  
\begin{rem}[multiple boundary components, self-glueings]
  In the same way as in Proposition~\ref{prop:glueing}, one can also treat
  glueings along disconnected boundaries where each glued component is a torus
  as well as self-glueings along torus boundary components.
\end{rem}

It would be desirable to also obtain additivity formulae for glueings
along amenable boundaries in the case of summands with non-zero
integral foliated simplicial volume (as in the case of ordinary
simplicial volume)~\cite{vbc,kuessner}. However, one further ingredient for
such additivity results is the so-called equivalence
theorem~\cite{vbc,bbfipp}.

\begin{question}
  Can the F\o lner filling technique be used to give direct proofs of
  the equivalence theorem (in the aspherical case)? Can such an
  argument be refined to lead to an equivalence theorem for integral
  foliated simplicial volume?
\end{question}

\subsection{Concrete examples}

We will now give a concrete class of examples for such torus glueings,
leading to new vanishing results for integral foliated simplicial volume.

\begin{lem}\label{lem:crossproduct}
  Let $(N,\partial N)$ be an oriented compact connected manifold with
  connected boundary (which might be empty), let $k \in \N_{>0}$, and let $M :=
  N \times (S^1)^{k}$. If $\alpha$ is an
  essentially free standard $\pi_1(M)$-space, then
  \[ \ifsv{M, \partial M}^\alpha = 0 .
  \]
\end{lem}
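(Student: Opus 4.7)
The plan is to build small parametrised relative fundamental cycles of~$(M,\partial M)$ directly, by running the F\o lner--Rokhlin construction from the proof of Theorem~\ref{mainthm:parsvab} on the torus factor and taking the Eilenberg--Zilber cross product with an integral relative fundamental cycle of~$N$. Since the restriction of~$\alpha$ to the subgroup $\Z^k = \pi_1((S^1)^k) \subset \pi_1(M)$ is still an essentially free standard $\Z^k$-space, the Rokhlin lemma for free abelian groups (Theorem~\ref{thm:rokhlinab}) is available.

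First I fix simplexwise lifts~$\ucov c_N \in C_*(\ucov N;\Z)$ and~$\ucov c_T \in C_k(\ucov T;\Z)$ of an integral relative fundamental cycle of~$N$ and of a standard integral fundamental cycle of~$T := (S^1)^k$, respectively. Because $\partial \ucov c_T$ is a $\pi$-lift of~$0$, the lifting lemma (Lemma~\ref{lem:lifting}) supplies a finite $S \subset \Z^k$ and a constant $C>0$ with $|F \cdot \partial \ucov c_T|_1 \leq C \cdot |\partial_S F|$ for every finite~$F \subset \Z^k$; then, for any F\o lner sequence~$(F_m)_{m\in\N}$ of~$\Z^k$ with respect to~$S$, the filling lemma (Lemma~\ref{lem:filling}) applied in the contractible space~$\ucov T$ yields chains~$\ucov b_m \in C_k(\ucov T;\Z)$ with~$\partial \ucov b_m = F_m^{-1}\cdot \partial \ucov c_T$ and $|\ucov b_m|_1 \leq C \cdot |\partial_S F_m^{-1}|$. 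Using Theorem~\ref{thm:rokhlinab} for the restricted $\Z^k$-action on~$(X,\mu)$, I also choose measurable subsets~$A_m \subset X$ such that the translates~$(\gamma \cdot A_m)_{\gamma \in F_m}$ are pairwise disjoint and $\mu(B_m) < \varepsilon_m$ for $B_m := X \setminus F_m \cdot A_m$, with $\varepsilon_m \to 0$.

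The candidate parametrised cycles are
\[ c_m := \chi_{A_m} \otimes (\ucov c_N \times \ucov b_m) + \chi_{B_m} \otimes (\ucov c_N \times \ucov c_T) \in C_{\dim M}(M;\alpha),
\]
where~$\times$ denotes the Eilenberg--Zilber cross product on~$C_*(\ucov M;\Z) = C_*(\ucov N \times \ucov T;\Z)$. The tensor identity $\chi_{A_m} \otimes F_m^{-1} z = \chi_{F_m\cdot A_m} \otimes z$, central in the proofs of Theorems~\ref{mainthm:parsvab} and~\ref{mainthm:parsv}, will show that $\partial c_m$ coincides with $\partial(1 \otimes \ucov c_N \times \ucov c_T)$ modulo the sub-complex~$D_*$ associated with~$\partial M$, so~$c_m$ is a relative cycle. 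Moreover, the difference $c_m - 1 \otimes (\ucov c_N \times \ucov c_T)$ simplifies to $\pm \chi_{A_m} \otimes \ucov c_N \times (F_m^{-1} \ucov c_T - \ucov b_m)$, and since $F_m^{-1} \ucov c_T - \ucov b_m$ is a null-homologous cycle on the contractible space~$\ucov T$, this difference becomes a genuine boundary (plus a chain in~$D_*$) after one fills it in~$C_{k+1}(\ucov T;\Z)$; hence $c_m$ represents~$[M,\partial M]^\alpha$.

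Finally, the Eilenberg--Zilber estimate $|\ucov c_N \times z|_1 \leq \binom{\dim M}{\dim N} \cdot |\ucov c_N|_1 \cdot |z|_1$ combined with the Rokhlin bound $\mu(A_m) \leq 1/|F_m|$ yields
\[ |c_m|_1 \leq \binom{\dim M}{\dim N} \cdot |c_N|_1 \cdot \Bigl( C \cdot \frac{|\partial_S F_m^{-1}|}{|F_m|} + \varepsilon_m \cdot |c_T|_1 \Bigr),
\]
which tends to zero as~$m \to \infty$ by the F\o lner property, proving the lemma. The main bookkeeping obstacle will be confirming that all contributions involving~$\partial \ucov c_N$ land in the sub-complex~$D_*$ supported over~$\partial M$, so that~$c_m$ does in fact represent the relative parametrised fundamental class, and that the tensor manipulations remain valid in~$C_*(M;\alpha)$ despite the a priori non-trivial action of~$\pi_1(N) \subset \pi_1(M)$ on the coefficient module~$L^\infty(\alpha;\Z)$.
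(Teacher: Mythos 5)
Your proof is correct and follows essentially the same strategy as the paper's: take the Eilenberg--Zilber cross product of an integral relative fundamental cycle of~$N$ with a Rokhlin-small parametrised fundamental cycle of the torus factor, and conclude via the cross-product norm estimate. The paper shortens the argument by reducing to~$k=1$ and citing the existence of $\varepsilon$-small parametrised fundamental cycles of~$S^1$ from the literature, whereas you rebuild them inline via the F\o lner--Rokhlin machinery of Theorem~\ref{mainthm:parsvab} on~$(S^1)^k$; this is more self-contained but not a genuinely different route.
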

\begin{proof}
  This is a relative version of the product inequality by Schmidt~\cite[Theorem~5.34]{mschmidt}.
	It suffices to consider the case~$k=1$, i.e., $M=N\times S^1$.
	We write~$\Gamma := \pi_1(M)$ and~$\Lambda \subset \Gamma$ for the subgroup corresponding to the $S^1$-factor.
	Moreover we write~$n:= \dim M$.
	
	Then~$\res^\Gamma_\Lambda \alpha$ is an essentially free standard $\Lambda$-space.
	For every~$\epsilon \in \R_{> 0}$ there exists a parametrised fundamental 
	cycle~$c\in C_1(S^1;\res^\Gamma_\Lambda \alpha)$ of~$S^1$ with
	\[ |c|_1 \leq \epsilon.
	\]
	This follows from an application of the Rokhlin lemma~\cite[Theorem~1.9]{flps}\cite[Proposition~5.30]{mschmidt}.
	Let~$c_N \in C_{n-1} (N;\Z)$ be a relative fundamental cycle of~$N$. Then the cross-product
	\[ c_M := c_N \times c \in C_n (M;\alpha)
	\]
	is a representative of~$[M,\partial M]^\alpha$ and
	\begin{align*}
	  |c_M|_1 &\leq \binom{n}{n-1} \cdot |c_N|_1 \cdot |c|_1
                \leq n \cdot |c_N|_1 \cdot \epsilon.
	\end{align*}
	Hence, $\ifsv{M, \partial M}^\alpha =0$.
\end{proof}

\begin{cor}
  Let $(N_+, \partial N_+)$ and $(N_-,\partial N_-)$ be oriented compact connected
  manifolds with boundary, let $n_+ := \dim N_+$ and $n_-:= \dim N_-$, and
  suppose that $\partial N_+$ and $\partial N_-$ are $\pi_1$-injective tori of dimension~$n_+$
  and $n_-$, respectively. Furthermore, let $n \in \N_{> \max(n_+, n_-)}$, and let
  \[ f \colon \partial N_+ \times (S^1)^{n - n_+}
  \longrightarrow
  \partial N_- \times (S^1)^{n - n_-}
  \]
  be an orientation-preserving homeomorphism. Then the glued
  manifold
  \[ M := \bigl(N_+ \times(S^1)^{n - n_+}\bigr) \cup_f
    \bigl(N_- \times(S^1)^{n - n_-}\bigr).
  \]
  satisfies
  \[ \ifsv M ^\alpha = 0
  \]
  for every essentially free standard $\pi_1(M)$-space~$\alpha$. In particular, $\ifsv M = 0$.
\end{cor}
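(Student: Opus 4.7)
The plan is to verify that the hypotheses of Proposition~\ref{prop:glueing} hold with $M_+ := N_+ \times (S^1)^{n-n_+}$ and $M_- := N_- \times (S^1)^{n-n_-}$, and then to invoke it. The first step is a routine dimension and $\pi_1$-check: each $M_\pm$ is an oriented compact connected $(n+1)$-manifold\ldots actually an $n$-manifold; since $\dim N_\pm = n_\pm$ we have $\dim M_\pm = n_\pm + (n-n_\pm) = n$, so the set-up of Proposition~\ref{prop:glueing} applies with $n$ replaced appropriately. The boundary $\partial M_\pm = \partial N_\pm \times (S^1)^{n-n_\pm}$ is an $(n-1)$-torus (since $\partial N_\pm$ is an $(n_\pm - 1)$-torus by hypothesis), so $f$ is a homeomorphism between the two resulting tori of the same dimension. $\pi_1$-injectivity of $\partial M_\pm \hookrightarrow M_\pm$ follows from the $\pi_1$-injectivity of $\partial N_\pm \hookrightarrow N_\pm$, because both inclusions split off the common $(S^1)^{n-n_\pm}$-factor.

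Next, I would verify that the parametrised simplicial volume of each piece vanishes under the restricted action. Let $\alpha$ be an essentially free standard $\pi_1(M)$-space. Since stabilisers in a subgroup are intersections with stabilisers in the full group, the restriction $\alpha_\pm := \res^{\pi_1(M)}_{\pi_1(M_\pm)} \alpha$ is again an essentially free standard $\pi_1(M_\pm)$-space. Because $M_\pm = N_\pm \times (S^1)^{n - n_\pm}$ and $n - n_\pm \geq 1$ by hypothesis, Lemma~\ref{lem:crossproduct} directly yields
\[ \ifsv{M_+, \partial M_+}^{\alpha_+} = 0 \quad \text{and} \quad \ifsv{M_-, \partial M_-}^{\alpha_-} = 0. \]

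Finally, I would apply Proposition~\ref{prop:glueing} to $M = M_+ \cup_f M_-$ with the essentially free standard $\pi_1(M)$-space $\alpha$ to conclude $\ifsv M^\alpha = 0$. Taking the infimum over all standard $\pi_1(M)$-spaces (in particular, the Bernoulli shift, which is essentially free) then gives $\ifsv M = 0$. There is no real obstacle here: the whole corollary is an application of the two preceding results, and the only subtle point is the bookkeeping that confirms the restricted parameter spaces remain essentially free and that the glued boundary is indeed a single $(n-1)$-torus of the form required by Proposition~\ref{prop:glueing}.
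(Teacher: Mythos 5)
Your proof is correct and follows the same route as the paper: verify that each $M_\pm := N_\pm \times (S^1)^{n - n_\pm}$ meets the hypotheses of Proposition~\ref{prop:glueing} (the restricted parameter spaces stay essentially free because $\pi_1$-injectivity gives $\pi_1(M_\pm) \hookrightarrow \pi_1(M)$), use Lemma~\ref{lem:crossproduct} for the vanishing of the parametrised relative simplicial volumes of the pieces, and then invoke Proposition~\ref{prop:glueing}. You also silently correct a small slip in the statement as given --- with $n_\pm := \dim N_\pm$ the boundary $\partial N_\pm$ is an $(n_\pm - 1)$-torus, not an $n_\pm$-torus --- which is exactly the reading that makes $\partial M_\pm$ the $(n-1)$-torus required to glue $n$-manifolds by Proposition~\ref{prop:glueing}.
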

\begin{proof}
  We write $M_+ := N_+ \times(S^1)^{n - n_+}$ and $M_- := N_- \times(S^1)^{n - n_-}$  
  as well as $\Gamma := \pi_1(M)$ for the fundamental group of~$M = M_+ \cup_f M_-$.
  
  Let $\alpha$ be an essentially free standard $\Gamma$-space. By 
  $\pi_1$-injectivity of the boundary tori, the restricted parameter
  spaces~$\res^\Gamma_{\pi_1(M_+)} \alpha$ and
  $\res^\Gamma_{\pi_1(M_-)} \alpha$ are essentially free as well. Then Lemma~\ref{lem:crossproduct}
  shows that the hypotheses of Proposition~\ref{prop:glueing} are satisfied
  and hence~$\ifsv M ^\alpha = 0 $ and $\ifsv M = 0$.
\end{proof}

\section{Vanishing of $\ell^1$-homology of amenable groups}\label{sec:l1homology}

We will now give an application to $\ell^1$-homology of groups.  If
$\Gamma$ is a group, we write~$C_*(\Gamma)$ for the standard simplicial $\R
\Gamma$-resolution of~$\R$~\cite[p.~18]{brown} and $C_*(\Gamma;\R) := \R \otimes_{\R
  \Gamma} C_*(\Gamma)$ for the associated chain complex. This chain
complex is a normed chain complex with respect to the $\ell^1$-norm
(given by the basis of all $(n+1)$-tuples in~$\Gamma$ whose $0$-th
vertex is~$1$). Taking the $\ell^1$-completion of~$C_*(\Gamma;\R)$
leads to the \emph{$\ell^1$-chain complex}~$C^{\ell^1}_*(\Gamma;\R)$
of~$\Gamma$~\cite{mm,loehl1}. Then
\emph{$\ell^1$-homology}~$H^{\ell^1}_*(\Gamma;\R)$ is defined as the
homology of~$C^{\ell^1}_*(\Gamma;\R)$.  Using the F\o lner filling
technique, we can reprove the following result of Matsumoto and
Morita~\cite{mm} -- without using
bounded cohomology:

\begin{thm}\label{thm:l1homology}
  Let $\Gamma$ be an amenable group and let $n \in \N_{>0}$. Then
  \[ H^{\ell^1}_n(\Gamma;\R) \cong 0.
  \]
\end{thm}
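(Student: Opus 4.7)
The plan is to transfer the F\o lner filling strategy of Proposition~\ref{mainprop:Q} to the bar resolution and then bridge from $C_*(\Gamma;\R)$ to its $\ell^1$-completion via a UBC-driven telescoping argument. The bar resolution $C_*(\Gamma)$ is a free acyclic $\Z\Gamma$-resolution of~$\R$ equipped with the canonical norm-$1$ chain contraction $s(g_0,\dots,g_k) := (e,g_0,\dots,g_k)$ (with $\partial s + s \partial = \id$ in positive degrees), which plays the role of the filling lemma (Lemma~\ref{lem:filling}); the lifting lemma (Lemma~\ref{lem:lifting}) applies verbatim because each $C_k(\Gamma)$ is a free $\Z\Gamma$-module on its tautological orbit basis. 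Running the argument of Proposition~\ref{mainprop:Q} verbatim in this setting therefore shows that $C_*(\Gamma;\R)$ satisfies \UBC k for every $k \in \N_{>0}$, with UBC constant arbitrarily close to~$1$.

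Next, I apply the same F\o lner scheme to an \emph{arbitrary} cycle~$c \in C_n(\Gamma;\R)$ (not assumed null-homologous in~$C_*$). Lift $c$ to $\widetilde c \in C_n(\Gamma)$ with $|\widetilde c|_1 = |c|_1$; since $c$ is a cycle, $\partial\widetilde c$ lifts~$0$, and Lemma~\ref{lem:lifting} gives $|F \cdot \partial\widetilde c|_1 \leq C \cdot |\partial_S F|$ for a finite $S \subset \Gamma$ depending on $\widetilde c$. Using $s$ twice---first to correct $F \cdot \widetilde c$ into a cycle by subtracting $\widetilde h_F := s(F \cdot \partial\widetilde c)$, then to fill the resulting cycle---produces $\widetilde b_F := s(F \cdot \widetilde c - \widetilde h_F) \in C_{n+1}(\Gamma)$. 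Projecting to $C_{n+1}(\Gamma;\R)$ and dividing by~$|F|$ yields $b_F \in C_{n+1}(\Gamma;\R)$ with $|b_F|_1 \leq |c|_1 + C \cdot |\partial_S F|/|F|$ and $|\partial b_F - c|_1 \leq C \cdot |\partial_S F|/|F|$. Along a F\o lner sequence of $\langle S \rangle$, these produce $b_k \in C_{n+1}(\Gamma;\R)$ with $\partial b_k \to c$ in~$\ell^1$ and $|b_k|_1 \to |c|_1$---but only \emph{approximate} fillings, which is the whole difficulty.

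To upgrade to a genuine $\ell^1$-filling, I invoke UBC from the first step: the differences $\partial b_{k+1} - \partial b_k$ are null-homologous cycles in $C_n(\Gamma;\R)$ whose $\ell^1$-norms can be made summable by choosing the F\o lner sequence sparse enough. UBC in degree~$n$ supplies corrections $e_k \in C_{n+1}(\Gamma;\R)$ with $\partial e_k = \partial b_{k+1} - \partial b_k$ and $|e_k|_1 \leq K \cdot |\partial b_{k+1} - \partial b_k|_1$, so the telescoped sequence $\bar b_{k+1} := \bar b_k + e_k$ is Cauchy in $C^{\ell^1}_{n+1}(\Gamma;\R)$ with $\partial \bar b_k = \partial b_k$, and its $\ell^1$-limit~$b$ satisfies $\partial b = c$. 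A general $\ell^1$-cycle $c \in C^{\ell^1}_n(\Gamma;\R)$ is reduced to this case by approximation: choose finite chains $c^{(k)} \in C_n(\Gamma;\R)$ with $c^{(k)} \to c$ in~$\ell^1$, use UBC in degree $n-1$ to correct the null-homologous boundaries~$\partial c^{(k)}$ into zero, and $\ell^1$-sum the fillings produced by the previous step for the consecutive differences of the resulting cycles in $C_n(\Gamma;\R)$. The main obstacle is precisely this bridge from $C_*(\Gamma;\R)$ to $C^{\ell^1}_*(\Gamma;\R)$: F\o lner filling on the bar complex only delivers chains whose boundary \emph{converges} to~$c$, and UBC---itself a product of F\o lner filling---is what lets us assemble these approximate fillings into a Cauchy sequence whose limit is an exact filling.
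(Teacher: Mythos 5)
Your proposal is correct and follows the same overall strategy as the paper; the differences are organisational rather than substantive. The paper factors the argument into two separate lemmas --- Proposition~\ref{prop:groupnorm0} (the $\ell^1$-semi-norm on $H_n(\Gamma;\R)$ vanishes, proved by F\o lner filling on the bar resolution with the cone $s$ playing the role of the filling lemma) and Proposition~\ref{prop:UBCcompletion} (vanishing semi-norm plus \UBC n implies the map $H_n(C_*) \to H_n(\overline C_*)$ is quantitatively trivial, proved by a geometric-series telescoping of UBC fillings) --- and then, in the proof of Theorem~\ref{thm:l1homology}, uses \UBC{(n-1)} to decompose an arbitrary $\ell^1$-cycle into an $\ell^1$-sum of finite cycles to which Proposition~\ref{prop:UBCcompletion} applies. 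Your argument bundles the first two ingredients into one step: instead of first producing small cycles $c_k$ homologous to $c$ and then filling the telescope $c_{k+1}-c_k$, you track the approximate fillings $b_k$ directly (your $c - \partial b_k$ coincides with the paper's small cycle $c_k$) and run the UBC telescoping on the boundary defects $\partial b_{k+1}-\partial b_k$. The final reduction of a general $\ell^1$-cycle to finite cycles via \UBC{(n-1)} and regrouping/sparsification is word-for-word the paper's. What you gain is a slightly more compact exposition; what the paper's modular formulation buys is a reusable interface (Proposition~\ref{prop:UBCcompletion} is stated for arbitrary normed chain complexes). One small caveat worth making explicit in a write-up: when you telescope, you need the boundary defects to be summable, which --- as you note --- requires passing to a sufficiently sparse F\o lner subsequence; the paper handles the analogous point in the $\ell^1$-cycle decomposition via ``regrouping.''
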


For simplicity, we will only consider the case of trivial
coefficients; analogous arguments apply to more general coefficients,
including coefficients of a more integral nature (as in the integral
foliated case). Moreover, one can also prove the same results
for aspherical spaces with amenable fundamental group. 

The proof of Theorem~\ref{thm:l1homology} consists of the following
steps: We first prove that the $\ell^1$-semi-norm on ordinary group
homology of amenable groups is trivial
(Proposition~\ref{prop:groupnorm0}), without using bounded cohomology
or multicomplexes. We then show that the image of ordinary group
homology in $\ell^1$-homology is uniformly trivial
(Proposition~\ref{prop:UBCcompletion}). In the final step, we
subdivide $\ell^1$-cycles into ordinary cycles and apply the previous
step.

\begin{prop}\label{prop:groupnorm0}
  Let $\Gamma$ be an amenable group and let $n \in \N_{>0}$. Then
  \[ \| \alpha\|_1 = 0
  \]
  for all~$\alpha \in H_n(\Gamma;\R)$. 
\end{prop}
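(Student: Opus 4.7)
The plan is to run a variant of the F\o lner filling argument from the proof of Proposition~\ref{mainprop:Q}, applied directly to an arbitrary cycle representing~$\alpha$ rather than to a null-homologous cycle together with a given filling. The whole argument can be carried out on the bar resolution~$C_*(\Gamma)$ (which has an explicit canonical contracting chain homotopy of norm~$1$) or, equivalently, on singular chains of an aspherical CW-model of the classifying space~$B\Gamma$, whose universal covering~$E\Gamma$ is contractible.

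Pick a cycle~$c \in C_n(\Gamma;\R)$ representing~$\alpha$ and a lift~$\widetilde c \in C_n(\Gamma)$ of~$c$ with $|\widetilde c|_1 = |c|_1$. Since~$c$ is a cycle, $\widetilde a := \partial \widetilde c$ lies in the kernel of the projection~$C_{n-1}(\Gamma) \longrightarrow C_{n-1}(\Gamma;\R)$, i.e., is a lift of~$0$. The lifting lemma (Lemma~\ref{lem:lifting}), applied with $A = \R$, yields a constant~$C \in \R_{>0}$ and a finite subset~$S \subset \Gamma$ such that $|F \cdot \widetilde a|_1 \leq C \cdot |\partial_S F|$ for every finite~$F \subset \Gamma$. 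The subgroup $\Lambda := \langle S \rangle_\Gamma \subset \Gamma$ is finitely generated and amenable; let $(F_k)_{k \in \N}$ be a F\o lner sequence for~$\Lambda$ with respect to~$S$. Then
\[ |\partial(F_k \cdot \widetilde c)|_1 = |F_k \cdot \widetilde a|_1 \leq C \cdot |\partial_S F_k|.
\]

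Because the ambient complex is acyclic in positive degrees, the filling lemma (Lemma~\ref{lem:filling}) provides $\widetilde b_k \in C_n(\Gamma)$ with $\partial \widetilde b_k = \partial(F_k \cdot \widetilde c)$ and $|\widetilde b_k|_1 \leq C \cdot |\partial_S F_k|$. Consequently, $F_k \cdot \widetilde c - \widetilde b_k$ is a cycle in the acyclic complex, hence a boundary; projecting to~$C_*(\Gamma;\R)$, the cycle
\[ c_k := \frac{1}{|F_k|} \cdot \overline{\widetilde b_k} \in C_n(\Gamma;\R)
\]
represents the class~$\alpha$, and satisfies
\[ |c_k|_1 \leq \frac{|\widetilde b_k|_1}{|F_k|} \leq C \cdot \frac{|\partial_S F_k|}{|F_k|} \xrightarrow{k \to \infty} 0,
\]
so $\|\alpha\|_1 = 0$.

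The main subtlety is that Lemmas~\ref{lem:lifting} and~\ref{lem:filling} were phrased for singular chains on a topological space, whereas we want to work here with the bar resolution. I would either pass to singular chains on an aspherical CW-model of~$B\Gamma$ (using that the two resolutions are chain homotopy equivalent by norm-bounded maps, so the $\ell^1$-semi-norms on homology agree), or reprove the filling lemma at the bar level with the canonical cone operator $h(\gamma_0,\dots,\gamma_k) := (e,\gamma_0,\dots,\gamma_k)$, which has norm~$1$. Either route is routine bookkeeping rather than a genuine obstacle.
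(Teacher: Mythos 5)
Your proposal is correct and follows essentially the same route as the paper's own proof: lift a representing cycle~$c$ to~$\widetilde c$, observe that~$\partial\widetilde c$ is a lift of~$0$, take F\o lner translates, cone them off, and project back while dividing by~$|F_k|$. The paper carries this out directly on the bar resolution~$C_*(\Gamma)$ with the explicit cone operator~$s(\eta_1,\dots,\eta_n) = (1,\eta_1,\dots,\eta_n)$ and a hand-chosen finite set~$S = \{\gamma_{j,1}\}$ read off from the reduced form of~$\widetilde c$ (remarking that ``the same argument as in the lifting lemma'' gives the estimate), which is precisely the second of the two routine options you mention at the end; your invocation of Lemmas~\ref{lem:lifting} and~\ref{lem:filling} as black boxes plus the transfer to singular chains on~$B\Gamma$ is a cosmetic repackaging of the same content, not a different argument.
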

\begin{proof}
  This can be proved with the F\o lner filling technique (analogous 
  to the proof of vanishing of integral foliated simplicial volume
  of aspherical manifolds with amenable fundamental group~\cite{flps}):
  Let $n \in \N_{>0}$ and let $c \in C_n(\Gamma;\R)$ be a cycle.

  \emph{Lifting step.} We can write~$c = \sum_{j=1}^m a_j \cdot
  [1,\gamma_{j,1}, \dots, \gamma_{j,n}]$ in reduced form; the
  chain
  \[ \widetilde c := \sum_{j=1}^m a_j \cdot (1,\gamma_{j,1}, \dots, \gamma_{j,n})
     \in C_n(\Gamma)
  \]
  is a lift of~$c$. Let 
  $S := \bigl\{ \gamma_{j,1} \bigm| j \in \{1,\dots,m\} \bigr\}
  $ 
  and let $(F_k)_{k \in \N}$ be a F\o lner sequence for the finitely
  generated amenable group~$\Lambda := \langle S\rangle_\Gamma \subset \Gamma$.
  By construction, the canonical projection of~$\widetilde c$
  to~$\R \otimes_{\R \Lambda} C_*(\Gamma)$ is a cycle. 

  \emph{Filling step.} Let $k \in \N$. Then $\partial(F_k \cdot
  \widetilde c) \in C_{n-1}(\Gamma)$ is a cycle and the coned off
  chain
  \[ \widetilde b_k := s\bigl(\partial (F_k \cdot \widetilde c)\bigr) 
     \in C_n(\Gamma),
  \]
  where $s$ is given by~$(\eta_1,\dots, \eta_n) \mapsto (1,\eta_1,\dots,\eta_n)$,     
  satisfies
  \[ \partial \widetilde b_k = \partial (F_k \cdot \widetilde c) 
     \quad\text{and}\quad
     |\widetilde b_k|_1 \leq \bigl|\partial(F_k \cdot \widetilde c)\bigr|_1
  \]
	(because of $\partial \circ s = \id - s\circ \partial$).
  The same argument as in the lifting lemma (Lemma~\ref{lem:lifting}) shows
  that
  \[ \lim_{k \rightarrow \infty} \frac1{|F_k|} \cdot |\widetilde b_k|_1
     = \lim_{k \rightarrow \infty} \frac1{|F_k|}
       \cdot \bigl|\partial (F_k \cdot \widetilde c)\bigr|_1 
     = 0.
  \]
  By construction, $\widetilde b_k - F_k \cdot \widetilde c_k \in C_n(\Gamma)$
  is a cycle, whence null-homologous (the chain complex~$C_*(\Gamma)$ is
  contractible). 
  
  \emph{Quotient step.} Therefore, the chain
  \[ c_k := \frac1{|F_k|} \cdot \text{(canonical projection of~$\widetilde b_k$)}
     \in C_n(\Gamma;\R)
  \]
  is a cycle with~$[c_k] = 1/|F_k| \cdot |F_k| \cdot [c] = [c]$
  and~$|c_k|_1 \leq 1 / |F_k| \cdot |\widetilde b_k|_1$.  Hence,
  $\lim_{k \rightarrow \infty} |c_k|_1 = 0$ and so $\|[c]\|_1= 0$.
\end{proof}

\begin{prop}\label{prop:UBCcompletion}
  Let $C_*$ be a normed $\R$-chain complex, let
  $\overline C_*$ be its completion, and let $n \in \N$. Furthermore,
  we assume that the induced semi-norm~$\|\cdot\|$ on~$H_n(C_*)$ is
  trivial and that $C_*$ satisfies \UBC n. Then the map
  \[ H_n(i) \colon H_n(C_*) \longrightarrow H_n(\overline C_*)
  \]
  induced by the inclusion~$C_* \hookrightarrow \overline C_*$ is trivial.
  More precisely: There exists a constant~$K \in \R_{>0}$ with the following
  property: For every cycle~$c \in C_n$ there exists a chain~$b \in \overline C_{n+1}$
  with
  \[ \partial b = c
  \quad
  \text{and}
  \quad
  |b|_1 \leq K \cdot |c|_1.
  \]
\end{prop}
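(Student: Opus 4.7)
The strategy is to iteratively reduce the norm of representatives of the cycle class within~$C_n$ using the triviality of the semi-norm on~$H_n(C_*)$, fill each of the successive differences inside~$C_{n+1}$ via the uniform boundary condition, and then recover a filling of~$c$ in the completion~$\overline C_{n+1}$ as a telescoping series.

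First I would set $c_0 := c$ and construct inductively a sequence of cycles~$(c_k)_{k \in \N}$ in~$C_n$ satisfying $[c_k] = [c] \in H_n(C_*)$ and $|c_k|_1 \leq |c|_1/2^k$. Such representatives exist because $\|[c_{k-1}]\| = 0$ by hypothesis, so the infimum in the definition of the induced semi-norm can be approached arbitrarily closely within the homology class. The chain $c_{k-1} - c_k$ is then a null-homologous cycle in~$C_n$, and \UBC n provides a constant $K \in \R_{>0}$ (depending only on~$C_*$ and $n$) together with a chain~$b_{k-1} \in C_{n+1}$ with $\partial b_{k-1} = c_{k-1} - c_k$ and
\[ |b_{k-1}|_1 \leq K \cdot |c_{k-1} - c_k|_1 \leq K \cdot \bigl( |c|_1/2^{k-1} + |c|_1/2^k \bigr) = 3K \cdot |c|_1 / 2^k.
\]

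The resulting estimate $\sum_{k \geq 1} |b_{k-1}|_1 \leq 3K \cdot |c|_1$ shows that the series $b := \sum_{k=1}^\infty b_{k-1}$ is absolutely convergent in~$\overline C_{n+1}$ and satisfies $|b|_1 \leq 3K \cdot |c|_1$. The partial sums telescope to $\sum_{k=1}^N \partial b_{k-1} = c - c_N$, and since $|c_N|_1 \to 0$ together with the fact that~$\partial$ extends continuously to the completion, one concludes $\partial b = c$ in~$\overline C_n$. This yields the statement with uniform constant~$3K$.

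The only subtle point is justifying that~$\partial$ passes to a bounded map~$\overline C_{n+1} \to \overline C_n$; this is immediate from the definition of a normed chain complex, whose boundary operators are bounded and therefore extend uniquely to the completions. The rest of the argument is a routine geometric series estimate, so I do not expect any genuine obstacle.
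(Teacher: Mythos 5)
Your proposal is correct and follows essentially the same strategy as the paper's proof: produce ever-smaller representatives of~$[c]$ using triviality of the semi-norm, fill the telescoping differences via \UBC n, and sum the fillings in the completion. The only difference is minor bookkeeping (you set $c_0 := c$, obtaining the constant~$3K$; the paper's write-up has a small indexing slip with $c_0 := 0$ and reports~$4K$, but the underlying argument is identical).
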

\begin{proof}
  Let $K \in \R_{>0}$ be an \UBC n-constant for~$C_*$  
  and let $c \in C_n$ be a cycle. By hypothesis, $\|[c]\| = 0$. Hence,
  there is a sequence~$(c_k)_{k \in \N}$ of cycles in~$C_n$ such that
  \[ \partial c_k = 0,
     \qquad
     |c_k| \leq \frac 1 {2^k} \cdot |c|,
     \qquad
     [c_k] = [c] \in H_n(C_*)
  \]
  holds for all~$k \in \N$; moreover, we will take~$c_0 := 0$.

  In view of \UBC n, there exists a sequence~$(b_k)_{k \in \N}$ in~$C_{n+1}$
  such that for all~$k \in \N$ we have
  \[ \partial b_k = c_{k+1} - c_k
     \quad\text{and}
     \quad
     |b_k| \leq K \cdot |c_{k+1} - c_k| \leq K \cdot \frac1{2^{k-1}} \cdot |c|.
  \]
  Then $b := \sum_{k=0}^\infty b_k$ is  a well-defined chain in~$\overline C_{n+1}$
  and one calculates (using continuity of the boundary operator and
  absolute convergence of all involved series)
  \[ \partial b = \sum_{k=0}^\infty \partial b_k = \sum_{k = 0}^\infty (c_{k+1} - c_k) = c_0 = c
  \]
  as well as
  \[ |b| \leq \sum_{k=0}^\infty |b_k| \leq K \cdot |c| \cdot \sum_{k=0}^\infty \frac1{2^{k-1}}
         \leq 4 \cdot K \cdot |c|.
  \]
  Therefore, the constant $K/4$ has the desired property.       
\end{proof}

\begin{proof}[Proof of Theorem~\ref{thm:l1homology}]
  Because $\Gamma$ is amenable, the chain
  complex~$C_*(\Gamma;\R)$ satisfies the uniform boundary condition in
  each degree (the proof of Proposition~\ref{mainprop:Q} easily
  adapts to the group case and $\R$-coefficients). Let $n\in \N_{>0}$ 
  and
  let $K' \in \R_{>0}$ be an \UBC{(n-1)}-constant
  for~$C_*(\Gamma;\R)$.
  
  We now consider a cycle~$c \in C^{\ell^1}_n(\Gamma;\R)$; 
  the goal is to find an $\ell^1$-chain~$b$ with~$\partial b = c$. 
  As a
  first step, we show that $c$ can be decomposed into an $\ell^1$-sum
  \[ c = \sum_{k=0}^\infty c_k
  \]
  of ordinary cycles~$c_k \in C_n(\Gamma;\R)$: 
  By definition of the $\ell^1$-norm, there is a sequence~$(z_k)_{k \in \N}$
  in~$C_n(\Gamma;\R)$ with
  \[ c = \sum_{k=0}^\infty z_k 
     \quad\text{and}\quad
     |c|_1 \leq \sum_{k=0}^\infty |z_k|_1.
  \]
  For~$k \in \N$ we consider the partial sum
  $s_k := \sum_{j=0}^k z_j \in C_n(\Gamma;\R).
  $ 
  Because $c$ is a cycle, $\lim_{k\rightarrow \infty} |\partial s_k|_1
  = |\partial c|_1 = 0$. Hence, by regrouping our sequence~$(z_k)_{k
    \in \N}$, we may assume without loss of generality that the
  sequences~$(|\partial s_k|_1)_{k \in \N}$ and $(z_k)_{k \in \N}$
  both are~$\ell^1$. By \UBC {(n-1)}, there exists a sequence~$(w_k)_{k
    \in \N}$ in $C_n(\Gamma;\R)$ such that
  \[ \partial w_k = \partial s_k
     \quad\text{and}\quad
     |w_k|_1 \leq K' \cdot |\partial s_k|_1
  \]
  holds for all~$k \in \N$. For $k \in \N$ we set
  $ c_k := z_k - w_k + w_{k-1} 
  $ 
  (where $w_{-1} := 0$ and $s_{-1} := 0$). Then $\partial c_k = 0$ and
  $ |c_k|_1 \leq |z_k|_1 + K' \cdot |\partial s_{k}|_1 + K' \cdot |\partial s_{k-1}|_1.
  $ 
  Hence, $c = \sum_{k=0}^\infty c_k$ is an $\ell^1$-sum of ordinary cycles in~$C_n(\Gamma;\R)$.

  We will now apply Proposition~\ref{prop:UBCcompletion} to~$C_*(\Gamma;\R)$.
  In view of Proposition~\ref{prop:groupnorm0}, the $\ell^1$-semi-norm
  on~$H_n(\Gamma;\R)$ is trivial. Moreover, $C_*(\Gamma;\R)$
  satisfies \UBC n (see above). So Proposition~\ref{prop:UBCcompletion} indeed
  can be applied. Hence, there is a~$K \in \R_{>0}$ such that
  for each~$k \in \N$ there exists a~$b_k \in C^{\ell^1}_{n+1}(\Gamma;\R)$ with
  \[ \partial b_k = c_k
     \quad\text{and}\quad
     |b_k|_1 \leq K \cdot |c_k|_1.
  \]
  Therefore,
  \[ b := \sum_{k=0}^\infty b_k \in C^{\ell^1}_{n+1}(\Gamma;\R)
  \]
  is a well-defined $\ell^1$-chain that satisfies
  $\partial b = \sum_{k = 0}^\infty \partial b_k
  = \sum_{k = 0}^\infty c_k = c.
  $
\end{proof}



\medskip
\vfill

\noindent
\emph{Daniel Fauser}\\
\emph{Clara L\"oh}\\[.5em]
  {\small
  \begin{tabular}{@{\qquad}l}
    Fakult\"at f\"ur Mathematik, 
    Universit\"at Regensburg, 
    93040 Regensburg\\
    \textsf{daniel.fauser@mathematik.uni-r.de},  
    \textsf{http://www.mathematik.uni-r.de/fauser}\\
    \textsf{clara.loeh@mathematik.uni-r.de}, 
    \textsf{http://www.mathematik.uni-r.de/loeh}
  \end{tabular}}

\begin{thebibliography}{100}

  \bibitem{brooks}
    R.~Brooks.
    Some remarks on bounded cohomology. In
    \emph{Riemann surfaces and related topics: Proceedings of
      the 1978 Stony Brook Conference}, Ann.\ Math.\ Studies, 97,
    pp.~53--63, Princeton University Press, Princeton, 1981.
		
	\bibitem{brown}
		K.~S.~Brown.
		\emph{Cohomology of groups},
		Graduate Texts in Mathematics, 87, Springer, 1982.

  \bibitem{bbfipp}
    M.~Bucher, M.~Burger, R.~Frigerio, A.~Iozzi, C.~Pagliantini, M.B.~Pozzetti.
    Isometric embeddings in bounded cohomology,
    \emph{J.~Topol.\ Anal.}, 6(1), pp.~1--25, 2014. 
  
  \bibitem{conze}
    J.~P.~Conze.
    Entropie d'un groupe ab\`{e}lien de transformations, 
    \emph{Z.~Wahrscheinlichkeitstheorie und Verw.\ Gebiete}, 25(1), pp.~11--30, 1972.
		
  \bibitem{deningerschmidt}
    C.~Deninger, K.~Schmidt.
    Expansive algebraic actions of discrete residually finite amenable groups and their entropy,
    \emph{Ergodic Theory and Dynamical Systems}, 27(3), pp.~769--786, 2007.

  \bibitem{tomdieck}
    T.~tom~Dieck.
    \emph{Algebraic Topology}, EMS Textbooks in Mathematics, EMS, 2008.

  \bibitem{fauser}
    D.~Fauser.
    Integral foliated simplicial volume and $S^1$-actions,
    preprint, available at \textsf{arXiv:1704.08538 [math.GT]}, 
    2017.
    
  \bibitem{flps}
    R.~Frigerio, C.~L\"oh, C.~Pagliantini, R.~Sauer.
    Integral foliated simplicial volume of aspherical manifolds, 
    \emph{Israel J.\ Math.}, 216(2), pp.~707--751, 2016. 
  
  \bibitem{vbc} M.~Gromov.
    Volume and bounded cohomology, 
    \emph{Publ.\ Math.\ IHES}, 56, pp.~5--99, 1982.

  \bibitem{gromovmetric} M.~Gromov.
    \emph{Metric structures for Riemannian and non-Riemannian spaces}.
    With appendices by M.~Katz, P.~Pansu, and S.~Semmes, translated 
    by S.M.~Bates. Progress in Mathematics, 152, 
    Birkh\"auser, 1999.

  \bibitem{kuessner}
    T.~Kuessner.
    Multicomplexes, bounded cohomology and additivity of simplicial volume, 
    \emph{Bull.\ Korean Math.\ Soc.}, 52(6), 1855--1899, 2015.
    
  \bibitem{loehl1} C.~L\"oh.
    Isomorphisms in $\ell^1$-homology,
    \emph{M\"unster J.~Math.}, 1, pp.~237--266, 2008. 
    
  \bibitem{loehpagliantini}
    C.~L\"oh, C.~Pagliantini.
    Integral foliated simplicial volume of hyperbolic $3$-manifolds,
    \emph{Groups Geom.\ Dyn.}, 10(3), pp.~825--865, 2016. 

  \bibitem{loehrg}
    C.~L\"oh.
    Rank gradient vs.\ stable integral simplicial volume,
    \emph{Period.\ Math.\ Hung.}, 76(1), pp.~88--94, 2018. 
    
 \bibitem{lueckl2}
    W.~L\"uck.
    \emph{$L^2$-Invariants: Theory and Applications to Geometry and
      $K$-Theory}. Ergebnisse der Mathematik und ihrer Grenzgebiete,
    3.~Folge,~44, Springer, 2002.
    
  \bibitem{mm}
    S.~Matsumoto, S.~Morita.
    Bounded cohomology of certain groups of homeomorphisms, 
    \emph{Proc.\ Amer.\ Math.\ Soc.}, 94(3), pp.~539--544, 1985.

  \bibitem{ow}
    D.S.~Ornstein, B.~Weiss.
    Ergodic theory of amenable group actions.~I. The Rohlin lemma. 
    \emph{Bull.\ Amer.\ Math.\ Soc.}, 2(1), 161--164, 1980. 
    
  \bibitem{paterson}
    A.~Paterson.
    \emph{Amenability}, Mathematical Surveys and Monographs, 25, AMS, 1988.
		
  \bibitem{sauer}
    R.~Sauer.
    Amenable covers, volume and $L^2$-Betti numbers of aspherical manifolds,
    \emph{J.~reine angew.\ Math.}, 636, pp.~47--92, 2009.
    
  \bibitem{mschmidt}
    M.~Schmidt.
    \emph{$L^2$-Betti Numbers of ${\mathcal R}$-spaces and the Integral Foliated Simplicial Volume}, 
    PhD thesis, WWU M\"unster, 2005.\newline
    \textsf{http://nbn-resolving.de/urn:nbn:de:hbz:6-05699458563}
	
  \bibitem{weiss}
    B.~Weiss.
    Monotileable amenable groups,
    \emph{Trans. Amer. Math. Soc.}, 202, pp.~257--262, 2001.
    
  \bibitem{yano}
    K.~Yano.
    Gromov invariant and $S^1$-actions.
    \emph{J.~Fac.\ Sci.\ U.~Tokyo, Sec.~1A
    Math.}, 29(3), pp.~493--501, 1982.

    
\end{thebibliography}
\end{document}